\newtheorem{thm}{Theorem}
\newtheorem{cor}[thm]{Corollary}
\newtheorem{defi}[thm]{Definition}
\newtheorem{rem}[thm]{Remark}
\newtheorem{nota}[thm]{Notation}
\newtheorem{princ}[thm]{Principle}
\newtheorem{ack}[thm]{Acknowledgement}
\newtheorem*{tempo*}{Template}
\newtheorem{theorem}[thm]{Theorem}
\newtheorem{problem}[thm]{Problem}
\newtheorem{lemma}[thm]{Lemma}
\newtheorem{definition}[thm]{Definition}
\newtheorem{remark}[thm]{Remark}
\newcommand\be{\begin{equation}}
\newcommand\ee{\end{equation}} 
\def\bdefi{\begin{defi}\rm}
\def\edefi{\end{defi}}
\def\bnota{\begin{nota}\rm}
\def\enota{\end{nota}}
\def\blambda{\pmb{\lambda}}
\def\ZFC{\textup{\textsf{ZFC}}}
\def\({\textup{(}}
\def\){\textup{)}}
\def\bye{\end{document}}
\def\P{\textup{\textsf{P}}}
\def\N{{\mathbb  N}}
\def\Q{{\mathbb  Q}}
\def\R{{\mathbb  R}}
\def\SS{\textup{\textsf{S}}}
\def\di{\rightarrow}
\def\asa{\leftrightarrow}
\def\BV{\textup{\textsf{BV}}}
\def\eps{\varepsilon}
\def\usco{\textup{\textsf{usco}}}
\def\osc{\textup{\textsf{osc}}}
\def\CH{\textup{\textsf{CH}}}
\def\INT{\textup{\textsf{int}}}
\def\CK{\textup{\textsf{CK}}}
\newcommand{\F}{{\bf F}}
\newcommand{\Pa}{{\bf P}}
\newcommand{\suc}{\mathsf {suc}}
\newcommand{\pd}{\mathsf {pred}}
\newcommand{\case}{\mathsf {case}}
\newcommand{\Mo}{{\bf HC}}
\def\Fix{\textup{\textsf{Fix}}}
\newcommand{\Fixa}{{\rm Fix}}
\numberwithin{equation}{section}
\numberwithin{thm}{section}
\begin{document}
\title{On some computational properties of open sets}
\author{Dag Normann}
\address{Department of Mathematics, The University 
of Oslo, P.O. Box 1053, Blindern N-0316 Oslo, Norway}
\email{dnormann@math.uio.no}
\author{Sam Sanders}
\keywords{Open sets, Kleene computability, semi-continuity, Lebesgue measure}
\subjclass[2020]{Primary: 03B30, 03F35}

\address{Department of Philosophy II, RUB Bochum, Germany}
\email{sasander@me.com}

\begin{abstract}
Open sets are central to mathematics, especially analysis and topology, in ways few notions are.  
In most, if not all, computational approaches to mathematics, open sets are only studied indirectly via their `codes' or `representations'.
In this paper, we study how hard it is to compute, given an arbitrary open set of reals, the most common representation, i.e.\ a countable set of open intervals.  
We work in Kleene's higher-order computability theory, in particular its equivalent lambda calculus formulation due to Platek.
%which was historically based on the S1-S9 schemes and which now has an intuitive lambda calculus formulation due to the authors.  
We establish many computational equivalences between on one hand the `structure' functional that converts open sets to the aforementioned representation, and on the other hand functionals 
arising from mainstream mathematics, like basic properties of semi-continuous functions, the Urysohn lemma, and the Tietze extension theorem. 
We also compare these functionals to known operations on regulated and bounded variation functions, and the Lebesgue measure restricted to closed sets.  We obtain a number of natural computational equivalences for the latter involving theorems from mainstream mathematics.  
\end{abstract}
%
%\setcounter{page}{0}
%\tableofcontents
%\thispagestyle{empty}
%\newpage
%%
\maketitle              % typeset the header of the contribution

\section{Introduction}
\subsection{Motivation and overview}\label{intro}
First of all, \emph{computability theory} is a discipline in the intersection of theoretical computer science and mathematical logic where the fundamental question is as follows:
\begin{center}
\emph{given two mathematical objects $X $ and $ Y$, is $X$ computable from $Y$ {in principle}?}
\end{center} 
If $X $ and $Y$ are real numbers, Turing's `machine' model (\cite{tur37}) is the standard approach, i.e.\ `computation' is interpreted in the sense of Turing machines.  
To formalise computation involving (total) abstract objects, like functions on the real numbers or well-orderings of the reals, Kleene introduced his S1-S9 computation schemes in \cites{kleeneS1S9}.  
In Section~\ref{lambdaz}, we introduce a version\footnote{The authors have previously introduced a lambda calculus in \cite{dagsamXIII} similar to that in Section~\ref{lambdaz}.  Unfortunately, the calculus from \cite{dagsamXIII} contains a  technical error, which was recently discovered by John Longley.  This error does not affect the computability-theoretic results in \cite{dagsamXIII}, but does require fixing.  We discuss the exact nature of this fix in Section \ref{lambdaz} in some detail.}  of the lambda calculus involving fixed point operators that exactly captures S1-S9 and accommodates partial objects, 
with a brief introduction in Section \ref{prelim}.   
Henceforth, any reference to computability is to be understood in Kleene's framework and (if relevant) the extension to partial objects from Section \ref{lambdaz}.
For those not intimate with S1-S9, we hasten to add that many of our positive results, i.e.\ of the form \emph{$X$ is computable from $Y$}, are actually of the form \emph{$X$ is computable from $Y$ via a term of G\"odel's $T$ of low complexity}. However, these are only examples and we have no structural results at this point.  
%We believe there to be interesting sub-clusters and look forward to investigating this topic.  

\smallskip

Secondly, the notion of open (and closed) set is central to mathematics, especially topology and analysis, in ways few mathematical definitions are.  
Historically, the concept of open set dates back to Baire's doctoral thesis (\cite{beren2}), while Dedekind already studied such notions twenty years earlier; the associated paper \cite{didicol} was only published much later (\cite{moorethanudeserve}).
Now, for various reasons, the computational study of open sets usually takes place indirectly, namely via \emph{representations} or \emph{codes} for open sets, as discussed in detail in Remark \ref{XzX}.
In this light, it is a natural question how hard it is to compute such representations in terms of open sets.  
 In this paper, we study this question for the well-known representation of open sets of reals as countable unions of open intervals.  We identify a large number of operations that have the same computational complexity and compare the latter to known operations from \cite{dagsamXII, dagsamXIII} stemming from mainstream mathematics, including the Lebesgue measure restricted to closed sets.  We discuss some open problems related the previous in Section \ref{hopen}.  

\smallskip

Thirdly, in more detail, the following functional constitutes the central object of study in this paper.  
We note that open and closed sets are given by characteristic functions (see Section \ref{cdef}), well-known from e.g.\ probability theory.
\bdefi\label{bas}
The \emph{$\Omega_{C}$-functional} is defined exactly when the input $X\subset 2^\N$ is a closed set, in which case $\Omega_{C}(X)=0$ if $X=\emptyset$ and $\Omega_{C}(X)=1$ if $X\ne \emptyset$.
%The \emph{$\Omega_{C}$-functional} is defined exactly when the input $X\subset [0,1]$ is a closed set, in which case $\Omega_{C}(X)=0$ if $X=\emptyset$ and $\Omega_{C}(X)=1$ if $X\ne \emptyset$.
\edefi
We have shown in \cite{dagsamVII} that $\Omega_{C}$ is \emph{hard}\footnote{By e.g.\ \cite{dagsamVII}*{Theorem 6.6}, no type 2 functional can compute $\Omega_{C}$.  This includes the (type 2) functionals $\SS_{k}^{2}$ from Section \ref{lll} which decide $\Pi_{k}^{1}$-formulas.\label{grotevoet}} to compute.
In Section \ref{clust}, we shall introduce the \emph{$\Omega_{C}$-cluster}, a collection of functionals that are computationally equivalent (see Def.\ \ref{specs}) to $\Omega_{C}$, assuming Kleene's quantifier $\exists^{2}$ from Section \ref{lll}.  We identify a large number of elements of the $\Omega_{C}$-cluster, many stemming from mainstream mathematics, like properties of semi-continuous functions.  
The following list contains some representative members of the $\Omega_{C}$-cluster studied in Section \ref{clust}. 
\begin{itemize}
\item A partial functional $\Phi:(\R\di \R)\di \R$ such that for open $O\subset [0,1]$, $\Phi(O)$ is a code for $O$ (see \cite{simpson2}*{II.5.6} for the latter).
\item A partial functional $\Phi:(\R\di \R)\di \R$ such that for any upper semi-continuous $f:[0,1]\di \R$ and $p, q\in \Q\cap[0,1]$, $\Phi(f)=\sup_{x\in [p, q]}f(x)$. 
\item A partial functional $\Phi:(\R\di \R)\di \{0,1\}$ such that for any upper semi-continuous $f:[0,1]\di \R$, $\Phi(f)=0$ if and only $f$ is continuous on $[0,1]$.
\item Witnessing functionals for Urysohn lemma and Tietze extension theorem.
\end{itemize}
Interestingly, computing suprema or deciding continuity for \emph{arbitrary} functions requires Kleene's quantifier $\exists^{3}$.  
We obtain similar equivalences for the Lebesgue measure restricted to closed sets in Section \ref{LM}.  In this study, we shall always assume Kleene's $\exists^{2}$ from Section \ref{lll} to be given; 
we motivate this assumption by the observation that we mainly study \emph{discontinuous} functions on the reals, from which $\exists^{2}$ is readily obtained via \emph{Grilliot's trick} (\cite{grilling, kohlenbach2}). 

\smallskip

Fourth, we shall compare the properties of $\Omega_{C}$ to the $\Omega_{b}$-functional, where the latter was studied in \cite{dagsamXII, dagsamXIII}.
We note that we may replace $2^\N$ with $[0,1]$ or $\N^\N$ in Def.\ \ref{OK} without consequence.  At this moment, we do not know whether $\Omega_{C}$ and $\Omega_{b}$ are computationally equivalent, as reflected in Problem \ref{whatsyour} below.  
\bdefi\label{OK}
The $\Omega_{b}$-functional is defined exactly when the input $X\subset 2^\N$ has at most one element, in which case $\Omega_{b}(X)=0$ if $X=\emptyset$ and $\Omega_{b}(X)=1$ if $X\ne \emptyset$.
\edefi
Assuming Kleene's $\exists^{2}$ from Section \ref{lll}, the functional $\Omega_{b}$ is computationally equivalent to the functional $\Omega$ defined as follows (see \cite{dagsamXIII}*{Lemma 3.3.5})
\bdefi 
The functional $\Omega(X)$ is defined if $|X| \leq 1$ and outputs the element of $X$ when $|X| = 1$ and the constant $0^1$ when $X = \emptyset$.

%a finite sequence that includes all elements of $X$.
\edefi
We have previously studied the $\Omega_{b}$-cluster (called `$\Omega$-cluster' in \cite{dagsamXIII}), which is the collection of functionals that are computationally equivalent (see Def.\ \ref{specs}) to $\Omega_{b}$, assuming $\exists^{2}$.
A number of natural operations on bounded variation and regulated functions are in the $\Omega_{b}$-cluster.  
We shall exhibit notable differences and similarities between the $\Omega_{b}$- and $\Omega_{C}$-cluster.
One important similarity is that $\Omega_{C}$ is \emph{lame} by Theorem \ref{tame}, i.e.\ the combination $\Omega_{C}+\exists^{2}$ computes the same real numbers as Kleene's $\exists^{2}$.  
By Theorem \ref{hench}, there are even no total functionals `in between' $\Omega_{C}$ and $\Omega_{b}$ from the computational point of view.  
Nonetheless, we identify a number of natural \emph{partial} functionals in Section \ref{betwi} that \emph{are} intermediate in this way.
An important difference between $\Omega_{b}$ and $\Omega_{C}$ is that they witness basic properties of two different function classes, namely the bounded variation and semi-continuous functions, although there is overlap between the latter. 

\smallskip

On a conceptual note, $\Omega_C$ and $\Omega_{b}$ are examples of what we call \emph{a structure functional}, i.e.\ a functional that does not turn up as a result of some construction in mainstream mathematics, but which is nonetheless useful for calibrating the computational complexity of those that do, like the functionals associated to Urysohn's lemma and the Tietze extension theorem. 

\smallskip

Figure \ref{xxx} below shows some connections between $\Omega_C$, $\Omega_{b}$, and related functionals, like the Lebesgue measure $\blambda_{C}$ restricted to closed sets.  
Moreover, Baire and (weak) Cantor realisers are witnessing functionals for the Baire category theorem and the uncountability of $\R$, while a Cantor intersection functional is a witnessing functional for the Cantor intersection theorem (see Def.\ \ref{keffer} and \ref{keffer2}).  
All functionals are computable in $\exists^{3}$ and strictly weaker than the latter. 
\begin{figure}[H]
\begin{tikzpicture}
  \matrix (m) [matrix of math nodes,row sep=3em,column sep=4em,minimum width=2em]
  {
   ~ & \Omega_{C}& \textup{Baire realiser}\\  
   \Omega_{b} &  \blambda_{C}~& \begin{array}{c}\textup{Cantor intersection}\\\textup{functional}\end{array} \\  
~&  \textup{Cantor realiser} &\textup{weak Cantor realiser} \\  
   };
  \path[-stealth]
    (m-1-2) edge node [left] {} (m-2-1)
        (m-1-2) edge node [left] {} (m-1-3)
        (m-1-3) edge [bend right=20] (m-3-2)
      %  (m-1-3) edge node [left] {} (m-2-3)
                (m-1-2) edge node [left] {} (m-2-2)
        (m-2-3) edge node [strike out, draw, -] {} (m-1-2)
                         (m-1-2) edge [bend right=10] (m-2-3)
     %           (m-2-2) edge node [left] {} (m-2-3)
        (m-2-2) edge node [strike out, draw, -] {} (m-2-1)
        (m-2-2) edge node [left] {?} (m-3-2)
                (m-2-1) edge node [left] {} (m-3-2)
                 (m-3-2) edge [bend right=10] (m-3-3)
                 (m-3-3) edge node [strike out, draw, -] {} (m-3-2)
               (m-2-3) edge node [left] {} (m-3-3)
;
\end{tikzpicture}
%\begin{tikzpicture}OLD picture with incorrect/trivial L_{C} functional
%  \matrix (m) [matrix of math nodes,row sep=3em,column sep=4em,minimum width=2em]
%  {
%   ~ & \Omega_{C}& \textup{Baire realiser}\\  
%   \Omega_{b} &  \blambda_{C}~& L_{C}\\  
%~&  \textup{Cantor realiser} &\textup{weak Cantor realiser} \\  
%   };
%  \path[-stealth]
%    (m-1-2) edge node [left] {} (m-2-1)
%        (m-1-2) edge node [left] {} (m-1-3)
%        (m-1-3) edge node [left] {} (m-3-2)
%        (m-1-3) edge node [left] {} (m-2-3)
%                (m-1-2) edge node [left] {} (m-2-2)
%      %  (m-1-2) edge node [left] {} (m-2-3)
%                (m-2-2) edge node [left] {} (m-2-3)
%        (m-2-2) edge node [strike out, draw, -] {} (m-2-1)
%        (m-2-2) edge node [left] {?} (m-3-2)
%                (m-2-1) edge node [left] {} (m-3-2)
%                 (m-3-2) edge [bend right=10] (m-3-3)
%                 (m-3-3) edge node [strike out, draw, -] {} (m-3-2)
%                (m-2-3) edge node [left] {} (m-3-3);
%\end{tikzpicture}
\caption{Some relations among our functionals}
\label{xxx}
\end{figure}
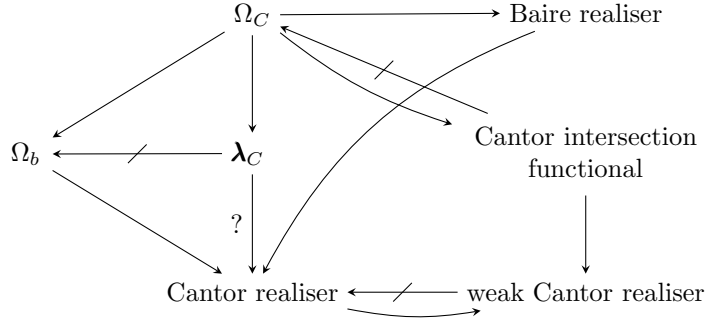
Finally, open sets are studied indirectly via `codes' or `representations' in computational approaches to mathematics. 
The following historical remark provides an overview of some such computational approaches to open sets.
\begin{rem}[Open sets and representations]\label{XzX}\rm
A set is \emph{open} if it contains a neighbourhood around each of its points, and an open set can be represented as a \emph{countable} union of such neighbourhoods in separable spaces, a result going back to Cantor.    
The latter and similar representations are used in various `computational' approaches to mathematics, as follows. 

\smallskip

For instance, the neighbourhood around a point of an open set is often assumed to be given together with this point (see e.g.\ \cite{bish1}*{p.\ 69}).  This is captured by our representation R2 studied in \cite{dagsamVII}*{\S7} and below.
Alternatively, open sets are simply represented as countable unions -called `codes' in \cite{simpson2}*{II.5.6}, `names' in \cite{wierook}*{\S1.3.4}, `witnesses' in \cite{weverketoch}, and `presentations' in \cite{littlefef}- of basic open neighbourhoods.
A related notion is \emph{locatedness} which means the (continuous) distance function $d(a, A):=\inf_{b\in A}d(a, b)$ exists for the set $A\subset \R$ (see \cite{bish1}*{p.\ 82}, \cite{withgusto}, or \cite{twiertrots}*{p.\ 258}), and numerous sufficient conditions are known (\cite{withgusto}).    

\smallskip

The representation of open sets as countable unions has the advantage that one can (effectively) switch between codes for open sets and codes for continuous functions (see e.g.\ \cite{simpson2}*{II.7.1}).  
As shown in \cite{dagsamVII, kohlenbach4, dagsamXIV}, computing a code for a continuous function, say on Cantor or Baire space or the unit interval, is \emph{rather easy}, while computing a code for an open set is \emph{rather hard}, in the sense of Footnote~\ref{grotevoet}.  
\end{rem}

\subsection{Preliminaries and definitions}\label{kelim}
We briefly introduce Kleene's \emph{higher-order computability theory} in Section~\ref{prelim}.
We introduce some essential axioms (Section~\ref{lll}) and definitions (Section~\ref{cdef}).  A full introduction may be found in e.g.\ \cite{dagsamX}*{\S2} or \cite{longmann}.

\smallskip

Since Kleene's computability theory borrows heavily from type theory, we shall often use common notations from the latter; for instance, the natural numbers are type $0$ objects, denoted $n^{0}$ or $n\in \N$.  
Similarly, elements of Baire space are type $1$ objects, denoted $f\in \N^{\N}$ or $f^{1}$.  Mappings from Baire space $\N^{\N}$ to $\N$ are denoted $Y:\N^{\N}\di \N$ or $Y^{2}$. 
An overview of such notations can be found in e.g.\ \cite{longmann, dagsamXIII}. 

%In Section \ref{crux}, we motivate our choice of definitions, Definition \ref{openset} in particular.  
\subsubsection{Kleene's computability theory}\label{prelim}
Our main results are in computability theory and we make our notion of `computability' precise as follows.  
\begin{enumerate}
\item[(I)] We adopt $\ZFC$, i.e.\ Zermelo-Fraenkel set theory with the Axiom of Choice, as the official metatheory for all results, unless explicitly stated otherwise.
\item[(II)] We adopt Kleene's notion of \emph{higher-order computation} as given by his nine clauses S1-S9 (see \cite{longmann}*{Ch.\ 5} or \cite{kleeneS1S9}) as our official notion of `computable' involving total objects. \footnote{We stress that Kleene's S1-S9 schemes make perfect sense for partial functionals of \emph{pure} type.  Since we may view the total functionals as a substructure of the partial ones, no extra computational strength, for total inputs, is added when extending the interpretation of Kleene's S1-S9 schemes to partial functionals.}
\end{enumerate}
%Similar to \cites{dagsam,dagsamII, dagsamIII, dagsamV, dagsamVI, dagsamVII}, one main aim of this paper is the study of functionals of type 3 that are \emph{natural} from the perspective of mathematical practise. 
We mention that S1-S8 are rather basic and merely introduce a kind of higher-order primitive recursion with higher-order parameters. 
%\marginpar{\footnotesize{Better: a kind of primitive recursion with higher-order parameters. (Not to confuse it with T)}}  
The real power comes from S9, which essentially hard-codes the \emph{recursion theorem} for S1-S9-computability in an ad hoc way.  
By contrast, the recursion theorem for Turing machines is derived from first principles in \cite{zweer}.

\medskip

On a historical note, it is part of the folklore of computability theory that many have tried (and failed) to formulate models of computation for objects of all finite types and in which one derives the recursion theorem in a natural way.  For this reason, Kleene ultimately introduced S1-S9, which 
were initially criticised for their aforementioned ad hoc nature, but eventually received general acceptance.  
Now, the authors have introduced a new computational model based on the lambda calculus in \cite{dagsamXIII} with the following properties:
\begin{itemize}
\item S1-S8 is included while the `ad hoc' scheme S9 is replaced by more natural (least) fixed point operators,
\item the new model exactly captures S1-S9 computability for total objects,
\item the new model accommodates `computing with partial objects',
\item the new model is more modular than S1-S9 in that sub-models are readily obtained by leaving out certain fixed point operators.
\end{itemize}
As noted in Section \ref{intro}, the lambda calculus from \cite{dagsamXIII} contains a  technical error, which was recently discovered by John Longley.  
We discuss a fix in Section~\ref{lambdaz}. This involves essentially replacing the calculus from \cite{dagsamXIII} with one essentially from Platek \cite{Pla}   and recovering the computation tree characterisation from \cite{dagsamXIII} as Theorem \ref{thm5.7}.

% including an introduction to our new (fixed) lambda calculus.  

\smallskip

Next, we mention the distinction between `normal' and `non-normal' functionals  based on the following definition from \cite{longmann}*{\S5.4}. 
We only make use of $\exists^{n}$ for $n=2,3$, as defined in Section \ref{lll}.
%\marginpar{\footnotesize{A defintion should not be in \\emph-mode like a lemma or theorem?}}
%Correct observation. This was an odd behaviour of the defi environment, 
\bdefi\label{norma}
For $n\geq 2$, a functional of type $n$ is called \emph{normal} if it computes Kleene's quantifier $\exists^{n}$ following S1-S9, and \emph{non-normal} otherwise.  
\edefi
\noindent
It is a historical fact that higher-order computability theory, based on Kleene's S1-S9, has focused primarily on the world of \emph{normal} functionals; this observation can be found \cite{longmann}*{\S5.4} and can be explained by the (then) relative scarcity of interesting non-normal functionals, like the \emph{fan functional}, originally due to Kreisel (see \cite{dagtait} for historical details).  
The authors have recently identified interesting \emph{non-normal} functionals, namely those that compute the objects claimed to exist by:
\begin{itemize}
\item covering theorems due to Heine-Borel, Vitali, and Lindel\"of (\cites{dagsamV}),
\item the Baire category theorem (\cite{dagsamVII, samcsl23}),
\item local-global principles like \emph{Pincherle's theorem} (\cite{dagsamV}),
\item weak fragments of the Axiom of (countable) Choice (\cite{dagsamIX}),
\item the Jordan decomposition theorem and related results (\cites{dagsamXII, dagsamXIII}),
\item the uncountability of $\R$ (\cites{dagsamX, dagsamXI}).
\end{itemize}
This paper continues the study of non-normal functionals that originate from basic properties of closed sets and semi-continuous functions.  

\smallskip

Finally, the first example of a non-computable non-normal functional, Kreisel's (aka Tait's) fan functional (see \cite{dagtait}), is rather tame: it is computable in $\exists^{2}$. 
By contrast, the functionals based on the previous list, including the operation \eqref{ting} from Section~\ref{intro}, are computable in $\exists^{3}$ but not computable in any $\SS_{k}^{2}$, where the latter decides $\Pi_{k}^{1}$-formulas (see Section \ref{lll} for details).

\subsubsection{Some comprehension functionals}\label{lll}
In Turing-style computability theory, computational hardness is measured in terms of where the oracle set fits in the well-known comprehension hierarchy.  
For this reason, we introduce some functionals related to \emph{higher-order comprehension} in this section.
We are mostly dealing with \emph{conventional} comprehension here, i.e.\ only parameters over $\N$ and $\N^{\N}$ are allowed in formula classes like $\Pi_{k}^{1}$ and $\Sigma_{k}^{1}$.  

\medskip
\noindent
First of all, \emph{Kleene's quantifier $\exists^{2}: \N^{\N}\di \{0,1\} $} is the unique functional satisfying: 
\be\label{muk}
(\forall f^{1})\big[(\exists n)(f(n)=0) \asa \exists^{2}(f)=0    \big]. 
\ee
Clearly, $\exists^{2}$ is discontinuous at $f=11\dots$ in the usual epsilon-delta sense. 
In fact, given a discontinuous function on $\N^{\N}$ or $\R$, \emph{Grilliot's trick} computes $\exists^{2}$ from the former, via a rather low-level term of G\"odel's $T$ (see \cite{kohlenbach2}*{\S3}).
%$\exists^{2}$ is (computationally) equivalent to the existence of $F:\R\di\R$ such that $F(x)=1$ if $x>_{\R}0$, and $0$ otherwise via Grilliot's trick (see \cite{kohlenbach2}*{\S3}).
Moreover, $\exists^{2}$ computes \emph{Feferman's $\mu^{2}$} defined for any $f^{1}$ as follows:
\be\label{muk2}
\mu(f):=
\begin{cases}
n & \textup{if $n^{0}$ is the least natural number such that $f(n)=0$}\\
0 & \textup{if there are no $m^{0}$ such that $f(m)=0$}
\end{cases}.
\ee
Hilbert and Bernays formalise considerable swaths of mathematics using only $\mu^{2}$ (with that name) in \cite{hillebilly2}*{Supplement IV}.
%$\RCAo$ and $\ACAo\equiv\RCAo+(\exists^{2})$ proves the same sentences as $\ACA_{0}$ by \cite{hunterphd}*{Theorem~2.5}. 

\medskip
\noindent
Secondly, the \emph{Suslin functional} $\SS^{2}:\N^{\N}\di \{0,1\}$ (see \cites{kohlenbach2, avi2}) is the unique functional satisfying the following, for any $f^{1}$:
\be\label{muk3}
  (\exists g^{1})(\forall n^{0})(f(\overline{g}n)=0)\asa \SS(f)=0.
\ee
By definition, the Suslin functional $\SS^{2}$ can decide whether a $\Sigma_{1}^{1}$-formula as in the left-hand side of \eqref{muk3} is true or false.   
We similarly define the functional $\SS_{k}^{2}$ which decides the truth or falsity of $\Sigma_{k}^{1}$-formulas, given in their Kleene normal form (see e.g.\ \cite{simpson2}*{IV.1.4}).
The Feferman-Sieg operators $\nu_{n}$ from \cite{boekskeopendoen}*{p.\ 129} are essentially $\SS_{n}^{2}$ strengthened to return a witness (if existent) to the $\Sigma_{n}^{1}$-formula at hand.  %  if it exists. 

\medskip

\noindent
Thirdly,  \emph{Kleene's quantifier $\exists^{3}: (\N^{\N}\di \N)\di \{0,1\} $} is the functional satisfying: 
\be\label{muk4}
(\forall Y^{2})\big[  (\exists f^{1})(Y(f)=0)\asa \exists^{3}(Y)=0  \big].
\ee
Hilbert and Bernays introduce functionals in e.g.\ \cite{hillebilly2}*{Supplement IV, p.\ 479} that readily compute $\exists^{3}$.  % which also provides a witness like the aforementioned $\nu_{n}$ does.  

\medskip

In conclusion, the functionals in Figure \ref{xxx} from Section \ref{intro} are computable in $\exists^{3}$ but not in any type 2 functional, which includes $\SS_{k}^{2}$ from Section \ref{lll} (see \cites{dagsamXII, dagsamXI, dagsamVII} for proofs).
Many non-normal functionals exhibit the same `computational hardness' and we merely view this as support for the development of a separate scale for classifying non-normal functionals.    

\subsubsection{Some definitions}\label{cdef}
We introduce some definitions needed in the below, mostly stemming from mainstream mathematics.
We note that subsets of $\R$ are given by their characteristic functions (Definition \ref{char}), where the latter are common in measure and probability theory.
%In this paper, `continuity' refers to the usual `epsilon-delta' definition, well-known from the literature. 

\medskip
\noindent
First of all, we make use the usual definition of (open) set, where $B(x, r)$ is the open ball with radius $r>0$ centred at $x\in \R$.
Note that `RM' stands for `Reverse Mathematics' in the final items. 
\bdefi[Set]\label{char}~
\begin{itemize}
\item Subsets $A$ of $ \R$ are given by their characteristic function $F_{A}:\R\di \{0,1\}$, i.e.\ we write $x\in A$ for $ F_{A}(x)=1$ for all $x\in \R$.
\item We write `$A\subset B$' if we have $F_{A}(x)\leq F_{B}(x)$ for all $x\in \R$. 
\item A subset $O\subset \R$ is \emph{open} in case $x\in O$ implies that there is $k\in \N$ such that $B(x, \frac{1}{2^{k}})\subset O$.
\item A subset $C\subset \R$ is \emph{closed} if the complement $\R\setminus C$ is open. 
\item A subset $O\subset \R$ is \emph{R2-open} in case there is $Y:\R\di \R$ such that 
$x\in O$ implies $Y(x)>0\wedge B(x, Y(x))\subset O$ and $Y(x)=0\di x\not\in O$, for any $x\in \R$.  
\item A subset $C\subset \R$ is \emph{R2-closed} if the complement $\R\setminus C$ is R2-open. 
\item A subset $O\subset \R$ is \emph{RM-open} if there are sequences $(a_{n})_{n\in \N}, (b_{n})_{n\in \N}$ of reals such that $O=\cup_{n\in \N}(a_{n}, b_{n})$.
\item A subset $C\subset \R$ is \emph{RM-closed} if the complement $\R\setminus C$ is RM-open.  
%\item A subset $A\subset \R$ is \emph{simply open} in case $A=O\cup N$ where $O$ is open and $N$ is nowhere dense, i.e.\ the closure of $N$ has empty interior. 
\end{itemize}
\edefi
\noindent
No computational data/additional representation is assumed in our definition of open set.  
As established in \cites{dagsamXII, dagsamXIII, samBIG}, one readily comes across closed sets in basic real analysis (Fourier series) that come with no additional representation. 
The function $Y:\R\di \R$ is called an \emph{R2-representation} for the R2-open set $O$ while the sequences $(a_{n})_{n\in \N}, (b_{n})_{n\in \N}$ are called an \emph{R4-representation} (see \cite{dagsamVII}) or \emph{RM-code} for the RM-open set $O$. 
%The reader will find more motivation for our definition of open set in Section~\ref{dichtbij}.

\smallskip

Secondly, the following sets are often crucial in proofs in real analysis. 
\bdefi
The sets $C_{f}$ and $D_{f}$ respectively gather the points where $f:\R\di \R$ is continuous and discontinuous.
\edefi
One problem with $C_{f}, D_{f}$ is that `$x\in C_{f}$' involves quantifiers over $\R$.  
In general, deciding whether a given $\R\di \R$-function is continuous at a given real, is as hard as $\exists^{3}$ from Section \ref{lll}.
For these reasons, the sets $C_{f}, D_{f}$ do exist in general, but are not computable in e.g.\ $\exists^{2}$.  For quasi-continuous and semi-continuous functions, these sets are definable in $\exists^{2}$ by \cite{dagsamXIII}*{\S2} or \cite{samBIG2}*{Theorem 2.4}, a fact we use often.

\smallskip

Thirdly, to avoid the problem sketched in the previous formula, one can additionally assume the existence of the oscillation function $\osc_{f}:\R\di \R$ as in Def.\ \ref{oscf}.  The formula `$x\in C_{f}$' is then equivalent to the \emph{arithmetical} formula $\osc_{f}(x)=_{\R}0$. 
\bdefi[Oscillation function]\label{oscf}
For any $f:\R\di \R$, the associated \emph{oscillation functions} are defined as follows: $\osc_{f}([a,b]):= \sup _{{x\in [a,b]}}f(x)-\inf _{{x\in [a,b]}}f(x)$ and $\osc_{f}(x):=\lim _{k \di \infty }\osc_{f}(B(x, \frac{1}{2^{k}}) ).$
\edefi
We note that Riemann, Hankel, and Ascoli already study the notion of oscillation in the context of Riemann integration (\cites{hankelwoot, rieal, ascoli1}).  

\smallskip

Fourth, we shall study the following notions, many of which are well-known and hark back to the days of Baire, Darboux, Hankel, and Volterra (\cites{beren,beren2,darb, volaarde2,hankelwoot,hankelijkheid}).  
\bdefi\label{flung} 
For $f:[0,1]\di \R$, we have the following definitions:
\begin{itemize}
\item $f$ is \emph{upper semi-continuous} at $x_{0}\in [0,1]$ if $f(x_{0})\geq_{\R}\lim\sup_{x\di x_{0}} f(x)$,
\item $f$ is \emph{lower semi-continuous} at $x_{0}\in [0,1]$ if $f(x_{0})\leq_{\R}\lim\inf_{x\di x_{0}} f(x)$,
\item $f$ is \emph{quasi-continuous} at $x_{0}\in [0, 1]$ if for $ \epsilon > 0$ and an open neighbourhood $U$ of $x_{0}$, 
there is a non-empty open ${ G\subset U}$ with $(\forall x\in G) (|f(x_{0})-f(x)|<\eps)$.
\item $f$ is \emph{cliquish} at $x_{0}\in [0, 1]$ if for $ \epsilon > 0$ and an open neighbourhood $U$ of $x_{0}$, 
there is a non-empty open ${ G\subset U}$ with $(\forall y, z\in G) (|f(y)-f(z)|<\eps)$.
\item $f$ is \emph{regulated} if for every $x_{0}$ in the domain, the `left' and `right' limit $f(x_{0}-)=\lim_{x\di x_{0}-}f(x)$ and $f(x_{0}+)=\lim_{x\di x_{0}+}f(x)$ exist.  
\item $f$ is \emph{Baire 0} if it is a continuous function. 
\item $f$ is \emph{Baire $n+1$} if it is the pointwise limit of a sequence of Baire $n$ functions.
\item $f$ is \emph{Baire 1$^{*}$} if\footnote{The notion of Baire 1$^{*}$ goes back to \cite{ellis} and equivalent definitions may be found in \cite{kerkje}.  
In particular,  Baire 1$^{*}$ is equivalent to the Jayne-Rogers notion of \emph{piecewise continuity} from \cite{JR}.} there is a sequence of closed sets $(C_{n})_{n\in \N}$ such that $[0,1]=\cup_{n\in \N}C_{n}$ and $f_{\upharpoonright C_{m}}$ is continuous for all $m\in \N$.
\item $f$ is \emph{continuous almost everywhere} if it is continuous at all $x\in [0,1]\setminus E$, where $E$ is a measure zero\footnote{A set $A\subset \R$ is \emph{measure zero} if for any $\eps>0$ there is a sequence of basic open intervals $(I_{n})_{n\in \N}$ such that $\cup_{n\in \N}I_{n}$ covers $A$ and has total length below $\eps$.  Note that this notion does not depend on (the existence of) the Lebesgue measure.} set.
\item $f$ is \emph{pointwise discontinuous} if for any $x\in [0,1]$ and $\eps>0$, there is $y\in [0,1]$ such that $f$ is continuous at $y$ and $|x-y|<\eps$ (Hankel, 1870, \cite{hankelwoot}).  % i.e.\ the continuity points are dense. 
\end{itemize}
\edefi
As to notations, a common abbreviation is `usco' and `lsco' for the first two items.  % while one often just writes `cadlag', i.e.\ without the accents.  
Moreover, if a function has a certain weak continuity property at all reals in $[0,1]$ (or its intended domain), we say that the function has that property.  
We shall generally study $\R\di \R$-functions but note that `usco' and `lsco' are also well-defined for $f:[0,1]\di \overline{\R}$ where $\overline{\R}=\R\cup\{+\infty, -\infty\}$ involves 
two special symbols that satisfy $(\forall x\in \R)(-\infty <_{\R} x <_{\R}+\infty )$ by fiat.     

\smallskip

We note that cliquish functions are exactly those functions that can be expressed as the sum of two quasi-continuous functions (\cite{quasibor2}).  
Nonetheless, these notions can behave fundamentally different (see e.g.\ \cite{dagsamXIII}*{\S2.8}).  

\smallskip

Fifth, the notion of \emph{bounded variation} (abbreviated $BV$) was first explicitly\footnote{Lakatos in \cite{laktose}*{p.\ 148} claims that Jordan did not invent or introduce the notion of bounded variation in \cite{jordel}, but rather discovered it in Dirichlet's 1829 paper \cite{didi3}.} introduced by Jordan around 1881 (\cite{jordel}) yielding a generalisation of Dirichlet's convergence theorems for Fourier series.  
Indeed, Dirichlet's convergence results are restricted to functions that are continuous except at a finite number of points, while functions of bounded variation can have (at most) countable many points of discontinuity, as already studied by Jordan, namely in \cite{jordel}*{p.\ 230}.
Nowadays, the \emph{total variation} of $f:[a, b]\di \R$ is defined as follows:
\be\label{tomb}\textstyle
V_{a}^{b}(f):=\sup_{a\leq x_{0}< \dots< x_{n}\leq b}\sum_{i=0}^{n-1} |f(x_{i})-f(x_{i+1})|.
\ee
If this quantity exists and is finite, one says that $f$ has bounded variation on $[a,b]$.
Now, the notion of bounded variation is defined in \cite{nieyo} \emph{without} mentioning the supremum in \eqref{tomb}; see also \cites{kreupel, briva, brima}.  
Hence, we shall distinguish between the following notions.  Jordan seems to use item \eqref{donp} of Definition~\ref{varvar} in \cite{jordel}*{p.\ 228-229}.
% providing further motivation for the functionals introduced in Definition \ref{JDR}.
\bdefi[Variations on variation]\label{varvar}
\begin{enumerate}  
\renewcommand{\theenumi}{\alph{enumi}}
\item The function $f:[a,b]\di \R$ \emph{has bounded variation} on $[a,b]$ if there is $k_{0}\in \N$ such that $k_{0}\geq \sum_{i=0}^{n-1} |f(x_{i})-f(x_{i+1})|$ 
for any partition $x_{0}=a <x_{1}< \dots< x_{n-1}<x_{n}=b  $.\label{donp}
\item The function $f:[a,b]\di \R$ \emph{has {a} variation} on $[a,b]$ if the supremum in \eqref{tomb} exists and is finite.\label{donp2}
\end{enumerate}
\edefi
\noindent
The fundamental theorem about $BV$-functions (see e.g.\  \cite{jordel}*{p.\ 229}) is as follows.
\begin{thm}[Jordan decomposition theorem]\label{drd}
A function $f : [0, 1] \di \R$ of bounded variation is the difference of two non-decreasing functions $g, h:[0,1]\di \R$.
\end{thm}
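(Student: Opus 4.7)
The plan is the standard one going back to Jordan himself: construct $g$ as the variation function and let $h$ be the difference $g-f$. Concretely, for each $x\in [0,1]$ define
\[
g(x):=V_{0}^{x}(f)=\sup\Big\{\textstyle\sum_{i=0}^{n-1}|f(x_{i})-f(x_{i+1})| : 0=x_{0}<x_{1}<\dots<x_{n}=x\Big\},
\]
with the convention $g(0)=0$, and then set $h(x):=g(x)-f(x)$. Since we work in $\ZFC$ and $f$ has bounded variation in the sense of Definition~\ref{varvar}\eqref{donp}, the set of partition sums is a non-empty subset of $[0,k_{0}]$, so $g(x)$ exists as a real number by the supremum principle. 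One immediately obtains $f=g-h$, so the entire proof reduces to showing that both $g$ and $h$ are non-decreasing on $[0,1]$.

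For the monotonicity of $g$, I would first establish the additivity identity
\[
V_{0}^{y}(f)=V_{0}^{x}(f)+V_{x}^{y}(f) \quad \text{for all } 0\le x\le y\le 1.
\]
The inequality $\ge$ comes from concatenating a partition of $[0,x]$ with one of $[x,y]$; the inequality $\le$ comes from taking an arbitrary partition of $[0,y]$, inserting the point $x$ (which can only increase the sum by the triangle inequality), and splitting the resulting sum at $x$. This additivity gives $g(y)-g(x)=V_{x}^{y}(f)\ge 0$, so $g$ is non-decreasing.

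For the monotonicity of $h$, observe that for $0\le x\le y\le 1$ the trivial two-point partition $\{x,y\}$ of $[x,y]$ yields
\[
f(y)-f(x)\le |f(y)-f(x)|\le V_{x}^{y}(f)=g(y)-g(x),
\]
and hence $h(y)-h(x)=\bigl(g(y)-g(x)\bigr)-\bigl(f(y)-f(x)\bigr)\ge 0$, as required. Combining both monotonicity statements with $f=g-h$ completes the proof.

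I do not expect any real obstacle: the only subtle point is the additivity identity, and inserting the point $x$ into an arbitrary partition of $[0,y]$ is a routine triangle-inequality argument. Note that the proof is thoroughly classical and uses the supremum principle in an essential way; it makes no attempt to compute $g$ or $h$ from $f$ effectively, which is exactly the phenomenon that the $\Omega_{b}$-cluster of the paper is designed to quantify.
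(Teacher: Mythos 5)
Your proof is correct: the variation-function decomposition $g(x)=V_{0}^{x}(f)$, $h=g-f$, together with the additivity of the variation and the two-point-partition estimate $|f(y)-f(x)|\leq V_{x}^{y}(f)$, is exactly the classical Jordan argument, and your handling of the existence of the supremum under Definition \ref{varvar}(a) is fine in $\ZFC$. Note that the paper itself gives no proof of Theorem \ref{drd}; it is quoted as a classical result with a citation to Jordan's original paper, so there is nothing in-paper to compare against, and your argument is the standard one that citation points to.
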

Theorem \ref{drd} has been studied extensively via second-order representations in e.g.\ \cites{groeneberg, kreupel, nieyo, verzengend}.
The same holds for constructive analysis by \cites{briva, varijo,brima, baathetniet}, involving different (but related) constructive enrichments.  
Now, arithmetical comprehension suffices to derive Theorem \ref{drd} for various kinds of second-order \emph{representations} of $BV$-functions in \cite{kreupel, nieyo}.
% i.e.\ finite iterations of the Turing jump suffice to compute the associated Jordan decomposition.  
By contrast, the results in \cite{dagsamXII,dagsamX, dagsamXI, dagsamXIII} show that the Jordan decomposition theorem is even `explosive': combining with the Suslin $\SS^{2}$ (see Section \ref{lll}), one derives the much stronger $\SS_{2}^{2}$.  
 
\smallskip

Finally, we have previously studied the following functionals in e.g.\ \cites{dagsamVII,dagsamXIII, dagsamXII}, also found in Figure \ref{xxx} in Section \ref{intro}.  
\bdefi[Some functionals]\label{keffer}~
\begin{itemize}
\item A \emph{Baire realiser} is defined for input $(O_{n})_{n\in \N}$ being a sequence of open and dense sets in $[0,1]$; the output is any $x\in \cap_{n\in \N}O_{n}$. 
\item A \emph{Cantor realiser} is defined for input $(A, Y)$ where $Y:\R\di \N$ is injective on $A\subset [0,1]$; the output is any real $x\in [0,1]\setminus A$. 
\item A \emph{\textbf{weak} Cantor realiser} is defined for input $(A, Y)$ where $Y:\R\di \N$ is injective on $A\subset [0,1]$ and satisfies $(\forall n\in \N)(\exists x\in A)(Y(x)=n)$; the output is any real $x\in [0,1]\setminus A$. 
%\item The functional $L_{C}$ is defined for input any closed and measure zero $C\subset [0,1]$; the output is any real $x\in [0,1]\setminus C$. 
\end{itemize}
\edefi

\section{Populating the $\Omega_{C}$-cluster}\label{clust}
\subsection{Introduction}
In this section, we introduce the $\Omega_{C}$-cluster and establish that many 
functionals originating from mainstream mathematics belong to this cluster.  For instance, many functionals witnessing basic properties of semi-continuous functions turn out to be members of the $\Omega_{C}$-cluster, as well as functionals witnessing the Urysohn lemma and Tietze extension theorem.

\smallskip

First of all, we recall the definition of $\Omega_{C}$ from Section \ref{intro}.
\bdefi\label{bass}
The \emph{$\Omega_{C}$-functional} is defined exactly when the input $X\subset 2^\N$ is a closed set, in which case $\Omega_{C}(X)=0$ if $X=\emptyset$ and $\Omega_{C}(X)=1$ if $X\ne \emptyset$.
\edefi
\noindent
The $\Omega_{C}$-cluster is now defined as the following equivalence class. %as those functionals computationally equivalent to $\Omega_{C}$ modulo $\exists^{2}$.
\bdefi\label{specs}
We say that \emph{the functional $\Phi^{3}$ belongs to the $\Omega_{C}$-cluster} in case 
\begin{itemize}
\item the combination $\Phi+\exists^{2}$ computes $\Omega_{C}$, and 
\item the combination $\Omega_{C}+\exists^{2}$ computes $\Phi$. 
\end{itemize}
We also say that \emph{the functionals $\Phi$ and $\Omega_{C}$ are computationally equivalent given $\exists^{2}$}.
\edefi
Secondly, to avoid complicated definitions and domain restrictions, we will sometimes abuse notation and make statements of the form
\begin{center}
\emph{any functional $\Psi$ satisfying a given specification $\textsf{\textup{(A)}}$ belongs to the $\Omega_{C}$-cluster.}
\end{center}
The centred statement means that for \textbf{any} functional $\Psi_{0}$ satisfying the given specification $\textsf{(A)}$, the combination $\Psi_{0}+\exists^{2}$ computes $\Omega_{C}$, while $\Omega_{C}+\exists^{2}$ computes \textbf{some} functional $\Psi_{1}$ satisfying the specification $\Psi_{1}$.  

\smallskip
\noindent
Finally, as to content, this section is structured as follows.
\begin{itemize}
\item In Section \ref{clute}, we identify a large number of functionals in the $\Omega_{C}$-cluster, grouped in the \emph{first and second cluster theorem}. 
These are all generally related to semi-continuity and properties of closed/compact sets.
\item In Section \ref{relo}, we identify some functionals in or related to the $\Omega_{C}$-cluster that deal with the following (in our opinion) `unexpected' topics:
\begin{itemize}
\item basic properties of \emph{arbitrary functions} on the reals (Section \ref{trini}),
\item a generalisation of moduli of continuity  (Section \ref{mocosec}), 
%\item the `gauge' definition of Baire 1 functions from \cite{leebaire} (Section \ref{ponk}), 
\item computational equivalences assuming $\SS^{2}$ (Section \ref{trinix}).
\end{itemize}
\end{itemize}
Regarding the second bullet point, it is known that many basic properties of arbitrary functions on the reals are only decidable given $\exists^{3}$.  
The crux is of course that we additionally assume an {oscillation function} to be given in Section \ref{trini}.  
The latter function is always upper semi-continuous, explaining the connection to $\Omega_{C}$.

\subsection{The cluster theorems}\label{clute}
We establish the first (Theorem \ref{ficlu}) and second (Theorem \ref{seclu}) cluster theorems, identifying a large number of members of the $\Omega_{C}$-cluster.  
Many of these members are based on mainstream results pertaining to semi-continuous functions and exhibit some robustness. 
We assume that the functionals $\Phi_{1}$-$\Phi_{7}$ in Theorem \ref{ficlu} are undefined outside of their (clearly specified) domain of definition to avoid complicated specifications.  
\begin{thm}[First cluster theorem]\label{ficlu}
The following belong to the $\Omega_{C}$-cluster.  
\begin{itemize}
\item The partial functional $\Phi_0$ that to a \textbf{compact} subset $X$ of $\N^\N$ outputs 0 if $X$ is empty and 1 otherwise.
\item The partial functional $\Phi_1$ that to a closed subset $X$ of $[0,1]$ outputs 0 if $X$ is empty and 1 otherwise.
\item The partial functional $\Phi_2$  that to an open set $O \subset [0,1]$ outputs the set of pairs $\langle p,q\rangle$ of rational numbers such that $(p,q) \subseteq O$.
\item The partial functional $\Phi_{2,c}$ that to an open set $O \subset [0,1]$ outputs the set of pairs $\langle p,q\rangle$ of rational numbers such that $[p,q] \subseteq O$.
\item The partial functional $\Phi_3$ that to usco $f:[0,1]\di \R$ outputs $\sup_{x\in [0,1]} f(x)$.\label{krem}
\item  Any partial functional $\Phi_4$ that to usco $f:[0,1]\di \R$ outputs $y\in [0,1]$ where $f$ attains its supremum.
\item Any selector $\Phi_5$ for compact $X\subset 2^\N$, i.e.\ $\Phi_5(X) \in 2^\N$ for all compact $X$, and $\Phi_5(X) \in X$ when $X \neq \emptyset$.
\item Any functional $\Phi_{5, c}$ that to closed $C\subset [0,1]$ and $k\in \N$, outputs distinct $x_{0}, \dots, x_{k}\in C$ if such there are, and $0$ otherwise.  
\item Any functional $\Phi_6$ that to a perfect subset of $[0,1]$ outputs its optimal RM-code, alternatively \(and equivalently\) decides if a set that is either perfect or empty, is empty or perfect.
\item The functional $\Phi_7$ that to a closed subset $X$ of $2^\N$  outputs its optimal code.
\end{itemize}
\end{thm}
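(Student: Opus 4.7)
The plan is to organise the ten items into a small number of equivalence hubs and then reduce the remaining items to a hub. The natural hub on one side is $\Omega_{C}$ itself (which is visibly a special case of $\Phi_{0}$ and $\Phi_{1}$), and the hub on the other side is $\Phi_{2}$, the ``open-set to RM-code'' functional, since once a code is available most of the remaining functionals become computable by a straightforward search using $\exists^{2}$. In both directions we freely use $\exists^{2}$, which gives quantification over $\N^{\N}$, so statements of the form ``there exists a rational $q$ such that \dots'' are decidable provided the matrix is.

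\textbf{Easy transfers and the hub $\Phi_{2}$.} First I would handle $\Phi_{0}$ and $\Phi_{1}$ by the standard homeomorphisms/embeddings between $2^{\N}$, $[0,1]$ and bounded closed subsets of $\N^{\N}$: a closed $X\subseteq 2^{\N}$ is empty iff its image under the Cantor embedding into $[0,1]$ is empty, and a compact $X\subseteq\N^{\N}$ is, via $\exists^{2}$ and a modulus of boundedness, reducible to a closed subset of $2^{\N}$. For $\Phi_{2}$, the crucial observation is that for an open $O\subseteq[0,1]$ and rationals $p<q$,
\[
(p,q)\subseteq O \;\Longleftrightarrow\; (\forall n\in\N)\bigl([p+\tfrac{1}{n},q-\tfrac{1}{n}]\cap(\,[0,1]\setminus O\,)=\emptyset\bigr),
\]
and the sets on the right are closed subsets of $[0,1]$. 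Hence $\Omega_{C}+\exists^{2}$ decides each instance and enumerates all qualifying pairs; the variant $\Phi_{2,c}$ is treated the same way with closed intervals. Conversely, given closed $X\subseteq 2^{\N}$ (embedded into $[0,1]$), $X=\emptyset$ iff $\langle p,q\rangle\in\Phi_{2}(O)$ for $O=[0,1]\setminus X$ and any rationals $p<0<1<q$ clipped to $[0,1]$.

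\textbf{Suprema, maxima, and selectors.} For $\Phi_{3}$, note that if $f:[0,1]\to\R$ is upper semi-continuous then $\{x:f(x)\geq r\}$ is closed for every $r$; using $\Omega_{C}$ to test emptiness and $\exists^{2}$ to bisect over rationals $r$, we compute $\sup_{[0,1]}f$. Conversely, the indicator $\chi_{X}$ of a closed set $X\subseteq[0,1]$ is usco, and $\sup\chi_{X}=1$ iff $X\neq\emptyset$, so $\Phi_{3}+\exists^{2}$ computes $\Omega_{C}$. For $\Phi_{4}$, once $M=\sup f$ is obtained by $\Phi_{3}$, the level set $\{x:f(x)\geq M\}$ is a nonempty closed subset of $[0,1]$, and one reduces to $\Phi_{5}$; the converse again uses $\chi_{X}$. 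The selector $\Phi_{5}$ is built by bisection on the tree $2^{<\N}$: at each node ask $\Omega_{C}$ whether $X\cap[\sigma\frown 0]$ is empty, branching into the nonempty subtree. $\Phi_{5,c}$ is obtained by iterating $\Phi_{5}$ after chopping off small neighbourhoods of already-produced points (using $\exists^{2}$ to avoid collisions). In the converse directions, applying the selector to $X$ and checking membership of the output in $X$ (via $\exists^{2}$) decides nonemptiness.

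\textbf{Optimal codes and the main obstacle.} For $\Phi_{7}$, the passage from a closed $X\subseteq 2^{\N}$ to its \emph{optimal} code amounts to producing the (canonically enumerated) minimal subtree $T_{X}\subseteq 2^{<\N}$ such that $X=[T_{X}]$: using $\Omega_{C}$, decide at each $\sigma\in 2^{<\N}$ whether $X\cap[\sigma]=\emptyset$ and retain exactly those $\sigma$ with nonempty intersection; conversely, from the optimal code the emptiness of $X$ is read off immediately. For $\Phi_{6}$, the complement of a perfect $P\subseteq[0,1]$ is an RM-open set, and one obtains an RM-code via $\Phi_{2}$ applied to $[0,1]\setminus P$; the harder half is deciding ``empty or perfect'', which requires detecting isolated points, i.e.\ asking whether certain closed sets $X\cap [\sigma]$ are singletons. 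The main obstacle, and the most delicate step of the entire theorem, is precisely this extraction of an \emph{optimal} (rather than arbitrary) code: one must coalesce overlapping rational intervals into the connected components of $O$, which needs $\exists^{2}$ to compare endpoints together with $\Omega_{C}$ to certify that candidate components contain no gaps. Once this is set up for $\Phi_{2}$, it cascades to $\Phi_{6}$ and $\Phi_{7}$, closing the circle of equivalences.
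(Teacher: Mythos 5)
Most of your reductions coincide with the paper's (closed level sets for usco $f$, bisection/selection via $\Omega_{C}$, emptiness tests for rational intervals to get $\Phi_2$, characteristic functions for the converses), but there is a genuine gap at $\Phi_6$. Membership in the $\Omega_{C}$-cluster requires two directions, and for $\Phi_6$ the nontrivial one is that $\Phi_6+\exists^2$ computes $\Omega_{C}$: since $\Phi_6$ is only defined on inputs promised to be perfect (or perfect-or-empty), deciding emptiness of an \emph{arbitrary} closed $X$ forces you to convert $X$ into such a set first. Your proposal never does this. The paper's trick is: use $\exists^2$ to check whether $X\cap\Q\neq\emptyset$ (if so, $X\neq\emptyset$); otherwise $X$ may be identified with a closed subset of $2^\N$, and via the bijection $2^\N\cong 2^\N\times 2^\N$ one forms $X\times 2^\N$, which is perfect if $X\neq\emptyset$ and empty if $X=\emptyset$, so the decision form of $\Phi_6$ yields $\Omega_{C}$. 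By contrast, the half you single out as hard --- deciding ``empty or perfect'' --- is trivial from $\Omega_{C}$, because the input is \emph{promised} to be one of the two, so no isolated-point detection is needed; and the ``optimal code'' extraction you flag as the main obstacle is not the crux either: for $\Phi_2$ and $\Phi_7$ the optimal code is simply the set of all rational intervals (resp.\ basic clopen sets) contained in the complement, each instance being an emptiness test, with no coalescing into components required.

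Two smaller slips are fixable but worth noting. For $\Phi_0$ you invoke ``a modulus of boundedness'' to push a compact $X\subset\N^\N$ into $2^\N$; no such modulus is part of the input, and it is not computable from $\exists^2$ plus the characteristic function of $X$. The paper instead uses a Turing-computable injection of $\N^\N$ into $2^\N$ with partial computable inverse: the image of a compact set is closed in $2^\N$ and its characteristic function is computable, so $\Omega_{C}$ applies directly. For the converse of $\Phi_2$, your test with rationals $p<0<1<q$ never succeeds under the literal definition, since $(p,q)\not\subseteq[0,1]\supseteq O$; sets such as $X=\{0\}$ also show that ``every rational subinterval of $[0,1]$ is listed'' does not imply $X=\emptyset$. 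Either check $0,1\in X$ directly with $\exists^2$ and then test all rational subintervals, or, as the paper does, use $\exists^2$ to check whether the enumerated intervals contain a finite subcover of $[0,1]$.
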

\begin{proof}
Zeroth, a closed subset of $2^\N$ is a compact subset of $\N^\N$, so $\Omega_{C}$ is a sub-function of $\Phi_0$ and thus computable in it. For the other direction we use that there is a Turing-computable injection from $\N^\N$ to $2^\N$ with a partial Turing-computable inverse. Using this, we readily compute $\Phi_0$ from $\Omega_{C}$

\smallskip
 
First of all, if $X \subseteq [0,1]$ is closed, then the set of binary expansions of the elements in $X$ is a compact subset of $2^\N$, so we can use $\Omega_C$ to decide if $X$ is non-empty or not. In this way, $\Omega_{C}$ computes $\Phi_{1}$ and the other direction is trivial.  Without loss of generality, we shall often write $\Omega_{C}$ for $\Phi_1$.

\smallskip

Secondly, if $O$ is an open subset of $[0,1]$ and $r < q$ are rational points in $[0,1]$, we can use $\Phi_1$ to decide if $[r,q] \subseteq O$ or not. Using $\exists^2$, this information yields $\Phi_2(O)$. So $\Phi_{1}$ computes $\Phi_{2}$, and by the same argument $\Phi_{2, c}$. Conversely, if $X \subseteq [0,1]$ is closed and $\Phi_2$ provides us with an RM-code for $[0,1] \setminus X$, then $\exists^2$ can check if this code contains a finite sub-covering of $[0,1]$ or not, 
i.e.\ if $X$ is empty or not. Thus, $\Phi_{2}$ computes $\Phi_{1}$, and so does $\Phi_{2, c}$. 

\smallskip

Thirdly, if $f:[0,1]\di \R$ is  usco, then for any $r\in \Q$ we have that $\{x \in [0,1] : f(x) \geq r\}$ is closed. Using this and $\exists^2$, $\Phi_1$ computes the Dedekind cut of $\sup f$, i.e.\ $\Phi_{1}$ computes $\Phi_{3}$.
If $X \subseteq 2^\N$ is compact, we consider $X$ as a closed subset of the (traditional) Cantor set. Then the characteristic function of $X$ is usco. Using $\Phi_3$ to find the sup of this function, we can decide if $X$ is empty or not, thus computing $\Omega_C(X)$, i.e.\ $\Phi_{3}$ and $\Phi_{4}$ compute $\Omega_{C}$.  
For the reversal, let $f:[0,1] \rightarrow \R$ be usco and use $\Phi_3$ to find $a = \sup f$, and define $X := \{x : f(x) = a\}$. Then $X$ is closed and non-empty, i.e.\ $\Phi_2$ yields an RM-code for $X$. From this RM-code we can define the least element of $X$ arithmetically, i.e.\ using $\exists^2$, yielding $\Phi_{4}$.

\smallskip

Fourth, any $\Phi_5$ clearly computes $\Omega_C$, while the other direction proceeds as follows: let $X\subset 2^{\N}$ be compact and use $\Omega_{C}$ to compute the set of basic neighbourhoods in $2^\N$ disjoint from $X$.  Use $\Omega_{C}$ to decide if $X$ is non-empty and, if it is, locate the lexicographically least element of $X$ using the usual interval-halving technique.  The proof for $\Phi_{5, c}$ proceeds in the same way.  
\smallskip

Fifth, $\Phi_{6}$ is trivially computable from $\Omega_{C}$; for the other direction it suffices to show that if we can decide between perfect and empty sets, we can also decide if a closed set is empty or not. 
Thus, let $X \subseteq [0,1]$ be closed. If $X \cap \Q\neq \emptyset$, then $X \neq \emptyset$. If $X$ and $\Q$ are disjoint, then $X$ is homeomorphic to the set of binary representations for the elements of $X$, i.e.\ $X$ can be regarded as a closed subset of $2^\N$. Using the bijection between $2^\N$ and $2^\N\times 2^{\N}$, we consider the set $X \times 2^\N$. This set is perfect if $X$ is non-empty, and empty otherwise. We can then use $\Phi_6$ on this set to decide if $X$ is empty or not.  The final item is done in the same way.
\end{proof}
Regarding $\Phi_{3}$ in Theorem \ref{ficlu}, it goes without saying that finding suprema for usco functions amounts to the same as finding infima for lsco functions.
Moreover, the definition of `honest RM-code for an lsco function' from \cite{ekelhaft}*{\S5} amounts to assuming that $(\forall x\in B(a, r))(f(x)\geq q)$ is $\Sigma_{1}^{0}$ with parameters $r, q, a\in \Q$. 
The latter formula is clearly equivalent to $q\leq \inf_{x\in B(a, r)}f(x)$, i.e.\ the notion of honest code is connected to $\Phi_{3}$ from Theorem \ref{ficlu}.

\smallskip

Next, we establish the second cluster theorem as in Theorem \ref{seclu}, for which we first introduce the following definition, an essential part of \emph{Ekeland's variational principle} (\cite{oozeivar}), well-known from analysis. 
\bdefi[Critical point]
For $\eps>0$, $x\in [0,1]$ is an \emph{$\eps$-critical point} of $f$ if
\be\label{denkf}
%(?y?X)[(?d(x\UTF{2217},y)\UTF{2264} f(x\UTF{2217})? f(y))?y=x\UTF{2217}].
(\forall y\in [0,1])\big[ ( \eps |x-y| \leq f(x)-f(y) )\di y=x\big].
\ee
\edefi
\noindent 
Clearly, $\Phi_{18}$ and $\Phi_{19}$ in Theorem \ref{seclu} express Ekeland's variational principle. 

\smallskip

Regarding $\Phi_{22}$, the latter witnesses \cite{myerson}*{Theorem 2} which states that a set $S\subset [0,1]$ is ${\bf F}_{\sigma}$ if and only if there is a Baire 1 $f:[0,1]\di \R$ such that $S=\{x\in [0,1]:f(x)\ne 0\}$. 
Similarly, $\Phi_{23}$ is based on the fact that the Baire 1 sets are exactly those in ${\bf F}_{\sigma}\cap {\bf G}_{\delta}$ (see \cite{SVR}*{Theorem 11.6}).  Moreover, $\Phi_{29}$ computes a Lebesgue number\footnote{A real $\delta>0$ is a \emph{Lebesgue number} for the open covering $(O_{n})_{n\in \N}$ of $[0,1]$ if for any subset $X\subset [0,1]$ with diameter $<\delta$, there is $m\in \N$ such that $X\subset O_{m}$.\label{iop}} for an enumerated covering of open sets.
% i.e.\ a rather countable construct.  

\smallskip

We (again) assume that the functionals in Theorem \ref{seclu} are undefined outside of their (clearly specified) domain of definition.  
\begin{thm}[Second cluster theorem]\label{seclu}
The following belong to the $\Omega_{C}$-cluster. 
\begin{itemize}
\item The functional $\Phi_{8}$ that to a closed set $C\subset [0,1]$, outputs $\sup C$.\label{new1}
\item The functional $\Phi_{9}$ that to closed $C\subset [0,1]$ and continuous $f:C\di \R$, outputs $\sup_{x\in C} f(x)$.\label{new2}
\item The functional $\Phi_{10}$ that to closed $C\subset [0,1]$ and usco $f:[0,1]\di \R$, outputs $\sup_{x\in C} f(x)$.\label{new3}
\item \(Urysohn\) Any functional $\Phi_{11}$ that to closed disjoint $C_{0}, C_{1}\subset [0,1]$, outputs continuous $f:[0,1]\di \R$ with $x\in C_{i}\asa f(x)=i$ for $x\in [0,1]$, $i\leq 1$.\label{ton}
\item \(weak Urysohn\) Any `weak' functional $\Phi_{11, w}$ which is the previous item with `continuity' replaced by `Baire 1' or `quasi-continuity'.\label{tonw}
\item \(Tietze\) Any functional $\Phi_{12}$ that to closed $C\subset [0,1]$ and continuous $f:C\di \R$, outputs continuous $g:[0,1]\di \R$ such that $g(x)=f(x)$ for $x\in C$.\label{tonn}
\item \(Tietze-Haussdorf, \cites{tietze, hauzen}\) Any functional $\Phi_{13}$ that to closed $C\subset [0,1]$ and lsco $f:[0,1]\di \R$ continuous on $C$, outputs an increasing sequence $(f_{n})_{n\in \N}$ of continuous functions with pointwise limit $f$ and $f=f_{n}$ on $C$. \label{tietfull}
\item The functional $\Phi_{14}$ that for usco $f:[0,1]\di \R$, decides whether $C_{f}=\emptyset$. \label{dagwo}
\item The functional $\Phi_{15}$ that for usco $f:[0,1]\di \R$, decides whether $f\in BV$ and outputs $V_{0}^{1}(f)$ if so. \label{dagwo2}
%\item The functional $\Phi_{15,w}$ that for usco $f:[0,1]\di \R$, decides whether $f$ is regulated and outputs the associated Waterman variation\footnote{Waterman variation is \dots} if so. \label{dagwo3}
\item Any functional $\Phi_{16}$ that to usco $f:[0,1]\di \R$, outputs a descreasing sequence of continuous functions $(f_{n})_{n\in \N}$ with pointwise limit $f$.\label{ton2}
\item Any functional $\Phi_{17}$ that to usco $f:[0,1]\di \R$, outputs a sequence of continuous functions $(f_{n})_{n\in \N}$ with infimum $f$ \(Dilworth lemma, \cite{dill}*{Lem.\ 4.1}\).\label{ton21}
\item \(Ekeland\) Any functional $\Phi_{18}$ that to lsco $f:[0,1]\di \R$ and $\eps>0$, outputs an $\eps$-critical point $x\in [0,1]$. \label{ton3}
\item \(Ekeland\) Any functional $\Phi_{19}$ that to lsco $f:[0,1]\di \R$, outputs $x\in [0,1]$ such that for all $\eps>0$, $x$ is $\eps$-critical.\label{ton4}
\item Any functional $\Phi_{20}$ that to usco $f:[0,1]\di \R$, outputs an \(honest\) RM-code $\Phi$ that equals $f$ on $[0,1]$.\label{ton5}
\item \(Hahn-Kat\v{e}tov-Tong insertion theorem \cite{hahn1, kate,tong}\) The functional $\Phi_{21}$ that to usco $f:[0,1]\di \R$ and lsco $g: [0,1]\di \R$ with $(\forall x\in [0,1])(f(x)\leq g(x))$, outputs continuous $h:[0,1]\di \R$ with $(\forall x\in [0,1])(f(x)\leq h(x)\leq g(x))$.\label{ton6}
\item Any functional $\Phi_{22}$ that to a sequence $(C_{n})_{n\in \N}$ of closed sets in $[0,1]$, outputs Baire 1 $f:[0,1]\di \R$ and its representation such that $\cup_{n\in \N}C_{n}=\{x\in [0,1]:f(x)\ne 0\}$.\label{myer}
%\item \(Tong and Ercran?\) For usco $f:[0,1]\di \R$ and lsco $g: [0,1]\di \R$ with $(\forall x\in [0,1])(f(x)\leq g(x))$, find continuous $h:[0,1]\di \R$ with $(\forall x\in [0,1])(f(x)\leq h(x)\leq g(x))$.\label{ton6}
\item Any functional $\Phi_{23}$ that to closed $C\subset [0,1]$, outputs increasing $f:\R\di \R$ such that $f'$ is continuous and $C=\{ x\in [0,1]: f'(x)=0 \}$ \(\cite{SVR}*{Ex.\ 1.P}\).
\item The functional $\Phi_{24}$ that to usco $f:[0,1]\di \R$, outputs the associated `rising sun function' $f_{\odot}$, i.e.\ the minimal decreasing $g:[0,1]\di \R$ such that $g\geq f$.\label{tim1}
\item The functional $\Phi_{25}$ that for usco $f:[0,1]\di \overline{\R}$, decides whether $f$ is bounded above by some $N\in \N$.
\item The functional $\Phi_{26}$ that for usco $f:[0,1]\di \overline{\R}$, decides whether $f$ is Riemann integrable on $[0,1]$.
\item The functional $\Phi_{27}$ that for closed $C\subset [0,1]$, decides whether $C$ is a closed interval or not.
\item \(Finkelstein \cite{flinkenstein}\) Any functional $\Phi_{28}$ that for lsco $f:[0,1]\di [0,+\infty)$, outputs continuous $h:([0,1]\times \R)\di [0, +\infty)$ such that for each $x\in [0,1]$, $\lambda y.h(x, y)$ is integrable and $f(x)=\int_{-\infty}^{+\infty}h(x, y)dy$.
\item  Any functional $\Phi_{29}$ that to an open covering $(O_{n})_{n\in \N}$ of $[0,1]$, outputs a \emph{Lebesgue number}$^{\ref{iop}}$ for the covering. 
\end{itemize}
\end{thm}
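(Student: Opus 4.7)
The plan is to establish each item of Theorem \ref{seclu} by exhibiting reductions to and from the $\Omega_{C}$-cluster members already identified in Theorem \ref{ficlu}, using transitivity of $\Omega_{C}$-equivalence modulo $\exists^{2}$. The available tools on our side are $\Phi_{1}$ (emptiness of closed $C\subseteq [0,1]$), $\Phi_{2}$, $\Phi_{2,c}$ (RM-codes of open sets), $\Phi_{3}$, $\Phi_{4}$ (suprema and witnesses for \usco\ functions), $\Phi_{5}$, $\Phi_{5,c}$ (selectors), and $\Phi_{6}$, $\Phi_{7}$ (optimal codes); in most cases classical real-analysis constructions give the forward reductions and carefully padded inputs give the reversals.

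The supremum/decision items $\Phi_{8}$--$\Phi_{10}$, $\Phi_{15}$, $\Phi_{25}$--$\Phi_{27}$ reduce forward to $\Phi_{3}$ via the observation that for closed $C$ the indicator $F_{C}$ is \usco, the function $x\mapsto xF_{C}(x)$ is \usco\ with supremum $\sup C$ when $C\ne\emptyset$, and the level sets $\{x\in C:f(x)\geq q\}$ remain closed when $f$ is \usco. Variation $V_{0}^{1}(f)$ in $\Phi_{15}$ is then the supremum over rational partitions, boundedness in $\Phi_{25}$ reduces to $\Phi_{1}$ on $\{x:f(x)\geq N+1\}$, Riemann-integrability in $\Phi_{26}$ follows from Lebesgue's criterion applied to the closed sets $A_{n}=\{x:\osc_{f}(x)\geq 1/n\}$, and $\Phi_{27}$ follows from $\Phi_{6}$ by checking that the complement decomposes into at most two intervals. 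Each reversal embeds $C$ into $[1/2,1]$ and adjoins $\{0\}$, so that the resulting supremum or decision encodes $\Omega_{C}(C)$.

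The Urysohn/Tietze family $\Phi_{11}, \Phi_{11,w}, \Phi_{12}, \Phi_{13}, \Phi_{21}, \Phi_{28}$ and the approximation items $\Phi_{16}, \Phi_{17}, \Phi_{20}, \Phi_{22}$--$\Phi_{24}$ are handled via classical constructions. The distance $d(x,C)$ to closed $C$ is obtained from the RM-code of $[0,1]\setminus C$ supplied by $\Phi_{2}$ together with $\exists^{2}$, whence the Urysohn quotient $d(x,C_{0})/(d(x,C_{0})+d(x,C_{1}))$ witnesses $\Phi_{11}$, and Tietze, Hahn--Kat\v{e}tov--Tong, and Finkelstein follow by standard bounded-approximation and sandwich arguments. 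For $\Phi_{16}, \Phi_{17}, \Phi_{20}$ one applies $\Phi_{2}$ to the open level sets $\{x:f(x)<q\}$ for $q\in\Q$ to build approximating continuous sequences and an honest RM-code for \usco\ $f$; for $\Phi_{22}$, the weighted sum $\sum_{n}2^{-n}d(\cdot,C_{n})/(1+d(\cdot,C_{n}))$ supplies the Baire 1 witness with representation given by its continuous partial sums, and $\Phi_{23}, \Phi_{24}$ arise by similar explicit formulas. Reversals in this family compose the output witness with already-established cluster members: e.g.\ $\Phi_{12}$ applied to $(C, g\equiv 1)$ followed by $\Phi_{3}$ applied to the resulting continuous extension times $F_{C}$ recovers $\Omega_{C}(C)$ after suitable positivity adjustments, and analogous schemes using the output $f$ combined with $\Phi_{1}$ on preimages $\{f=0\}$ work for $\Phi_{11}, \Phi_{11,w}, \Phi_{13}, \Phi_{21}, \Phi_{28}$.

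The principal obstacles are $\Phi_{14}, \Phi_{18}$--$\Phi_{19}$, and $\Phi_{29}$. For $\Phi_{14}$, one notes that $A_{n}=\{x:\osc_{f}(x)\geq 1/n\}$ is closed for \usco\ $f$, and $C_{f}=\emptyset$ iff $[0,1]=\bigcup_{n}A_{n}$; deciding this requires iterated applications of $\Phi_{1}$ to the $A_{n}$ together with a Baire-category-style analysis inside $\exists^{2}$. The Ekeland items $\Phi_{18}$--$\Phi_{19}$ iterate $\Phi_{5,c}$ on the closed slices $S_{x}:=\{y:f(y)\leq f(x)-\epsilon|x-y|\}$ to produce a descending Cauchy sequence whose limit is $\epsilon$-critical; for $\Phi_{19}$ a diagonal construction over $\epsilon\to 0$ must be carried out uniformly, and this is the most intricate step. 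For $\Phi_{29}$, the Lebesgue-number function $\ell(x)=\sup\{r>0:(\exists n)(B(x,r)\subseteq O_{n})\}$ is \lsco, so $\Phi_{3}$ applied to $-\ell$ yields its infimum. The recurring subtlety across these items is verifying \emph{totality} of the classical constructions on all admissible inputs; this amounts to routine but painstaking case analysis internal to $\exists^{2}$, and constitutes the most delicate portion of the argument.
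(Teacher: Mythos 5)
Your overall strategy (classical constructions for the forward directions, padded inputs for the reversals, everything routed through the first cluster theorem) matches the paper's, and the supremum-type items $\Phi_{8}$--$\Phi_{10}$ are handled essentially as there. However, the reversal scheme you propose for the Urysohn--Tietze--insertion family is circular: to place $\Phi_{12}$ in the $\Omega_{C}$-cluster you must show that $\Phi_{12}+\exists^{2}$ computes $\Omega_{C}$, and at that point you may not invoke $\Phi_{1}$ or $\Phi_{3}$, which are themselves equivalent to $\Omega_{C}$. Your example --- apply $\Phi_{12}$ to $(C,g\equiv 1)$ and then apply $\Phi_{3}$ to the extension times $F_{C}$ --- proves nothing, since $\Phi_{3}$ applied to $F_{C}$ already decides $C=\emptyset$ without any use of $\Phi_{12}$; the same objection applies to ``$\Phi_{1}$ on the preimage $\{f=0\}$'' for $\Phi_{11},\Phi_{11,w},\Phi_{13},\Phi_{21},\Phi_{28}$. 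What the paper exploits instead is that the \emph{outputs} of these functionals are continuous (resp.\ Baire 1, quasi-continuous), and suprema of such functions are computable from $\exists^{2}$ alone; e.g.\ with $C_{1}=X$ and $C_{0}=\{0\}$ one has $X=\emptyset \asa \sup_{x\in[0,1]}f_{X}(x)<1$, decidable by $\exists^{2}$. You also give no reversal at all for the Ekeland items (the paper extracts $\Phi_{4}$ from $\Phi_{18}$ by taking a convergent subsequence of $2^{-k}$-critical points and showing its limit minimises $f$), nor for $\Phi_{29}$, whose reversal is the hardest step of the paper's proof: one fixes $\delta_{0}=\Phi_{29}(\lambda n.O_{n})$ for the rational covering and then removes affine copies of the given closed set $X$ from every interval of length $\geq\delta_{0}$, so that the modified covering has Lebesgue number $\delta_{0}$ if and only if $X=\emptyset$. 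Nothing in your sketch supplies these arguments.

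Two further steps are wrong as stated. For $\Phi_{14}$: an usco function is Baire 1, so $C_{f}$ is comeager and never empty; the item is really the decision of $D_{f}=\emptyset$ (global continuity), which the paper reduces to nonemptiness of the closed sets $D_{f,q}$, an $\Omega_{C}$-question. Your proposal to decide $[0,1]=\bigcup_{n}A_{n}$ by ``iterated applications of $\Phi_{1}$'' plus a Baire-category analysis is not a decision procedure: knowing which $A_{n}$ are nonempty, or even having RM-codes for them, does not determine whether their union covers $[0,1]$ (emptiness of a coded $G_{\delta}$ subset of $[0,1]$ is $\Pi^{1}_{1}$-complete, far beyond $\exists^{2}$). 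For $\Phi_{15}$: the total variation of a discontinuous function is not the supremum over rational partitions --- for $f=\mathbb{1}_{\{\alpha\}}$ with $\alpha$ irrational that supremum is $0$ while $V_{0}^{1}(f)=2$; one must first use $\Phi_{5,c}$ to decide finiteness of the sets $D_{f,q}$ and enumerate $D_{f}$, after which the supremum runs over $\Q$ together with that enumeration (this is exactly the paper's argument, and it is also what makes the decision ``$f\in BV$?'' possible). Finally, for $\Phi_{25}$ and $\Phi_{26}$ the ``adjoin $\{0\}$'' recipe does not yield the reversal; the paper uses the extended-valued indicator equal to $+\infty$ on $C$ and $0$ elsewhere.
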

\begin{proof}
First of all, $\Phi_{8}$ can decide whether a closed $C\subset [0,1]$ is non-empty by checking $[0<\Phi_{8}(C\cup\{0\})] \vee [0\in C]$ using $\exists^{2}$. 
For the reversal, $\Phi_{2}$ from the first cluster theorem provides an RM-code and $\exists^{2}$ readily yields the supremum by \cite{simpson2}*{IV.2.11}.
Clearly, items $\Phi_{9}$ and $\Phi_{10}$ readily compute $\Phi_{8}$ by considering $f(x):=x$ while the other direction follows using $\exists^{2}$ by \cite{simpson2}*{IV.2.11} and $\Phi_{3}$ from the first cluster theorem.
For the latter, obtain a code for the closed set and use $\Phi_{3}$ on the intervals of the code.  

\smallskip

Secondly, the equivalence involving $\Phi_{11}$ is proved in \cite{dagsamVII}*{Theorem 5.3}.  For $\Phi_{11, w}$, it suffices to show that the latter computes $\Phi_{1}$. 
Hence, let $X\subset(0,1)$ be closed, put $C_{1}=X$ and $C_{0}=\{0\}$, and let $f_{X}:[0,1]\di \R$ be the Baire 1 (or quasi-continuous) function provided by $\Phi_{11, w}$ .  
By (the proof of) \cite{dagsamXIV}*{Theorem 2.9}, $\exists^{2}$ can compute $\sup_{x\in [0,1]}f_{X}(x)$, both for Baire 1 and quasi-continuous functions, and we have $X=\emptyset \asa [1>\sup_{x\in [0,1]}f_{X}(x)]$, where the right-hand side is decidable using $\exists^{2}$.  The Tietze functional $\Phi_{12}$ computes the Urysohn functional $\Phi_{11}$ as follows: for closed and disjoint $C_{0}, C_{1}\subset [0,1]$, define $C:= C_{0}\cup C_{1}$ and 
let $f:C\di \R$ be $0$ on $C_{0}$ and $1$ on $C_{1}$.  Now apply the former functional to obtain the latter.
To compute $\Phi_{12}$, use $\Omega_{C}$ to obtain an RM-code for $C\subset [0,1]$.  To obtain a code for $f:C\di \R$, the proof of \cite{dagsamXIV}*{Theorem 2.3}, relativises to the RM-code of $C$.  
With this code in place, the standard second-order proof of the Tietze extension theorem yields an RM-code defined on $[0,1]$ that equals $f$ on $C$.  
Clearly, $\mu^{2}$ converts this code into a third-order function.  

\smallskip

Thirdly, for $\Phi_{14}$, note that for usco $f:[0,1]\di \R$, we have for all $x\in [0,1]$ that 
\[\textstyle
x\in D_{f}\asa [\liminf_{z\di x}f(z)<f(x)],
\] 
\emph{and} we can compute this $\liminf$ using $\exists^{2}$.  
Now define the following set 
\be\label{tago}\textstyle
D_{f, q}:=\{x\in [0,1]: \liminf_{z\di x}f(z)\leq f(x)-q\}.
\ee
Then $D_{f, q}$ is closed (since $f$ is usco) and $\cup_{q\in \Q^{+}}D_{f, q}=D_{f}$.  
Now use $\Omega_{C}+\exists^{2}$ to check whether $(\exists q\in \Q^{+})(D_{f, q}\ne \emptyset)$.  
For the other direction, a closed set $C\subsetneq [0,1]$ is empty if and only if the usco function $\mathbb{1}_{C}$ is continuous.  
Regarding $\Phi_{15}$, consider usco $f$ and $D_{f, q}$ as in the previous paragraph.  Then $f\in BV$ implies that $D_{f, q}$ is finite for all $q\in \Q^{+}$.  
Use $\Phi_{5,c}$ to decide the latter fact and obtain an enumeration of $D_{f}$ if such there is.
The latter enumeration allows one to compute $V_{0}^{1}(f)$ (if it exists), as the supremum in \eqref{tomb} now runs over $\N$ and $\Q$. 
For the reversal, a closed set $C\subsetneq [0,1]$ is empty if and only if $\mathbb{1}_{C}\in BV$ and $V_{0}^{1}(\mathbb{1}_{C})=0$.

\smallskip

Fourth, to compute $\Phi_{16}$, use $\Phi_{3}$ to define $f_{n}:[0,1]\di \R$ as follows:
\be\label{conv}\textstyle
f_{n}(x):= \sup_{y\in [0,1]}(f(y)- n |x-y|  ),
\ee
which is continuous by definition.
Clearly, the pointwise limit of $(f_{n})_{n\in \N}$ is $f$, i.e.\ the latter is Baire 1.  To show that $\Phi_{16}$ computes $\Phi_{3}$, we recall that 
the supremum of a Baire 1 function is computable using $\exists^{2}$ by the proof of \cite{dagsamXIV}*{Theorem 2.9}.
The functional $\Phi_{17}$ is treated in the same way while combining $\Phi_{12}$ and $\Phi_{16}$, one computes $\Phi_{13}$, which had not been treated yet. 

\smallskip

Fifth, to show that $\Phi_{4}$ computes $\Phi_{18}$ and $\Phi_{19}$, let $x_{0}\in [0,1]$ be such that $f(x_{0})=\inf_{x\in [0,1]}f(x)$, i.e.\ $f$ attains its minimum at $x_{0}$.  
Fix $\eps>0$ and suppose \eqref{denkf} is false, i.e.\ there is $y\ne x_{0}$ such that $\eps|y-x_{0}|\leq f(x_{0})-f(y)$.  
This implies $f(x_{0})>f(y)$ and contradicts the minimum property of $x_{0}$.  Hence, $x_{0}$ is $\eps$-critical for all $\eps>0$.  
Now assume $\Phi_{18}$ and let $(x_{k})_{k\in \N}$ be a sequence in $[0,1]$ such that $x_{k}$ satisfies \eqref{denkf} for $\eps=\frac{1}{2^{k}}$, for any $k\in \N$.
Let $(y_{k})$ be a convergent sub-sequence, say with limit $y\in [0,1]$, all found using $\exists^{2}$.  We now show that $f(y)=\inf_{x\in [0,1]}f(x)$, i.e.\ $\Phi_{4}$ is obtained. 
To this end, suppose there is $z\in [0,1]$ such that $f(z)<f(y)$.  Since $f$ is lsco, there is $k_{0}\in \N$ such that $f(z)+\frac{1}{2^{k_{0}}}<f(y_{k})$ for $k\geq k_{0}$.    
Hence, there is $k\geq k_{0}$, such that $\frac{1}{2^{k}}|y_{k}-z|  \leq f(y_{k})-f(z)$.  Since $y_{k}$ is $\frac{1}{2^{k}}$-critical, we obtain $z=y_{k}$, a contradiction. 

\smallskip

Sixth, $\Phi_{20}$ provides an honest RM-code, which can be converted into a Baire 1 representation using $\exists^{2}$ by (the proof of) \cite{ekelhaft}*{Lemma 6.3}; we obtain $\Phi_{16}$ as required.
For the other direction, we note that 
\[\textstyle
(\forall x\in B(a, r))(f(x)\geq q)\asa [q\leq \inf_{x\in (a-r, a+r)}f(x)],
\]
where the infimum operator is provided by $\Phi_{3}$ (and the fact that $f$ is usco if and only if $-f$ is lsco).
Hence, the required (honest) RM-code is readily defined. 

\smallskip

Seventh, $\Phi_{21}$ computes a realiser for the Urysohn lemma as follows: let $C_{0}, C_{1}$ be disjoint subsets of $[0,1]$ and define $O_{1}:=[0,1]\setminus C_{1}$. 
Apply $\Phi_{21}$ to $f(x):= \mathbb{1}_{C_{0}}(x)$ and $g(x):= \mathbb{1}_{O_{1}}(x)$ and note that the resulting continuous $h:[0,1]\di \R$ is such that $x\in C_{i}\di h(x)=1-i$ for $x\in [0,1]$ and $i\in \{0,1\}$, i.e.\ as required for the Urysohn lemma.   %By \cite{dagsamVII}*{Theorem 5.3} or the above, a realiser for the Urysohn computes $\Omega_{C}$. 
For the reversal, one observes that the proof of \cite{good}*{Theorem 1} goes through without modification. 

\smallskip

Eight, for a closed set $C\subset [0,1]$, $\Phi_{22}$ provides a Baire 1 representation of $\mathbb{1}_{C}$.  
By (the proof of) \cite{dagsamXIV}*{Theorem 2.9}, $\exists^{2}$ computes the supremum of Baire 1 functions.   
Hence, we can decide whether $C$ is non-empty, yielding $\Omega_{C}$. 
For the other direction, the proof of \cite{SVR}*{Theorem 11.6} shows that $\Phi_{3}$ computes $\Phi_{22}$.  
To compute $\Phi_{24}$, we have $f_{\odot}(x)=\sup_{y\in [x, 1]} f(y)$ for usco $f$ (see \cite{SVR}*{Excercise 1.G}), i.e.\ $\Phi_{3}$ computes the former functional.    
For the other direction, $\Phi_{24}$ allows us to compute $\sup_{x\in [p, q]}f(x)$ by applying the former to $F$ defined as follows: $F(f, x, q):= f(x)$ for $x\leq q$ and $f(q)$ otherwise.  For fixed $q$ and usco $f$, $\lambda x.F(f,x, q)$ is also usco.   To show that $\Phi_{23}$ computes $\Phi_{2}$, in case $C=\{ x\in [0,1]: g(x)=0 \}$ where $C$ is closed and $g$ continuous, one readily computes an RM-code.  
Indeed, $\exists^{2}$ suffices to compute the sup and inf of continuous functions (see \cite{kohlenbach2}*{\S3}).  To compute $\Phi_{23}$, let $C$ be closed and consider $d(x, C)$, which is computable in terms of $\exists^{2}$ and an RM-code for $C$.  Since the distance function is continuous, use $\exists^{2}$ to define $f_{C}(x):=\int_{0}^{x}d(y, C)dy$.  Clearly, this function is increasing and satisfies the other conditions for $\Phi_{23}$ in light of the fundamental theorem of calculus.  

\smallskip

Nineth, regarding $\Phi_{25}$ and $\Phi_{26}$, let $C\subset [0,1]$ be closed and define the usco function $\mathbb{2}_{C}:[0,1]\di \overline{\R}$ 
as follows: $\mathbb{2}_{C}(x):= +\infty$ in case $x\in C$ and $\mathbb{2}_{C}(x)=0$ otherwise.
Clearly, $C=\emptyset \asa (\exists N\in \N)(\forall x\in [0,1])(\mathbb{2}_{C}(x)\leq N) $, i.e.\ $\Phi_{25}$ computes $\Omega_{C}$.    
The same holds for $\Phi_{26}$ as Riemann integrability implies boundedness.  For the other direction, let $f:[0,1]\di \overline{\R}$ be usco and note that this function is finitely bounded above if and only if 
$(\exists N\in \N)( E_{N}=\emptyset)$ for the closed set $E_{n}:=\{ x\in [0,1]:  f(x)\geq n\}$.  Hence, $\Phi_{1}$ computes $\Phi_{25}$.  For $\Phi_{26}$, we note that Riemann integrability is equivalent to boundedness plus continuity almost everywhere.  The latter property is equivalent to the closed sets $D_{f, q}$ from \eqref{tago} having measure zero for all $q\in \Q^{+}$.  Now $\Phi_{2}(D_{f, q})$ yields an RM-code for these sets and $\exists^{2} $ can decided whether the codes have measure zero.  Hence, $\Phi_{26}$ is also computable from $\Omega_{C}$.  

\smallskip

Tenth, to compute $\Phi_{27}$ from $\Phi_{8}$, let $C\subset [0,1]$ be closed and use $\Phi_{8}$ to obtain $\sup C$ (and $\inf C$).  In case $\inf C<_{\R} \sup C$, the set $C$ equals the interval $[\inf C, \sup C]$ if and only if $(\forall q\in \Q\cap [\inf C, \sup C])(q\in C)$.  Now assume $\Phi_{27}$ and consider an open set $O\subset[0,1]$ where $C:=[0,1]\setminus O$; define the closed sets $L_{q}:= C\cap [0, q]$ and $R_{q}:= C\cap [q, 1]$ and note that $O$ is an open interval if and only if $(\forall q\in \Q\cap O)(\textup{$L_{q}$ and $R_{q}$ are intervals}  )$.  Hence, $\Phi_{27}$ allows us to decide whether an open set $O\subset[0,1]$ is an interval.  
Since $(p, q)\subset O$ if and only if $(p, q)\cap O$ is an interval, $\Phi_{2}$ can be computed, as required. 

\smallskip

Eleventh, to compute $\Phi_{28}$, the proof of \cite{flinkenstein}*{Theorem 1} guarantees that given lsco $f:[0,1]\di [0, +\infty)$, the function $h:([0,1]\times \R)\di [0, +\infty)$ is definable (using only $\exists^{2}$) from an increasing sequence $(f_{n})_{n\in \N}$ of continuous functions with pointwise limit $f$; this sequence is obtained from $\Phi_{16}$.  The other direction is also immediate from the aforementioned proof.  

\smallskip

Finally, the combination $\Omega_C+\exists^2$ computes an RM-code of an open set, from which $\exists^{2}$ readily computes a Lebesgue number as required for $\Phi_{29}$. 
For the other direction, let $X \subseteq [0,1]$ be closed and let $\Phi_{29}$ be given. We will use $\exists^2$ and $\Phi_{29}$ to decide if $X$ is empty or not. 
Let $\big((p_n,q_n)\big)_{n \in \N}$ be an enumeration of all pairs of rational numbers $p < q$ in $ [0,1]$ and  let $O_n = (p_n,q_n) \cap [0,1]$ be the associated open covering. 
Further, if $[a,b] \subseteq [0,1]$ we write `$x \in [a,b]^{ X}$' if $\frac{x - a}{b-a} \in X$. Intuitively, $[a,b]^X$ is the range of $X$ under the affine map from $[0,1]$ onto $[a,b]$.

\smallskip

Now fix $\delta_{0} := \Phi(\lambda n.O_n)$ and let us construct a new covering $(O'_n)_{n \in \N}$ from the given one, depending on $X$ and $\delta_{0}$.    
Our construction will guarantee that each $O'_n = O_n$ if $X = \emptyset$ while no $O'_n$ will contain an interval of length $\delta_{0}$ if $X \neq \emptyset$. 
We can then test if $X$ is empty or not by testing if $\Phi(\lambda n.O'_n) = \delta_{0}$ or not.
 
 \medskip

For the construction, if $q_n - p_n < \delta_{0}$, we put $O'_n = O_n$, which ensures that our new sequence of sets will be a covering of $[0,1]$. 
If $q_n - p_n \geq \delta_{0}$, we let $k\in \N$ be such that $\frac{q_n - p_n}{k} < \delta_{0}$, and we put $a_{n,i}: = p_n + i(\frac{q_n - p_n}{k})$ for $i = 0 , \ldots ,k$. 
Finally, we define $O'_n$ from $O_n$ by removing all elements of the subsets of $O_n$ of the form $[a_{n,i},a_{n,i+1}]^{X} $ for $i = 0 , \ldots , k-1$. 
This new open covering clearly has the properties described in the previous paragraph, and we are done.
\end{proof}
Regarding $\Phi_{15}$, we could generalise to \emph{regulated} functions and output the associated \emph{Waterman} variation (see \cite{voordedorst}). 
Regarding $\Phi_{18}$ and $\Phi_{19}$, we could similarly study the Caristi fixed point theorem based on its second-order development in \cite{ekelhaft2}.
Regarding $\Phi_{21}$, we could obtain similar equivalences for the insertion theorems by Dowker and Michael (\cites{michael1, dowker1}), following the proofs in \cite{good}. 
We note that Hahn was the first to prove the above insertion theorem for metric spaces (\cite{hahn1}).  
Regarding $\Phi_{26}$, we note that the Lebesgue integral is not suitable here.  
Finally, we could also generalise certain results to Baire 1$^{*}$ as the latter amounts to the sum of an usco and a lsco function by \cite{mentoch}.

\subsection{More on the $\Omega_{C}$-cluster}\label{relo}
We establish further results related to the $\Omega_{C}$-cluster as follows.  
\begin{itemize}
\item We identify functionals in the $\Omega_{C}$-cluster that decide basic properties of \emph{arbitrary} functions (Section \ref{trini}).  
\item We generalise the notion of \emph{modulus of continuity} and identify functionals in the $\Omega_{C}$-cluster that output these generalisations (Section \ref{mocosec}).  
%\item In Section \ref{ponk}, we introduce a new representation of open sets, inspired by the `gauge' definition of Baire 1 functions (\cite{leebaire}), and connect the 
%associated conversion functional to $\Omega_{C}$. 
\item In Section \ref{trinix}, we (briefly) study computational equivalences for $\Omega_{C}$ (and $\Omega$) assuming functionals stronger than $\exists^{2}$.  
\end{itemize}
Most of the results in Sections \ref{trini}-\ref{trinix} could be included in the cluster theorems of Section \ref{clute}; nonetheless, we believe the former deserve their own section, due to their novelty but also in the interest of clarity of presentation.  

\subsubsection{Computing with arbitrary functions}\label{trini}
To decide basic properties of \emph{arbitrary} functions $f:[0,1]\di \R$, like boundedness or Riemann integrability, one needs (exactly) Kleene's $\exists^{3}$.  
In this section, we show that additionally assuming an oscillation function as input, the decision procedure is more tame, namely $\Omega_{C}$.  
\begin{thm}\label{ICT}
The following functionals belong to the $\Omega_{C}$-cluster:
\begin{itemize}
\item  for input any $f:[0,1]\di \R$ with oscillation function, decide if $f$ is continuous or not on $[0,1]$.  
\item  for input any $f:[0,1]\di \R$ with oscillation function, decide if $f$ is continuous or not, and output $x\in D_{f}$ in the latter case and a modulus of continuity in the former.
\item  for input any $f:[0,1]\di \R$ with oscillation function, decide if $f$ is in $BV$ or not on $[0,1]$, and output $V_{0}^{1}(f)$ in the former case.  
\end{itemize}
%\begin{center}
% for input any discontinuous $f:[0,1]\di \R$ with oscillation function, output $x\in D_{f}$.
%\end{center}
\end{thm}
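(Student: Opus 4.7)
The plan is to leverage that the oscillation function $\osc_f:[0,1]\to \R$ is itself upper semi-continuous, which reduces most computations to the earlier cluster theorem results. Indeed, if $x_n\to x$ in $[0,1]$ and $\eps>0$, fix $k$ with $\osc_f(B(x,1/2^k))\le \osc_f(x)+\eps$; for $n$ large enough, $B(x_n,1/2^{k+1})\subseteq B(x,1/2^k)$, so $\osc_f(x_n)\le \osc_f(B(x_n,1/2^{k+1}))\le \osc_f(x)+\eps$. In particular, each level set $D_{f,q}:=\{x\in [0,1]:\osc_f(x)\ge q\}$ is closed, and $f$ is continuous at $x$ iff $\osc_f(x)=0$, so $D_f=\bigcup_n D_{f,1/n}$.

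For the reverse direction, given closed $C\subseteq[0,1]$, first use $\exists^2$ to search for a rational in $C$: if one is found, $C$ is non-empty; otherwise $C\cap \Q=\emptyset$. In the latter case, one verifies $\osc_{\mathbb{1}_C}=\mathbb{1}_C$: for $x\notin C$ a neighbourhood of $x$ avoids the closed set $C$, while for $x\in C$ density of $\Q\subseteq [0,1]\setminus C$ provides nearby points with value $0$ while $\mathbb{1}_C(x)=1$. Thus $(\mathbb{1}_C,\mathbb{1}_C)$ is a legitimate input pair, and $\mathbb{1}_C$ is continuous on $[0,1]$, or lies in $BV$ with $V_0^1(\mathbb{1}_C)=0$, if and only if $C=\emptyset$; any discontinuity witness for $\mathbb{1}_C$ is a point of $C$. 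Hence each functional decides emptiness of $C$ given $\exists^2$, i.e.\ computes $\Omega_C$.

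For the forward direction on the first two items, given $\Omega_C$ and $\osc_f$, apply $\Phi_3$ of Theorem~\ref{ficlu} to compute $s:=\sup_{[0,1]} \osc_f$; then $f$ is continuous iff $s=0$, which $\exists^2$ decides. In the discontinuous case, $\Phi_4$ applied to $\osc_f$ returns $y\in[0,1]$ attaining the positive supremum, so $y\in D_f$. In the continuous case, $f$ is uniformly continuous on $[0,1]$ by compactness, so a modulus is produced via $\mu^2$: for each $n$, find the least $k$ with $|f(q_1)-f(q_2)|\le 1/2^{n+1}$ for all $q_1,q_2\in \Q\cap[0,1]$ satisfying $|q_1-q_2|\le 1/2^k$, and by continuity this extends from rationals to reals with standard $\eps/3$-bookkeeping.

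For the third item, use $\Phi_{5,c}$ of Theorem~\ref{ficlu} to decide, for each $n$, whether the closed set $D_{f,1/n}$ is finite. If some $D_{f,1/n}$ is infinite, then $f\notin BV$: picking $k$ distinct points $y_1<\cdots<y_k$ in $D_{f,1/n}$ and using $\exists^2$ to locate pairs of points in disjoint small neighbourhoods on which $f$ differs by nearly $1/n$ gives partitions with variation at least $k(1/n-\eps)$, unbounded in $k$. Otherwise, enumerate $D_f=\bigcup_n D_{f,1/n}$ as a sequence $(d_m)_{m\in\N}$ via $\Phi_{5,c}$, and let $V$ be the supremum of partition sums $\sum_i |f(x_i)-f(x_{i-1})|$ over partitions with points drawn from $\Q\cup\{d_1,\ldots,d_N\}$, $N\in\N$, which $\exists^2$ can compute. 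Since $f$ is continuous off $D_f$, any partition of $[0,1]$ can be approximated by one with points in $\Q\cup D_f$ at negligible cost in variation, hence $V=V_0^1(f)$ in the extended-real sense; $\exists^2$ then decides whether $V<\infty$ and outputs $V$ if so. The main obstacle is precisely this BV case: verifying that $V_0^1(f)$ is faithfully recovered from rational-plus-$D_f$ partitions when all $D_{f,1/n}$ are finite, and that the infiniteness test genuinely forces $f\notin BV$ in the opposite case.
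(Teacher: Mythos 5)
Your proposal is correct and follows essentially the same route as the paper: forward, exploit that $\osc_{f}$ is usco with closed level sets $D_{f,q}$ and invoke the cluster-theorem functionals ($\Phi_{3}/\Phi_{4}$ for continuity and a witness, $\Phi_{5,c}$ for finiteness/enumeration of $D_{f}$ and hence $V_{0}^{1}(f)$ over partitions from $\Q$ plus the enumerated discontinuities); backward, reduce emptiness of a closed $C$ with $C\cap\Q=\emptyset$ to the given functional via $\mathbb{1}_{C}$, using $\osc_{\mathbb{1}_{C}}=\mathbb{1}_{C}$. The only differences are cosmetic (maximiser of $\osc_{f}$ instead of interval halving on a non-empty $D_{k}$, and an explicit uniform-continuity modulus where the paper cites the literature), and the "obstacle" you flag — recovering $V_{0}^{1}(f)$ from rational-plus-$D_{f}$ partitions — is handled by exactly the perturbation argument you sketch, which the paper also takes for granted.
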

\begin{proof}
Fix $f:[0,1]\di \R$ and its oscillation function $\osc_{f}:[0,1]\di \R$.  
For the first item, assume $\Omega_{C}$ and note that by the associated cluster theorem, we can decide whether the closed set 
\be\label{beg}\textstyle
D_{k}=\{ x\in [0,1]:\osc_{f}(x)\geq \frac{1}{2^{k}} \}
\ee
is non-empty.  Then $f$ is continuous everywhere on $[0,1]$ if and only if $D_{f}=\cup_{k\in \N}D_{k}$ is empty, where the latter is decidable using $\Omega_{C}+\exists^{2}$.  
Alternatively, $f$ is continuous everywhere on $[0,1]$ if and only if we have that $\osc_{f}$ is continuous everywhere on $[0,1]$ \emph{and} $\osc_{f}(q)=0$ for all $q\in [0,1]\cap \Q$.
Since $\osc_{f}$ is usco, we can use $\Phi_{14}$ from Theorem \ref{seclu} to decide the continuity of $f$.
Now assume a functional as in the first item and consider $C\subset [0,1]$.  We may assume that $C\cap \Q=\emptyset$ as $\mu^{2}$ can enumerate the rationals in $C$.  
Define $f(x):= \mathbb{1}_{C}$ and note that $\osc_{f}(x)=f(x)$ for all $ x\in [0,1]$ by a straightforward case distinction. 
Thus, the first centred functional can decide whether $f$ is continuous on $[0,1]$, which is equivalent to deciding whether $C=\emptyset$, i.e.\ we obtain $\Omega_{C}$ via the associated cluster theorem. 

\smallskip

For the second item, in case $D_{k}\ne \emptyset$, the usual interval-halving technique will provide $x\in D_{k}\subset D_{f}$.  
A modulus of continuity for a continuous function is computable in $\exists^{2}$ by \cite{kohlenbach4}*{\S4}.

\smallskip

For the third item, assume $\Omega_{C}$ and consider again the closed set $D_{k}$ from \eqref{beg}.
Using $\Phi_{5, c}$ from Theorem \ref{ficlu}, we can decide whether $D_{k}$ is finite, and if so, enumerate it.  
Now, assuming $D_{f}$ is countable and given an enumeration of this set, $\exists^{2}$ can compute $V_{0}^{1}(f)$ (if it exists) as the supremum in \eqref{tomb} now runs over $\Q\times \N$. 
Hence, we obtain the third centred functional.  
To show that the third centred functional computes $\Omega_{C}$, note that for a closed set $C\subset [0,1]$, one has $C=\emptyset $ if and only if $\mathbb{1}_{C}$ is in $BV$ with $V_{0}^{1}(\mathbb{1}_{C})=0$.  
Moreover, since $\Omega_{C}$ computes $\Omega$, we can obtain the variation function $V_{0}^{x}(f)$ for any $f\in BV$ by the second cluster theorem for $\Omega$ (\cite{dagsamXII}). 
The oscillation function is obtained in the same way as in the first paragraph.   
\end{proof}
Regarding the final item of Theorem \ref{ICT}, we could generalise to \emph{regulated} functions and output the associated \emph{Waterman} variation (see \cite{voordedorst}). 

\subsubsection{On moduli of continuity}\label{mocosec}
The notion of \emph{modulus of continuity} is central to a number of computational approaches to mathematics.  In this section, we generalise this concept and establish a connection to the $\Omega_{C}$-cluster.  

\smallskip

First of all, we need the following definitions, where we always assume that the functions at hand are not totally discontinuous, i.e.\ $C_{f}\ne \emptyset$.  % to guarantee non-trivialness. 
%
%Next, \emph{modulus functions} are an important tool in the study of analysis.  Intuitively, a modulus outputs the `$\delta>0$' on input the `$\eps>0$' and other data from an `epsilon-delta' definition.  
%Moduli of continuity are well-known and central to computability theory and Reverse Mathematics.  We shall make use of the following constructs in relation to Definition \ref{klop}.  
\bdefi
For $f:[0,1]\di \R$, a function $F:(\R\times \N)\di \N$ is a \emph{modulus of continuity} if 
\be\label{tok1}\textstyle
(\forall k\in \N, x\in C_{f},y\in [0,1] ) ( |x-y|<\frac{1}{2^{F(x, k))}}\di |f(x)-f(y)|<\frac{1}{2^{k}}).
\ee
\edefi
\begin{defi}
Let $M_{\Gamma}:(\R\di \R)\di (\R\times \N)\di \N$ be a functional that on input $f:[0, 1]\di \R$ in the function class $\Gamma$ ouputs a modulus of continuity.  
\edefi
Secondly, we have the following where `$\BV$' stands for `bounded variation'.
\begin{thm}\label{thm2.9}
Assuming $\exists^{2}$, $\Theta + M_{\BV}$ computes $\Omega$ and $\Omega$ computes $M_{\BV}$.
\end{thm}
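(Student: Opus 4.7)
My plan is to prove the two inclusions separately. I would first dispatch the reverse direction ($\Omega$ computes $M_{\BV}$) as it builds directly on existing infrastructure: by the second cluster theorem for $\Omega$ established in \cite{dagsamXII}, the combination $\Omega + \exists^{2}$ already computes the variation function $V(x) := V_{0}^{x}(f)$ for any $f \in \BV$. Since $f$ and $V$ share the same continuity points and $|f(y) - f(x)| \leq |V(y) - V(x)|$ pointwise, it suffices to produce a modulus of continuity for the monotone function $V$ at continuity points and then hand this over as a modulus for $f$. For $x \in C_{V}$ and $k \in \N$, I would use $\exists^{2}$ to search over rational pairs $q_{1} < x < q_{2}$ for one satisfying $V(q_{2}) - V(q_{1}) < \frac{1}{2^{k}}$; such a pair exists by continuity of $V$ at $x$, and monotonicity propagates the bound to $|V(y) - V(x)| < \frac{1}{2^{k}}$ on $(q_{1}, q_{2})$. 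One then outputs $F(x, k)$ as any integer with $\frac{1}{2^{F(x,k)}} < \min(x - q_{1}, q_{2} - x)$.

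For the forward direction ($\Theta + M_{\BV}$ computes $\Omega$), the strategy would be to encode the input of $\Omega$ as a bounded variation function. Given $X \subset 2^{\N}$ with $|X| \leq 1$, I would transfer $X$ into a subset $X' \subset [0,1]$ with $|X'| \leq 1$ via binary expansion and consider $f_{X} := \mathbb{1}_{X'}$. This function is of bounded variation with $V_{0}^{1}(f_{X}) \leq 2$, and its unique possible point of discontinuity is the element of $X'$, if any. Applying $M_{\BV}$ to $f_{X}$ then yields, for each rational $q \in [0,1] \setminus X'$, a radius $\frac{1}{2^{F(q,1)}}$ such that the ball $B(q, \frac{1}{2^{F(q,1)}})$ avoids $X'$: indeed, $f_{X}$ takes only the values $0$ and $1$, so any oscillation below $\frac{1}{2}$ forces constancy. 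This produces, via $\exists^{2}$, an RM-code for the open set $[0,1] \setminus X'$. At this stage $\Theta$ enters, applied to $f_{X}$ together with the modulus data, to supply a candidate element $x_{0} \in 2^{\N}$ for $X$ when $X \neq \emptyset$; $\exists^{2}$ then verifies whether $x_{0} \in X$ and returns $\Omega(X)$ accordingly.

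The main obstacle I anticipate is the forward direction, specifically the interface with $\Theta$. One must show that the algorithm distinguishes the empty case $X = \emptyset$ from the singleton case using only the data provided by $\Theta$ and $M_{\BV}$, without inadvertently invoking a stronger oracle such as $\Omega_{C}$ which would trivialise the claim, since $\Omega_{C}$ is already known to decide emptiness of closed sets and would make the argument collapse to $\Omega_{C}$ computing $\Omega$. A secondary subtlety in the reverse direction is ensuring that the rational bisection genuinely witnesses the modulus property at every $x \in C_{f}$ uniformly in $(x,k)$, rather than merely on a dense subset; the monotonicity of $V$ combined with its continuity at $x$ should handle this cleanly, but the step deserves verification when passing from the (possibly third-order) variation function supplied by $\Omega$ to its rational evaluations.
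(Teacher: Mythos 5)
Your second half (reverse direction) is fine and takes a genuinely different route from the paper: the paper lets $\Omega+\exists^{2}$ enumerate $D_{f}$ (second cluster theorem of the cited work) and then defines $F$ by restricting the real quantifier in ``$x\in C_{f}$'' to $\Q$ plus that enumeration, whereas you go through the variation function $V(x)=V_{0}^{x}(f)$, monotonicity, and the fact that $C_{f}=C_{V}$ with $|f(y)-f(x)|\leq |V(y)-V(x)|$. Both use the same cluster theorem as the source of the extra data, and your argument is correct at points of $C_{f}$. One small repair: your unbounded search over rational pairs $q_{1}<x<q_{2}$ with $V(q_{2})-V(q_{1})<2^{-k}$ diverges when $x\in D_{f}$ and $2^{-k}$ is below the jump of $V$ at $x$, so to get a total $F:(\R\times\N)\to\N$ you should first decide ``$x\in C_{f}$'' via $\exists^{2}$ (left and right limits exist since $BV$ functions are regulated, and $\exists^{2}$ computes them) and output a default value at discontinuity points.

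The forward direction has a genuine gap, and it is exactly at the interface with $\Theta$ that you flagged. First, applying the modulus only at rational points does not give an RM-code for $[0,1]\setminus X'$: the radii $2^{-F(q,1)}$ may shrink arbitrarily fast as $q$ approaches an irrational point of the complement, so the union of the rational-centred balls is merely some open subset of $[0,1]\setminus X'$, possibly proper. (A sanity check that the claimed step is too strong: if you really had an RM-code for $[0,1]\setminus X'$, then $\exists^{2}$ alone could decide $X'=\emptyset$ by checking whether some finite initial segment of the code covers $[0,1]$, and $\Theta$ would play no role in the theorem at all.) Second, ``$\Theta$ applied to $f_{X}$ together with the modulus data'' is not a well-specified application, and no mechanism is given for why its output should contain an element of $X$. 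The missing idea is to use the modulus at \emph{every} point, together with the characteristic function of $X$, to build a canonical covering of all of $[0,1]$: set $\Psi(x)=2^{-F(x,1)}$ for $x\notin X$ and $\Psi(x)=1$ for $x\in X$, and apply $\Theta$ to $\Psi$ to obtain finitely many points $x_{1},\dots,x_{n}$ whose $\Psi$-balls cover $[0,1]$. Since $\mathbb{1}_{X}$ only takes the values $0$ and $1$, every ball centred at a point outside $X$ avoids $X$ by the modulus property with $k=1$; hence if $X\neq\emptyset$ its unique element must occur among $x_{1},\dots,x_{n}$, and $\exists^{2}$ checks this finite list, yielding $\Omega_{b}$ (hence $\Omega$). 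Without this (or an equivalent) use of $\Theta$ on an uncountable covering defined on all of $[0,1]$, your forward argument does not go through.
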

\begin{proof}
For the first part, to compute $\Omega_{ b}$, let $X \subset [0,1]$ have at most one element, and let $f$ be the characteristic function of $X$. Trivally, $f$ is in ${BV}$, and let $F:(\R\times \N)\di \N$ be a modulus of continuity of $f$.  Define $\Psi(x) =\frac{1}{2^{ F(x,1)}}$ if $x \not \in X$ and $\Psi(x) =1$ if $x \in X$ and consider $\Theta(\Psi) = \{x_1 , \ldots , x_n\}$.  By definition, $X$ is non-empty if and only if  one of the $x_i$ is in $X$.

\smallskip

For the second part, we have $x\in C_{f}\asa [f(x+)=f(x)=f(x-)]$ and $\exists^{2}$ can compute left and right limits for regulated functions.  
Moreover, $\Omega$ can enumerate $D_{f}$ by the second cluster theorem from \cite{dagsamXII}.  
A modulus of continuity is then readily defined by restricting the quantifier over the reals in `$x\in C_{f}$'  to: $\Q$ and the enumeration of $D_{f}$. 
\end{proof}
Thirdly, we can improve the previous theorem by considering the functional $\Delta$ from \cite{dagsamVII}*{\S6-7}.  Recalling Definition \ref{char}, this functional is such that for R2-open $O\subset [0,1]$, $\Delta(O)+\exists^{2}$ computes an RM-code for $O$.   
\begin{thm}
Assuming $\exists^{2}$, $\Delta + M_{\BV}$ computes $\Omega$.  
\end{thm}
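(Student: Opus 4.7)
The plan is to mimic the proof of Theorem~\ref{thm2.9}, replacing the call to $\Theta$ with a call to $\Delta$ followed by a Heine--Borel style search for a finite subcover, performed using $\exists^{2}$ from the RM-code that $\Delta$ produces. We work with $\Omega_b$ in the guise of subsets of $[0,1]$; via binary expansion this is equivalent to the formulation for $2^{\N}$ given $\exists^{2}$.

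Let $X\subset [0,1]$ satisfy $|X|\leq 1$, and consider $f:=\mathbb{1}_X$. Then $f\in \BV$ (with total variation at most $2$), so $F:=M_{\BV}(f)$ is a modulus of continuity: for each $x\in C_f=[0,1]\setminus X$ and $y\in [0,1]$, $|x-y|<2^{-F(x,k)}$ implies $|f(x)-f(y)|<2^{-k}$. I would then build an R2-representation $Y:\R\di \R$ of $O:=\R\setminus X$ by setting $Y(x):=d(x,[0,1])$ for $x\notin [0,1]$, $Y(x):=2^{-F(x,1)}$ for $x\in [0,1]\setminus X$, and $Y(x):=0$ for $x\in X$. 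Membership in $X$ is decidable from $f$ and $\exists^{2}$. The verification that $B(x,Y(x))\subset O$ for $x\notin X$ is immediate outside $[0,1]$, and inside $[0,1]\setminus X$ it uses that $f$ is $\{0,1\}$-valued: if $|y-x|<2^{-F(x,1)}$ and $y\in [0,1]$, then $|f(y)-f(x)|<1/2$ forces $f(y)=0$, i.e.\ $y\notin X$.

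Next I would apply $\Delta$ to $(O,Y)$ and invoke $\exists^{2}$ to obtain an RM-code $(a_n,b_n)_{n\in \N}$ with $O=\bigcup_{n\in \N}(a_n,b_n)$. Since $X=\emptyset$ if and only if $[0,1]\subset O$, and since the property `there exists $N$ with $[0,1]\subset \bigcup_{n<N}(a_n,b_n)$' is $\Sigma^{0}_{1}$ once we use $\exists^{2}$ to test coverage of $[0,1]$ by finitely many rational intervals, $\exists^{2}$ decides emptiness of $X$. When $X=\{x_0\}$ is nonempty, I would locate $x_0$ by interval-halving: for rational $c\in (0,1)$, use $\exists^{2}$ to decide whether $[0,c]$ admits a finite subcover from the RM-code; exactly one of $[0,c]$ and $[c,1]$ does, and $x_0$ lies in the other. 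Iterating yields $x_0$ to arbitrary precision, hence its binary expansion, which is the output required of $\Omega$ (and $\Omega_b$).

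The main obstacle is ensuring that the `finite subcover of $[p,q]$ from $(a_n,b_n)_{n<N}$' predicate is faithfully decidable by $\exists^{2}$ in both directions. Correctness in the $X=\emptyset$ case is standard Heine--Borel; correctness in the $X=\{x_0\}$ case rests entirely on the RM-code truly representing $O=\R\setminus\{x_0\}$, which is exactly the guarantee $\Delta$ provides from the R2-representation built above. A minor but easy-to-overlook point is using $d(x,[0,1])$ rather than a constant value of $Y$ outside $[0,1]$, so that the R2 condition is genuinely satisfied at the endpoints $0$ and $1$ without accidentally asking that $B(0,Y(0))$ or $B(1,Y(1))$ sit inside $O$ when $0$ or $1$ might belong to $X$.
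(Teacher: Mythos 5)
Your proposal is essentially the paper's own proof: take $f=\mathbb{1}_X$ for $X$ with at most one element, use $M_{\BV}(f)$ to build an R2-representation of the complement of $X$ that vanishes on $X$, apply $\Delta$ (with $\exists^2$) to get an RM-code, and decide $X=\emptyset$ arithmetically via the Heine--Borel finite-subcover test; the paper stops at this point, i.e.\ at $\Omega_b$, since $\Omega_b+\exists^2$ computes $\Omega$ by the equivalence cited earlier, whereas you re-derive that step by hand. The only quibble is in your extra interval-halving step: if $x_0=c$ is rational then \emph{neither} of $[0,c]$, $[c,1]$ admits a finite subcover, but this is harmless since $\exists^2$ applied to $\lambda n.f(q_n)$ first detects any rational element of $X$, after which your halving argument goes through.
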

\begin{proof}We let $X$, $f$ and $F$ be as in the first part of the proof of Theorem \ref{thm2.9}, and we let $\Psi_1$ be defined like $\Psi$ in that proof with the exception that $\Psi_1(x) = 0$ when $x \in X$. Let $O = [0,1] \setminus X$. Then $\Psi_1$  is an R2-representation of $O$, and $\Delta$ provides an RM-code for $O$. Using $\exists^2$ we can now decide $X=\emptyset$, as required for $\Omega_{b}$.
\end{proof}
Finally, we establish a connection to $\Omega_{C}$ as follows.  
\begin{thm}
The combination $\exists^{2}$, $M_{\usco}+\Delta$ is in the $\Omega_{C}$-cluster.  
\end{thm}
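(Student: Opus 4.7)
The plan is to verify both inclusions of Definition \ref{specs}: that $M_{\usco} + \Delta + \exists^2$ computes $\Omega_C$, and conversely that $\Omega_C + \exists^2$ computes both $M_{\usco}$ and $\Delta$.

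For the forward direction, given closed $C \subseteq [0,1]$, we set $f := \mathbb{1}_C$. Since $C$ is closed, each level set $\{y : f(y) \geq r\}$ is either $[0,1]$, $C$, or $\emptyset$, so $f$ is usco. Moreover every $x \in [0,1] \setminus C$ lies in $C_f$ because $f$ is constantly $0$ on a relative neighbourhood of $x$. Applying $M_{\usco}$ to $f$ yields a modulus $F$. We define $Y: \R \di \R$ by $Y(x) := 1/2^{F(x,1)}$ if $x \in [0,1] \setminus C$ and $Y(x) := 0$ otherwise (using $\exists^2$ and the characteristic function of $C$). The key observation is that for $x \in [0,1]\setminus C$ and $y \in B(x, Y(x)) \cap [0,1]$, the modulus property gives $|f(y) - f(x)| < 1/2$, which forces $f(y) = 0$ since $f$ is $\{0,1\}$-valued, so $y \notin C$. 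Thus $Y$ is an R2-representation of $O := [0,1] \setminus C$, and $\Delta$ produces an RM-code for $O$. Finally, $\exists^2$ decides (via Heine-Borel extraction of a finite subcover) whether this code covers $[0,1]$, equivalently whether $C = \emptyset$.

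For the reverse direction, $\Omega_C + \exists^2$ computes $\Delta$ at once: the characteristic function of an R2-open $O$ is $\exists^2$-definable from its representation $Y$ by testing $Y(x) > 0$, and $\Phi_2$ of Theorem \ref{ficlu}, which belongs to the $\Omega_C$-cluster, converts this into an RM-code. To compute $M_{\usco}$, given usco $f$, we compute $\sup_{y \in [a,b]} f(y)$ via $\Phi_{10}$ of Theorem \ref{seclu} for any $[a,b]\subseteq [0,1]$. For the infimum we exploit that $U_r := \{y \in [0,1] : f(y) < r\}$ is open (as $f$ is usco), so $\Phi_2$ yields an RM-code for $U_r$ from its characteristic function, and then $\exists^2$ decides whether $[a,b] \cap U_r = \emptyset$, i.e.\ whether $\inf_{y \in [a,b]} f(y) \geq r$; iterating over rational $r$ gives the infimum to arbitrary precision. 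With sup and inf on closed subintervals in hand, we define $F(x,k)$ as follows: if $x \in C_f$, a condition which is $\exists^2$-decidable for usco $f$ through the $\liminf$ computation used for $\Phi_{14}$ in Theorem \ref{seclu}, then $F(x,k)$ is the least $m$ for which $\sup_{y \in B_m} f(y) \leq f(x) + 1/2^{k+1}$ and $\inf_{y \in B_m} f(y) \geq f(x) - 1/2^{k+1}$, with $B_m := [\max(0, x - 1/2^m), \min(1, x + 1/2^m)]$; otherwise we set $F(x,k) := 0$ arbitrarily. Continuity of $f$ at $x \in C_f$ guarantees termination of the search.

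The main obstacle is the asymmetric handling of sup and inf for usco functions in the reverse direction: since $-f$ is lsco rather than usco, we cannot extract the infimum directly via $\Phi_3$ and must instead route through the open set $\{y : f(y) < r\}$ and $\Phi_2$. The forward direction is comparatively mild, resting on the observation that the modulus's `$1/2$' tolerance is tight enough, given the binary range of $\mathbb{1}_C$, to promote the approximate inequality $|f(y) - f(x)| < 1/2$ to the exact conclusion $f(y) = 0$, ensuring the ball $B(x, Y(x))$ avoids $C$ and thereby makes $Y$ a legitimate R2-representation.
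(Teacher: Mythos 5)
Your proposal is correct and follows essentially the same route as the paper: applying $M_{\usco}$ to $\mathbb{1}_{C}$ to obtain an R2-representation of the complement, feeding it to $\Delta$ and deciding emptiness of $C$ from the resulting RM-code, and conversely obtaining $\Delta$ trivially from $\Omega_{C}$ and building the modulus from $\exists^{2}$-decidability of `$x\in C_{f}$' together with suprema/infima over subintervals supplied by the cluster theorems. You merely spell out details the paper leaves implicit (the explicit least-$m$ search and the infimum via the open sublevel sets and $\Phi_{2}$), which does not change the argument.
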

\begin{proof}
First of all, let $C\subset [0,1]$ be closed and let $F_{C}:(\R\times \N)\di \N$ be a modulus of continuity for the usco function $\mathbb{1}_{C}$. 
Then $x_{0}\in O:= [0,1]\setminus C$ implies $x_{0}\in C_{\mathbb{1}_{C}}$ by definition. Moreover, $B(x_{0}, \frac{1}{2^{F(x_{0}, 2)}})\subset O$, as required for an R2-representation of $O$.  
Then $\Delta$ provides an RM-code for $C$, yielding $\Omega_{C}$ by the cluster theorems.  

\smallskip

Secondly, $\exists^{2}$ suffices to decide `$x\in C_{f}$' for usco functions (\cite{dagsamXIII}*{\S2} or \cite{samBIG2}*{Theorem 2.4}.), i.e.\ we can provide a trivial output for $x\in D_{f}$ and the following for $x\in C_{f}$:  
since $\Omega_{C}$ can compute the sup and inf of an usco function over a given interval by the cluster theorems, one readily defines $M_{\usco}$.
Since $\Omega_{C}$ computes RM-codes for \emph{any} closed set, $\Delta$ is trivially obtained. 
\end{proof}
Finally, we note that for any class $\Gamma$ of functions, if one can compute $\Omega_b$ from the continuity problem for $\Gamma$, and each $\Pi^0_2$-set can be represented as $D_f$ for some $f \in \Gamma$, then the continuity problem for $\Gamma$ is as hard as $ \exists^3$ (assuming \textsf{V=L}).

\subsubsection{A word on the Suslin functional}\label{trinix}
We sketch some computational equivalences for $\Omega_{C}$ assuming the Suslin functional instead of Kleene's quantifier $\exists^{2}$.  

\smallskip

First of all, we have established the cluster theorems for $\Omega_{C}$ in Section \ref{clute}, where we always assume $\exists^{2}$ in the background.
To motivate (the innocence of) this assumption, we point out that by Theorem \ref{tame}, $\Omega_{C}+\exists^{2}$ computes the same real numbers as $\exists^{2}$ alone.  
By contrast (and Footnote \ref{labour}), $\Omega_{C}+\SS^{2}$ computes $\SS_{2}^{2}$, and even $\exists^{3}$ if \textsf{V=L}. 
Thus, the Suslin functional is rather `explosive' when combined with $\Omega_{C}$ and thus not an innocent background assumption.

\smallskip

Secondly, despite the previous `warning paragraph', we have the following theorem which implies that functionals satisfying \eqref{lekal} are computationally equivalent to $\Omega_{C}$ \textbf{assuming} $\SS^{2}$.
\begin{thm}\label{pringke}
The combination $\Omega_{C}+\SS^{2}$ computes a functional with specification:
\be\label{lekal}
\textup{on input lsco $f:[0,1]\di \R$, decide if $f$ is continuous on $[0,1]$ or not.}
% for input any $f:[0,1]\di \R$ with oscillation function, decide if $f$ is cliquish or not on $[0,1]$.  
\ee
Given $\exists^{2}$, any functional satisfying \eqref{lekal} computes $\Omega_{C}$.  
%We can replace `usco' by `bounded Baire $1^{*}$'.
\end{thm}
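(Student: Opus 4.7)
For the forward direction, I plan to decompose the discontinuity set of a given lsco $f:[0,1]\di\R$ as $D_{f} = \bigcup_{q \in \Q,\, q>0} D_{f,q}$, where $D_{f,q} := \{x \in [0,1] : \limsup_{y\to x} f(y) \geq f(x)+q\}$, and to show that each $D_{f,q}$ is closed.  Indeed, if $x_{n}\to x$ with $x_{n}\in D_{f,q}$, then upper semi-continuity of the envelope $g(y) := \limsup_{z\to y} f(z)$ yields $g(x) \geq \limsup_{n} g(x_{n}) \geq \limsup_{n} f(x_{n}) + q$, while lower semi-continuity of $f$ gives $\limsup_{n} f(x_{n}) \geq \liminf_{n} f(x_{n}) \geq f(x)$, so $g(x)\geq f(x)+q$.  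Moreover, membership in $D_{f,q}$ is arithmetical in $f$ using $\exists^{2}$: since $-f$ is usco, the supremum of $f$ on any open ball equals the supremum over the rational points of that ball (the dual of the fact $\inf h = \inf_{q} h(q)$ for usco $h$, which was used in the proof of the first cluster theorem), whence $g(x) = \inf_{n} \sup\{f(q') : q' \in \Q \cap B(x, 1/2^{n})\}$ and the condition $g(x)\geq f(x)+q$ is $\Pi_{2}^{0}$ in $f$ once $\exists^{2}$ is available.  Therefore $\Omega_{C}$ decides emptiness of each closed $D_{f,q}$, and $\exists^{2}$ (which $\SS^{2}$ computes) combines the results over positive rational $q$; since $f$ is continuous on $[0,1]$ iff every $D_{f,q}$ is empty, $\Omega_{C}+\SS^{2}$ yields the required continuity decider.

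For the reverse direction, assume $\exists^{2}$ and any functional $\Psi$ satisfying \eqref{lekal}.  Given closed $C\subset[0,1]$, I set $f := 1 - F_{C}$, which is the characteristic function of the open set $[0,1]\setminus C$ and hence lsco.  Since $[0,1]$ is connected, any $\{0,1\}$-valued function on $[0,1]$ is continuous iff its support is clopen in $[0,1]$, iff $C\in\{\emptyset, [0,1]\}$.  Applying $\Psi$ to $f$: if $\Psi$ declares $f$ discontinuous then $C\neq\emptyset$; if $\Psi$ declares $f$ continuous then $C\in\{\emptyset, [0,1]\}$, and the single value $F_{C}(0)$ distinguishes the two cases (being $1$ in the latter and $0$ in the former).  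This computes $\Omega_{C}$.

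The delicate step will be the closedness argument for $D_{f,q}$ together with its effective membership test when $f$ may be unbounded above, since then $\limsup$ can equal $+\infty$; expressing the key comparisons through rational suprema and $\exists^{2}$ avoids any $\pm\infty$ bookkeeping and keeps the entire test arithmetical in $f$.  The minor edge case $C=[0,1]$ in the reverse direction is dispatched by the single evaluation $F_{C}(0)$, so no further complication arises.
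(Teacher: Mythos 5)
Your proof is correct, but the forward direction takes a genuinely different route from the paper, and in fact yields a sharper result. The paper's proof expresses ``$x\in D_{f}$'' arithmetically via the rational quantifier (using lower semi-continuity, exactly as you do), but then uses $\Omega_{C}$ only to replace $f$ by a second-order code, so that the remaining real quantifier in $(\exists x\in[0,1])(x\in D_{f})$ becomes a $\Sigma^{1}_{1}$-statement to be decided by $\SS^{2}$. You instead decompose $D_{f}=\bigcup_{q\in\Q^{+}}D_{f,q}$ into \emph{closed} sets whose characteristic functions are computable from $f$ and $\exists^{2}$, apply $\Omega_{C}$ (i.e.\ $\Phi_{1}$ from the first cluster theorem) to each $D_{f,q}$, and finish with a number quantifier; your closedness argument (the deleted $\limsup$ is usco for any $f$, and $-f$ is usco, so $D_{f,q}$ is a superlevel set of an usco function) is sound, and your handling of the possible value $+\infty$ and of the non-deleted rational supremum (harmless since $q>0$) is fine. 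This mirrors the paper's own treatment of $\Phi_{14}$ for usco functions in the second cluster theorem, and it shows that $\SS^{2}$ is not needed at all in the forward direction: the continuity decider for lsco functions is computable from $\Omega_{C}+\exists^{2}$, hence lies in the $\Omega_{C}$-cluster outright, which strengthens the stated theorem. What the paper's coding-plus-$\SS^{2}$ template buys is generality: it extends directly to the decision problems listed after the theorem (quasi-continuity, Lipschitz, differentiability, etc.), where the failure set need not decompose into closed sets with $\exists^{2}$-decidable membership. Your reverse direction (the lsco indicator $1-F_{C}$ of the complement of a closed set, with the case $C=[0,1]$ dispatched by evaluating $F_{C}(0)$) is essentially the paper's, which simply appeals to the cluster theorems.
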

\begin{proof}
Fix lsco $f:[0,1]\di \R$ and recall that $x\in C_{f}$ is decidable using $\exists^{2}$ by \cite{dagsamXIII}*{\S2} or \cite{samBIG2}*{Theorem 2.4}.
In particular, in case $x\in D_{f}$, then $f$ is not usco at $x$, i.e.\ 
\be\label{ting}\textstyle
(\exists l\in \N)(\forall N\in \N)(\exists z\in B(x, \frac{1}{2^{N}}))(f(z)\geq f(x)+\frac{1}{2^{l}}).
\ee
Note that we used the fact that $C_{f}$ consists of those points where $f$ is lsco and usco simultaneously.  
Since $f$ is lsco everywhere, \eqref{ting} is equivalent to:
\be\label{ting2}\textstyle
(\exists l\in \N)(\forall N\in \N)\underline{(\exists r\in B(x, \frac{1}{2^{N}})\cap \Q)}(f(r)\geq f(x)+\frac{1}{2^{l}}),
\ee
where the underlined quantifier is essential.   Now, `$x\in D_{f}$' is equivalent to \eqref{ting2} and $\Omega_{C}$ allows us to replace 
$f$ in \eqref{ting2} by second-order codes by the cluster theorems.  Hence, $(\exists x\in [0,1])(x\in D_{f})$ is decidable using $\SS^{2}$. 
The second part is immediate by the above cluster theorems.  
\end{proof}
Finally, the interested reader may verify that the following functionals are computationally equivalent to $\Omega_{C}$ assuming $\SS^{2}$.  
%\begin{thm}
%The following are in the $\Omega_{C}$-super-cluster.
\begin{itemize}
\item Any functional that on input usco $f:[0,1]\di \R$, decides if $f$ is continuous on $[0,1]$ or not. 
\item Any functional that on input usco $f:[0,1]\di \R$, decides if $f$ is differentiable on $(0,1)$ or not. 
\item Any functional that on input usco $f:[0,1]\di \R$, decides if $f$ is Lipschitz on $[0,1]$ or not.  
\item Any functional that on input usco $f:[0,1]\di \R$ and $\alpha>0$, decides if $f$ is $\alpha$-H\"older-continuous on $[0,1]$ or not.  
\item Any functional that on input usco $f:[0,1]\di \R$, decides if $f$ is absolutely continuous on $[0,1]$ or not.  
\item Any functional that on input usco $f:[0,1]\di \R$, decides if $f$ is quasi-continuous on $[0,1]$ or not.
%\item what about continuous differentiable?
%\item what about cadlag \(and the other approximable notions\)?
\end{itemize}
There are many function spaces between e.g.\ absolute continuity and quasi-continuity, many of which should yield similar results.  
We note that the characteristic function of closed sets is automatically cliquish, i.e.\ the latter provides an upper bound on the generality of what is computable by $\Omega_{C}+\SS^{2}$
\section{The computational landscape around $\Omega_{C}$}\label{dapa}
\subsection{Introduction}
In the previous section, we have identified numerous functionals stemming from mainstream mathematics that are computationally equivalent to $\Omega_{C}$ assuming $\exists^{2}$, giving rise to the $\Omega_{C}$-cluster.  
In this section, we investigate the connections between $\Omega_{C}$ and known functionals, an overview of which may be found in Figure \ref{xxx}. 
In particular, we establish the following results.  
\begin{itemize}
\item We show that $\Omega_{C}$ is \emph{lame} in Theorem \ref{tame}, i.e.\ the combination $\Omega_{C}+\exists^{2}$ computes the same real numbers as $\exists^{2}$.
\item We show that there is no total functional of type three intermediate\footnote{To be absolutely clear, Theorem \ref{hench} expresses that there is no total $\Phi^{3}$ such that: $\Omega_{C}+\exists^{2}$ computes $\Phi$ and $\Phi+\exists^{2}$ computes $\Omega$.} between $\Omega$ and $\Omega_{C}$ from the computational point of view (Theorem \ref{hench}).  
\item In contrast to the previous item, we identify a number of natural \emph{partial} functionals in Section \ref{betwi} that are intermediate between $\Omega$ and $\Omega_{C}$.
\item In Section \ref{CIT}, we study the computational properties of the \emph{Cantor intersection theorem} as the latter yields witnessing functionals intermediate between $\Omega_{C}$ and weak Cantor realisers.  
\item In Section \ref{LM}, we study the computational properties of the Lebesgue measure (restricted to closed sets) as it relates to $\Omega$ and $\Omega_{C}$.
\end{itemize}
Regarding the final item, it turns out that that the Lebesgue measure has a cluster theorem (Theorem \ref{lam}) similar to the cluster theorems for $\Omega_{C}$ from Section \ref{clute}, 
which we found somewhat surprising. 

\subsection{On the lameness of $\Omega_{C}$}\label{lameoc}
We show that the combination $\Omega_{C}+\exists^{2}$ does not compute more real numbers than $\exists^{2}$ alone.  
As an important consequence, $\exists^3$ is not computable in $\Omega_{C}+\exists^2$

\smallskip

First of all, we have introduced the following notion in \cite{dagsamXIII} in relation to $\Omega$. 
\bdefi\label{XGG}
A (partial) functional $\Phi^{3}$ is \emph{lame} if for all $x$ and $y$ of type 1, if $y$ is computable in $x+\Phi+\exists^2$, then $y$ is computable in $x+\exists^2$.
\edefi
We point out that $\Omega_{C}+\SS^{2}$ computes\footnote{We have that $\Omega_{C}$ computes $\Omega$ assuming $\exists^{2}$, and $\Omega+\SS^{2}$ computes $\SS_{2}^{2}$ by \cite{dagsamX}*{Theorem 4.3}.  The analogous result for $\exists^{3}$ is \cite{dagsamX}*{Theorem 4.6}. \label{labour}} $\SS_{2}^{2}$ and even $\exists^{3}$ if $\textsf{V=L}$, i.e.\ the former combination \emph{is} much stronger than $\SS^{2}$ alone.

\smallskip

Secondly, we need the following results where item (a) in Lemma \ref{Alain} is a special case of the Louveau separation theorem (\cite{Sacks.high}*{Corollary IV.6.2}). For the sake of completeness, we provide a full proof.
\begin{lemma}\label{Alain}~
\begin{itemize}
\item[(a)] If a hyperarithmetical set $X \subseteq \N^\N$ is closed, then $X$ has a hyperarithmetical RM-code as a closed set. 
\item[(b)] Further, if a closed set $X \subseteq 2^\N$ is non-empty and with a hyperarithmetical code, then $X$ contains a hyperarithmetical element.
\end{itemize}
\end{lemma}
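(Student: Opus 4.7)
For part (a), the most direct route is the Louveau separation theorem cited in the statement: since $X$ is hyperarithmetical (i.e.\ $\Delta_{1}^{1}$), both $X$ and $\N^{\N}\setminus X$ are $\Sigma_{1}^{1}$, they are disjoint, and they are trivially separated by the $\Pi_{1}^{0}$ set $X$ itself.  Louveau's theorem upgrades this to separation by a hyperarithmetical $\Pi_{1}^{0}$ set, which must coincide with $X$.  A hyperarithmetical $\Pi_{1}^{0}$ description of $X$ is exactly what it means for $X$ to admit a hyperarithmetical RM-code as a closed set.

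A more self-contained alternative would run through the canonical pruned tree $T_{X}:=\{\sigma\in \N^{<\N}:[\sigma]\cap X\ne \emptyset\}$, so that $X=[T_{X}]$ and $\N^{\N}\setminus X=\bigcup\{[\sigma]:\sigma \notin T_{X}\}$; then an RM-code for $X$ is hyperarithmetical if and only if $T_{X}$ is.  The predicate $\sigma \in T_{X}$ is visibly $\Sigma_{1}^{1}$ given any $\Delta_{1}^{1}$ presentation of $X$, and producing a matching $\Pi_{1}^{1}$ definition is where the closedness of $X$ is genuinely used, via effective uniformization or the hyperarithmetic selection principle for $\Sigma_{1}^{1}$ subsets of $\N^{\N}$.

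For part (b), I would work directly with the hyperarithmetical RM-code $(I_{n})_{n\in \N}$ for $2^{\N}\setminus X$.  Hyperarithmetically in this code one extracts the pruned tree $T_{X}\subseteq 2^{<\N}$: by compactness of $[\sigma]\subseteq 2^{\N}$, the condition $\sigma \notin T_{X}$ is equivalent to $[\sigma]$ being covered by finitely many of the $I_{n}$, which is $\Sigma_{1}^{0}$ in the code.  Since $X$ is non-empty and $T_{X}\subseteq 2^{<\N}$ is finitely branching, K\"onig's lemma forces every node of $T_{X}$ to lie on an infinite branch.  The leftmost-branch construction then yields $f\in [T_{X}]=X$ by recursively setting $f(n):=0$ when the subtree of $T_{X}$ rooted at $\langle f(0),\ldots,f(n-1),0\rangle$ is infinite, and $f(n):=1$ otherwise; infiniteness of such a subtree is arithmetical in the code, so $f$ is hyperarithmetical and lies in $X$ by construction.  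I expect part (a)---specifically obtaining the $\Pi_{1}^{1}$ side of $T_{X}$ in the direct approach---to be the main obstacle, which is precisely why invoking Louveau separation is attractive; part (b) reduces to the standard leftmost-path trick once the code is in hand.
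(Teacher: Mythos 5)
Your main line of argument goes through, but it is worth separating the three strands. For item (b), your leftmost-branch construction is essentially the paper's own (one-line) proof: from a hyperarithmetical code the lexicographically least element of a non-empty closed $X\subseteq 2^\N$ is definable arithmetically in the code, and your compactness observation (that ``$[\sigma]\cap X=\emptyset$'' is $\Sigma^0_1$ in the code, so the pruned tree is arithmetical in it) is just the spelled-out version of that. For item (a), your primary route simply invokes Louveau's separation theorem; this is correct (with $A=X$, $B=\N^\N\setminus X$ a partition, any separator is $X$ itself, so Louveau hands you a hyperarithmetical $\Pi^0_1$-code), and the paper explicitly notes that (a) is a special case of Louveau. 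The difference is that the paper deliberately gives a self-contained proof instead: from closedness one writes $(\forall f)(\exists n)(\forall g)(\overline f(n)=\overline g(n)\rightarrow g\notin X\vee f\in X)$, applies $\Pi^1_1$-uniformisation over $\N$ (Kreisel selection) to get a hyperarithmetical modulus $F$, and then separates the two disjoint $\Sigma^1_1$ sets of sequence numbers $\{\overline f(F(f)): f\notin X\}$ and $\{t: t\prec f\in X\}$ by a hyperarithmetical set $C$, which codes the complement of $X$. So what you buy by citing Louveau is brevity; what the paper buys is a proof from much lighter tools (number-uniformisation and $\Sigma^1_1$-separation on $\N$ rather than the full Louveau machinery).

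Your ``more self-contained alternative'' for (a), however, contains a genuine error: the canonical pruned tree $T_X=\{\sigma:[\sigma]\cap X\neq\emptyset\}$ of a closed hyperarithmetical subset of $\N^\N$ need \emph{not} be hyperarithmetical, and no uniformisation or selection principle will make it so. Already for $X=[T]$ with $T$ a recursive tree, the predicate ``$T$ has an infinite branch extending $\sigma$'' is in general properly $\Sigma^1_1$ (indeed $\Sigma^1_1$-complete), so $T_X$ can fail to be $\Delta^1_1$ even though $X$ is lightface $\Pi^0_1$. Consequently your claimed equivalence ``an RM-code for $X$ is hyperarithmetical if and only if $T_X$ is'' holds only in one direction: a hyperarithmetical $T_X$ yields a hyperarithmetical code, but a hyperarithmetical code only makes $T_X$ $\Sigma^1_1$ in the code. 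The correct moral — and the point of the paper's separation argument — is that the hyperarithmetical code one produces is in general a \emph{non-canonical} one, obtained by separating the $\Sigma^1_1$ data coming from $X$ and its complement, not by computing the canonical pruned tree. (This is also why (b) is restricted to $2^\N$: there compactness makes the pruned tree arithmetical in the code, whereas the analogue of (b) for $\N^\N$ is false, as the paper remarks.)
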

\begin{proof} Item (b) is trivial, since from the code we can define the least element of $X$,  in the lexicographical ordering, by an arithmetical formula.  To prove (a), 
let $X \subseteq \N^\N$ be closed and hyperarithmetical. Then
\[
(\forall f^{1} )(\exists n^{0})( \forall g^{1}) (\overline f(n) = \overline g(n) \rightarrow g \not \in X \vee f \in X)
\]
By $\Pi^1_1$-uniformisation over $\N$, there is a hyperarithmetical function $F$ such that
\[
(\forall f^{1}  ,  g^{1}) (\overline f(F(f)) = \overline g(F(f)) \rightarrow g \not \in X \vee f \in X)
\]
We now consider two disjoint $\Sigma^1_1$-sets of sequence numbers as follows:
\[
A:= \{s : (\exists f^{1}) (f \not \in X \wedge s = \overline f(F(f)))\} \textup{ and } B:= \{t : (\exists f^{1}) (f \in X \wedge t \prec f)\}.
\]
By $\Sigma^1_1$-separation over $\N$, there is a \emph{hyperarithmetical} set $C$ of sequence numbers containing $A$ and disjoint from $B$.  This set $C$ then defines a code for $X$, since the complement of $X$ is the union of the neighbourhoods defined from the elements of $C$, as required for item (a).
 \end{proof}
We have proved item (a) in Lemma \ref{Alain} for $\N^\N$, but we cannot prove item (b) for $\N^\N$, mainly because it is false.

\smallskip
\noindent
Thirdly, we can now establish that $\Omega_{C}$ is weak in the sense of Def.\ \ref{XGG}.
\begin{theorem}\label{tame}
The functional $\Omega_{C}$ is lame.
\end{theorem}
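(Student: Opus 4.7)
The plan is to show that any type-$1$ object $y$ computable in $x+\Omega_{C}+\exists^{2}$ is already computable in $x+\exists^{2}$. The strategy is to take a terminating Kleene--Platek computation witnessing $y\leq x+\Omega_{C}+\exists^{2}$ and to replace every call to $\Omega_{C}$ by an $\exists^{2}$-computation operating on a hyperarithmetic RM-code supplied by Lemma \ref{Alain}(a). Throughout, I would freely use the classical identification of type-$1$ objects computable in $z+\exists^{2}$ with those that are hyperarithmetic in $z$.

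The argument would proceed by transfinite induction on the rank of the computation tree, which is well-founded because the computation terminates. The inductive claim is strengthened to handle type-$2$ intermediate objects: at every node, (i) whenever a type-$1$ value $u$ is produced, $u$ is hyperarithmetic in $x$ together with the type-$1$ parameters already in scope; and (ii) whenever a type-$2$ functional $F$ is supplied to a higher-order oracle, $F$ is hyperarithmetic in those parameters, uniformly in its type-$1$ argument. The base clauses (S1--S2) and the compositional clauses (S3--S8) are routine since hyperarithmeticity is closed under arithmetical operations.

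The decisive step is the S9-style call $\Omega_{C}(F_{X})$, where $F_{X}\colon 2^{\N}\to\{0,1\}$ is the characteristic function of a closed $X\subseteq 2^{\N}$. The sub-computations producing $F_{X}(z)$, for each $z\in 2^{\N}$, have strictly smaller rank than the enclosing node, so by induction $F_{X}$ is hyperarithmetic in $x$ uniformly in $z$; hence $X$ is a hyperarithmetic-in-$x$ closed subset of $2^{\N}$. Lemma \ref{Alain}(a) then produces a hyperarithmetic-in-$x$ RM-code $C_{X}$ for $X$. Given $C_{X}$, checking whether $X$ is empty is arithmetical in $C_{X}$ and so handled by $\exists^{2}$, yielding $\Omega_{C}(X)$ from $x+\exists^{2}$. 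Calls to $\exists^{2}$ itself pose no difficulty.

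The main obstacle I anticipate is securing the uniformity-in-$z$ clause for the type-$2$ argument $F_{X}$ at an $\Omega_{C}$ node: the inductive hypothesis must simultaneously bound all sub-computations $F_{X}(z)$ as hyperarithmetic in $x$ in a single uniform way. This is precisely the content built into the S9-rank convention in the Platek-style calculus of Section~\ref{lambdaz}, where the node computing $\Omega_{C}(F_{X})$ has rank strictly exceeding every $F_{X}(z)$ sub-node, so uniformity is automatic from the shared Kleene index of $F_{X}$. Once this is in place, substituting each $\Omega_{C}$ call in the original computation by its $\exists^{2}$-based surrogate gives a computation of $y$ from $x+\exists^{2}$, establishing lameness.
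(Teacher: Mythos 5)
Your overall architecture (induction on the well-founded computation tree, treating only the $\Omega_{C}$-nodes, invoking hyperarithmeticity in the parameters) is the same as the paper's, but there is a genuine gap at the decisive step. You treat Lemma \ref{Alain}(a) as if it \emph{supplied} the RM-code $C_{X}$ inside your $x+\exists^{2}$ surrogate: ``Lemma \ref{Alain}(a) then produces a hyperarithmetic-in-$x$ RM-code $C_{X}$ \dots given $C_{X}$, checking whether $X$ is empty is arithmetical in $C_{X}$.'' The lemma only asserts the \emph{existence} of such a code; it gives no procedure, uniform in the data actually available at the node (namely the term for the characteristic function of $X$ and the parameters), for obtaining one. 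This matters because lameness requires a \emph{single} computation of the type-1 object $y$ from $x+\exists^{2}$: knowing that at each node the relevant bit is, non-uniformly, computable in $x+\exists^{2}$ does not assemble into a computation of $y$. Worse, the obvious uniform version of the step you need -- a functional that takes the characteristic function of a closed $X\subseteq 2^{\N}$ and returns an RM-code -- is precisely $\Phi_{7}$ (equivalently $\Phi_{2}$) of Theorem \ref{ficlu}, which lies in the $\Omega_{C}$-cluster and is therefore \emph{not} computable from $\exists^{2}$; so the surrogate as you describe it would itself need $\Omega_{C}$. The obstacle you flag (uniform hyperarithmeticity of the subcomputations $F_{X}(z)$) is indeed harmless; the real uniformity problem, which you do not address, is in passing from $F_{X}$ to a code or to an emptiness decision.

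The paper's proof avoids ever computing a code. It uses \emph{both} parts of Lemma \ref{Alain}: part (a) to get a hyperarithmetical code and part (b) to conclude that a non-empty hyperarithmetical-in-$\vec x$ closed $X\subseteq 2^{\N}$ contains an element hyperarithmetical in $\vec x$. Hence $X=\emptyset$ is equivalent to ``no $z\in\textsf{HYP}(\vec x)$ lies in $X$'', which together with the trivial $\Pi^{1}_{1}$ form of emptiness makes the question $\Delta^{1}_{1}(\vec x)$ \emph{uniformly} in (the term for) the characteristic function, and therefore decidable by $\exists^{2}$ at every $\Omega_{C}$-node of the replacement term $t^{*}$, which is produced once and for all via the recursion theorem and verified by induction on the tree of Theorem \ref{thm5.7}. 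You could repair your argument either by switching to this basis-theorem criterion (your compactness observation is essentially what proves part (b), so you are close), or by proving an index-uniform, effective version of Lemma \ref{Alain}(a) (effective $\Sigma^{1}_{1}$-separation and $\Pi^{1}_{1}$-uniformisation applied to a $\Pi^{1}_{1}$ normal form of the $\exists^{2}$-computation of the characteristic function); but some such uniformisation must be supplied, and as written your proof does not contain it.
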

\begin{proof}
The proof is based on the relativised version of Lemma \ref{Alain} and the recursion theorem.  Indeed, we observe that whenever we have a terminating term $t[\Omega_{C}, \exists^2,\vec x]$, where $\vec x$ is a sequence of objects of type 0 or 1, we can replace this by an equivalent term $t^*[\exists^2,\vec x]$. As proved in \cite{Pla}, he recursion theorem is valid in our setting, and we use Theorem \ref{thm5.7} to show that $t^*[\exists^2,\vec x]$ terminates whenever $t[\Omega_{C}, \exists^2,\vec x]$ terminates.

\smallskip

The only case where we need to do something is the application of $\Omega_{C}$ to a characteristic function $
\lambda z^{1}.s[\Omega_{C},\exists^2,\vec x]$; by the assumption of termination this is the characteristic function of a compact set $X \subseteq 2^\N$. 
By the induction hypothesis, we have that $\lambda z^{1}.s^*[\exists^2,\vec x]$ is (also) the characteristic function of $X$, so $X$ is hyperarithmetical in $\vec x$. 
By Lemma \ref{Alain} we have that 
\[
X = \emptyset  \leftrightarrow \big(\forall z \in \textsf{HYP}(\vec x)\big)(z \not \in X), 
\] 
which is clearly $\Delta_{1}^{1}$. 
We can thus use $\exists^2$ to decide if $X$ is empty or not.
\end{proof}
We now discuss a (slight) generalisation $\Omega_{C}$ which is no longer lame. 
Generalisations and variations of $\Omega_{C}$ are discussed in detail in Section \ref{hopen}.
\begin{remark}[From Cantor to Baire space]\rm
In contrast to the lameness of $\Omega_{C}$, the partial functional $\Phi$ that on input any closed $X\subset \N^\N$, decides $X=\emptyset$, is much stronger.  
Indeed, $\Phi$ computes \emph{both} $\Omega_{b}$ and the Suslin functional $\SS^2$, and thus $\SS^{2}_{2}$ by \cite{dagsamX}*{Theorem 4.3}.  Assuming $\textsf{V=L}$, $\Phi$ even computes $\exists^3$ by \cite{dagsamX}*{Theorem 4.6}. For the same reason, testing the emptiness of a $G_\delta$-subset of $2^\N$ is too strong to be of interest. As we are moving into independence results, we will not pursue this aspect further in this paper.
\end{remark} 
In \cite{dagsamXIII} we proved that $\Omega_{b}$ is not computationally equivalent, modulo $\exists^2$, to any total functional of type 3. The same will hold for $\Omega_{C}$, with only minor modifications to the proof. The modifications actually give us a slightly stronger theorem, a theorem that we prove just by pointing to the modifications needed in the proof of \cite{dagsamXIII}*{Theorem 3.46}.
\begin{thm}\label{hench}
There is no total functional $\Phi$ of type 3 such that
\begin{itemize}
\item $\Phi$ is computable in $\Omega_{C}$ and $\exists^2$,
\item $\Omega_{b}$ is computable in $\Phi$ and $\exists^2$.
\end{itemize}
\end{thm}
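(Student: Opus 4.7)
The plan is to adapt the proof of \cite{dagsamXIII}*{Theorem 3.46}, which established the analogous non-equivalence for $\Omega_b$ rather than $\Omega_C$, and to observe that the modifications yield the slightly stronger ``intermediate'' statement for free. Assume toward a contradiction that a total type-3 functional $\Phi$ exists satisfying both bulleted properties.

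The first step is to leverage the lameness of $\Omega_C$ in a parametrized form. Since $\Phi$ is computable from $\Omega_C + \exists^2$, an argument parallel to Theorem \ref{tame}, based on the computation-tree characterization of Theorem \ref{thm5.7} together with Lemma \ref{Alain}(a), converts each internal $\Omega_C$-call made during the evaluation of $\Phi(F)$ into a $\Delta_1^1$-test on a hyperarithmetic RM-code of a closed set, which $\exists^2$ then decides. The consequence is that for any type-2 input $F$ and type-1 parameters $\vec{x}$, the value $\Phi(F)\in \N$ is computable from $F + \vec{x} + \exists^2$ alone. Lemma \ref{Alain}(a) is essential here precisely because it handles arbitrary hyperarithmetic closed sets rather than only singletons, and this is what permits the strengthening from ``$\Phi$ equivalent to $\Omega_b$'' to ``$\Phi$ intermediate between $\Omega_C$ and $\Omega_b$'' with essentially the same proof.

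Next, I would fix an index $e$ witnessing $\Omega_b$ computable in $\Phi + \exists^2$ (on inputs $\chi_X$ with $|X|\leq 1$) and run the forcing/diagonalisation construction of \cite{dagsamXIII}*{Theorem 3.46} to produce $f\in 2^\N$ such that $\{e\}^{\Phi,\exists^2}(\chi_{\{f\}})$ and $\{e\}^{\Phi,\exists^2}(\chi_\emptyset)$ agree, contradicting $\Omega_b(\{f\}) = 1 \neq 0 = \Omega_b(\emptyset)$. The construction diagonalises against the finitely many ``critical'' sub-computations of the normalised evaluation: by the first step, each such sub-computation reduces to a computation using only $\chi_{\{f\}} + \exists^2$, and a sufficiently non-hyperarithmetic $f$ can be selected so that all of these return identical values on $\chi_{\{f\}}$ and on $\chi_\emptyset$. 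The main obstacle, and the reason the modifications are non-trivial, is maintaining the uniformity of the first step through nested applications of $\Phi$ inside the purported reduction of $\Omega_b$; this is handled, as in \cite{dagsamXIII}, by a recursion along the computation tree (invoking the recursion theorem valid in our setting by \cite{Pla}), with Lemma \ref{Alain}(a) swapped in wherever the original argument used its singleton-only analogue.
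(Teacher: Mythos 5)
Your overall plan---adapt the proof of \cite{dagsamXIII}*{Theorem 3.46}, swapping in Lemma \ref{Alain} for the singleton fact and exploiting the lameness of $\Omega_{C}$---is indeed the route the paper takes. The problem is the one place where you supply actual content, namely the ``parametrized lameness'' step. The lameness argument (Theorem \ref{tame}) works only because \emph{all} parameters are of type $0$ or $1$: it is this restriction that makes the characteristic function $\lambda z.s[\Omega_{C},\exists^2,\vec x]$ hyperarithmetical in $\vec x$, so that Lemma \ref{Alain} supplies a hyperarithmetical RM-code and emptiness becomes a $\Delta^1_1$-test decidable by $\exists^2$. When you evaluate $\Phi(F)$ for an \emph{arbitrary} type-2 argument $F$, the closed sets handed to $\Omega_{C}$ are defined by terms containing $F$; they are in general not hyperarithmetical in the type-1 data, no hyperarithmetical code is available, and the $\Delta^1_1$-replacement breaks down. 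Note also what your claim would say if read uniformly: it asserts that $\Phi$ itself is computable in $\exists^2$, whence $\Omega_{b}\leq\Phi+\exists^2\leq\exists^2$, contradicting the known fact that $\Omega_{b}$ is not computable in any type-2 functional---so the theorem would follow in one line and your subsequent diagonalisation would be superfluous. Read non-uniformly (each value $\Phi(F)$ is a single integer) the claim is vacuous. Either way it cannot carry the weight you place on it, and it is precisely not ``what permits the strengthening''.

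The paper's own proof is a precise list of modifications to \cite{dagsamXIII}: reformulate \cite{dagsamXIII}*{Lemma 3.42} with $\Omega_{C}$ in place of $\Omega_{b}$, the crucial point being that a \emph{non-empty hyperarithmetical closed subset of $2^\N$ contains a hyperarithmetical element} (so part (b) of Lemma \ref{Alain} is as essential as part (a), which is the only part you invoke); reformulate \cite{dagsamXIII}*{Theorem 3.43} to the key intermediate statement that \emph{no total extension of $\Omega_{b}$ is computable in $\Omega_{C}+\exists^2$}; adapt \cite{dagsamXIII}*{Lemma 3.44} using the lameness of both $\Omega_{b}$ and $\Omega_{C}$; and then run the proof of \cite{dagsamXIII}*{Theorem 3.46} via Lemma 3.45 and Theorem 3.41 unchanged. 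Your second paragraph gestures at the right diagonalisation (choose $f$ sufficiently non-hyperarithmetical so that the two runs of the reduction on $\chi_{\{f\}}$ and $\chi_{\emptyset}$ agree), but the mechanism forcing the internal applications of the \emph{total} functional $\Phi$ to return equal values on the type-2 arguments generated in the two runs is exactly the content of those intermediate results, which your first step was meant to replace and does not. As it stands, the proposal asserts the conclusion of the hard part rather than proving it.
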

\begin{proof}
We follow and modify the steps in the proof of \cite{dagsamXIII}*{Theorem 3.46} as follows. 
\begin{enumerate}
\item We reformulate \cite{dagsamXIII}*{Lemma 3.42} writing $\Omega_{C}$ instead of $\Omega_{b}$. 
The proof remains the same, where we at the crucial step use that a non-empty hyperarithmetical closed subset of $2^\N$ contains a hyperarithmetical element instead of the same for singletons.
\item We reformulate \cite{dagsamXIII}*{Theorem 3.43} to:
\begin{center}
{\em there is no total extension of $\Omega_{b}$ that is computable in $\Omega_{C} + \exists^2$.} 
\end{center}
The proof of the reformulation remains the same.
\item In \cite{dagsamXIII}*{Lemma 3.44} we replace $\Omega_{b}$ with $\Omega_{C}$. The proof remains the same, given the proofs of the lameness of $\Omega_{b}$ and $\Omega_{C}$. 
\item The next step, \cite{dagsamXIII}*{Lemma 3.45}, remains unchanged.
\item Now we can prove our theorem using the proof of \cite{dagsamXIII}*{Theorem 3.46}, using \cite{dagsamXIII}*{Theorem 3.41} in the same way as in \cite{dagsamXIII}.
\end{enumerate}
Together with the proof of \cite{dagsamXIII}*{Theorem 3.46}, this proof is now complete.  
\end{proof}

\subsection{Between $\Omega$ and $\Omega_{C}$}\label{betwi}
In this section, we discuss some natural functionals that exist between $\Omega$ and $\Omega_{C}$, based on mainstream notions like Riemann integration.

\smallskip

First of all, to decide basic properties of a given function $f:[0,1]\di \R$, like boundedness or Riemann integrability, one needs (exactly) Kleene's $\exists^{3}$.  
We now show that assuming an oscillation function as input, the decision procedure is more tame, namely at most $\Omega_{C}$.  
We allow the special value `$+\infty$' for the oscillation function and supremum in the following theorem.  
\begin{thm}\label{kahi}
The following are computable in $\Omega_{C}+\exists^{2}$:
\begin{itemize}
\item any functional that on input $f:[0,1]\di \R$ and its oscillation function, decides if $f$ is Riemann integrable on $[0,1]$ or not, 
\item any functional that on input $f:[0,1]\di \R$ and its oscillation function, decides if $f$ is bounded on $[0,1]$ or not, 
\item any functional that on input $f:[0,1]\di \R$ and its oscillation function, outputs the supremum of $|f|$.
\end{itemize}
Given $\exists^{2}$, any of these functionals computes $\Omega$.  
\end{thm}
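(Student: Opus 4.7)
The plan is to treat each item separately, exploiting that the oscillation function $\osc_f$ is always upper semi-continuous, so that the cluster theorems of Section \ref{clute} apply to it directly.

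For the forward direction (computability in $\Omega_{C}+\exists^{2}$), I first handle boundedness: $f$ is bounded on $[0,1]$ iff $\osc_f$ is bounded, the nontrivial direction being a compactness argument with $\exists^{2}$ (cover $[0,1]$ by finitely many balls on each of which $f$ varies by at most $\osc_f+1$). Since $\osc_f$ is usco, $\Phi_{25}$ from Theorem \ref{seclu} decides boundedness. For Riemann integrability, combine this with Lebesgue's criterion: $f$ is R-integrable iff bounded and $D_f$ has measure zero, where $D_f=\bigcup_k D_k$ with $D_k=\{x:\osc_f(x)\ge 1/2^k\}$ closed (as $\osc_f$ is usco). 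Use $\Phi_{2}$ from Theorem \ref{ficlu} to obtain RM-codes for each $D_k$, then $\exists^2$ decides measure zero. For the supremum of $|f|$, if $\osc_f$ is unbounded output $+\infty$; otherwise $f$ is bounded and I reduce to computing $\sup f$ and $\sup(-f)$. Using that $f$ is continuous on $C_f$ (so $f(x)$ is there a limit of rational values), one has $\sup f=\max\bigl(\sup_{q\in\Q\cap[0,1]}f(q),\,\sup_k\sup_{y\in D_k}f(y)\bigr)$; the rational part is handled by $\mu^{2}$, and for each closed $D_k$ one computes the restriction of the usco envelope $\bar f^{*}(y):=\limsup_{z\to y}f(z)$ to $D_k$ via a fixed-point iteration combining $\exists^{2}$ with the closed structure of the $D_{k'}$ provided by $\Omega_{C}$, then invokes $\Phi_{3}$ or $\Phi_{9}$ from Theorem \ref{ficlu} to extract $\sup_{D_k}f=\sup_{D_k}\bar f^{*}$.

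For the reversal direction, given $X\subseteq 2^{\N}$ with $|X|\le 1$ and $X'\subset[0,1]$ its image under binary expansion, I use tailored constructions. For item 3, take $f:=\mathbb{1}_{X'}$: a direct case analysis gives $\osc_f=f$, both are computable from the characteristic function $F_{X}$ via $\exists^{2}$, and $\sup|f|\in\{0,1\}$ decides emptiness, yielding $\Omega_{b}$ and hence $\Omega$ modulo $\exists^{2}$. For item 1, take a translate-of-Dirichlet construction: $f_X(x):=1$ iff $\exists q\in\Q:F_{X'}(x-q)=1$, which vanishes when $X=\emptyset$ and equals $\mathbb{1}_{y_0+\Q}$ (dense, nowhere continuous) when $X=\{y_0\}$, hence is nowhere R-integrable. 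For item 2, use an unbounded variant $g_X(x):=\sup\{n:F_{X'}(x-q_n)=1\}$ for an enumeration $(q_n)$ of $\Q$, which equals $0$ when $X=\emptyset$ and is unbounded on the dense set $y_0+\Q$ when $X\ne\emptyset$. In each case the oscillation function is packaged by adding a fixed Dirichlet-type background so that $\osc_{f_X}$ is computable from $F_X+\exists^{2}$ without circular reference to the unknown emptiness of $X$.

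The main obstacle I expect is the forward direction of item 3: the usco envelope $\bar f^{*}$ is not pointwise determined by $f$ and $\osc_f$ alone, and $f|_{D_k}$ itself carries no usco structure, so one must use $\Omega_{C}$ in a genuinely nontrivial way to assemble $\bar f^{*}|_{D_k}$ by iteration over the closed sets $D_{k'}$. A secondary subtlety is ensuring, in the reversal for items 1 and 2, that the oscillation function supplied with $f_X$ does not secretly encode the emptiness question we are trying to decide.
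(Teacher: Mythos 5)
Your forward direction for the first two items is essentially the paper's argument (boundedness via emptiness of the superlevel sets of the usco function $\osc_f$, Riemann integrability via the Vitali--Lebesgue criterion and the measure of the closed sets $D_k$, decided through RM-codes supplied by $\Omega_C$ plus $\exists^2$), and your reversal for the third item ($f=\mathbb{1}_{X'}$ with $\osc_f=f$) is correct. The first genuine gap is the forward direction of the supremum functional, and you flag it yourself: the reduction to $\sup_{D_k}f$ is not completed. As written, with the punctured $\limsup$, the identity $\sup_{D_k}f=\sup_{D_k}\bar f^{*}$ is simply false (take $f=\mathbb{1}_{\{\alpha\}}$ for irrational $\alpha$: then $D_k=\{\alpha\}$, $\sup_{D_k}f=1$ while $\limsup_{z\to\alpha}f(z)=0$); with the unpunctured envelope the identity holds, but then $\bar f^{*}$ is exactly the object you concede is not pointwise determined by $f$ and $\osc_f$, and the appeal to ``a fixed-point iteration combining $\exists^{2}$ with the closed structure of the $D_{k'}$'' is a hope, not an argument. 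The paper does not route the supremum through envelopes on the $D_k$ at all: after the boundedness test it computes $\sup|f|$ directly by an interval-halving argument that only uses rational values of $f$ together with $\Omega_C+\exists^2$ applied to the usco function $\osc_f$.

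The second gap is in your reversals for items 1 and 2, and it is precisely the ``secondary subtlety'' you mention, unresolved. For $f_X=\mathbb{1}_{X'+\Q}$ one has $\osc_{f_X}\equiv 1$ when $X\neq\emptyset$ and $\osc_{f_X}\equiv 0$ when $X=\emptyset$, and similarly $\osc_{g_X}\equiv+\infty$ versus $\equiv 0$ for your unbounded variant; hence producing the second input required by the functional, namely the oscillation function, at even a single point already decides $X=\emptyset$, i.e.\ presupposes the $\Omega_b$ you are trying to compute. Your proposed repair of adding a fixed Dirichlet-type background cannot work: any background dense enough to make the oscillation locally computable in this way is itself nowhere continuous (respectively would have to be unbounded), so the resulting function fails Riemann integrability (respectively boundedness) in \emph{both} cases and the functional no longer separates $X=\emptyset$ from $X\neq\emptyset$. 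The paper avoids the circularity with one sparse construction, \eqref{geex}: spikes of height $2^{n}$ at the two points at distance $2^{-n}$ from the possible element of $X$, with oscillation function \eqref{geex2} ($+\infty$ on $X$, $2^n$ at the spikes, $0$ elsewhere). Both are computable from the characteristic function of $X$ with $\exists^{2}$ by purely local membership queries, and this single pair $(g_X,\osc_{g_X})$ witnesses all three reversals at once (unbounded, non-integrable, supremum $+\infty$ exactly when $X\neq\emptyset$), which is why the theorem explicitly allows the value $+\infty$. Replacing your dense constructions by such locally testable spike functions would repair your items 1 and 2.
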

\begin{proof}
To show that $\Omega_C$ computes the functional in the first item, we consider the following well-known facts from real analysis for $f:[0,1]\di \R$ and its oscillation function $\osc_{f}:[0,1]\di \R$.
\begin{itemize}
\item  $f$ is bounded on $[0,1]$ \emph{if and only} if for large enough $q\in \Q^{+}$, the closed set $E_q := \{x \in [0,1] : \osc_f(x) \geq q\}$ is empty.  
\item $f$ is continuous almost everywhere on $[0,1]$ \emph{if and only if} for all $k\in \N$, the closed set $D_k = \{ x \in [0,1]: \osc_f(x)\geq 1/2^k \}$ has measure zero.  
\end{itemize}
Since $\Omega_{C}$ can decide whether closed sets are non-empty and/or have measure zero\footnote{By the cluster theorems, $\Omega_{C}$ computes an RM-code for a closed set $C\subset [0,1]$.  Then $\exists^{2}$ can decide if this code completely covers $[0,1]$, as this is (equivalent to) an arithmetical statement.  Similarly, given the code, deciding whether the associated set has measure zero, is arithmetical.}, we are done.
Indeed, Riemann integrability is equivalent to boundedness plus continuity ae, due to the Vitali-Lebesgue theorem.  
The supremum can be found using the usual interval-halving technique.  

\smallskip

To show that the functional in the first item computes $\Omega_b$, let $X\subset \R$ have at most one element.   We now define $g_X:[0,1]\di \R$ which is such that  
\begin{center}
$X= \emptyset$ \emph{if and only if} $g_{X}$ is Riemann integrable on $[0,1]$.
\end{center}  
Indeed, define $g_X:[0,1]\di \R$ and its oscillation function as follows: 
\be\label{geex}
g_{X}(x):=
\begin{cases}
2^{n} & \textup{if $n\in \N$ is such that $x\pm\frac{1}{2^{n}}\in X$}\\
0 & \textup{otherwise}
\end{cases},
\ee
where the oscillation function $\osc_{g_X}$ is defined as follows:
\be\label{geex2}
\osc_{g_{X}}(x):=
\begin{cases}
+\infty & x\in X\\
2^{n} & \textup{if $n\in \N$ is such that $x\pm\frac{1}{2^{n}}\in X$}\\
0 & \textup{otherwise}
\end{cases},
\ee
These functions are as required and we are done with the first item.  The other items are treated in exactly the same way. 
\end{proof}
We note that the use of the Riemann integral is essential: the function $g_{X}$ from \eqref{geex} is always Lebesgue integrable in case $|X|\leq 1$.  
It is a natural question what is needed to obtain $\Omega_{C}$ from the itemised functionals in Theorem \ref{kahi}.  In this regard, it is an interesting exercise to show that the functional in the third item of Theorem~\ref{kahi} computes $\Omega_{C}$ restricted to \emph{nowhere dense} closed sets.  

\smallskip

We finish this section with a remark on $\Omega_{b}$. 
\begin{rem}[The $\Omega_{b}$-cluster]\rm
We have obtained many computational equivalences involving $\Omega_{b}$ in \cite{dagsamXIII}.  
Below, we list an addition to the $\Omega_{b}$-cluster as it carries both historical and contemporary interest. 

\smallskip

First of all, a function $H:\R\di \N$ is a \emph{height \(function\)} for $A\subset \R$ in case $A_{n}:= \{ x\in [0,1]: H(x)<n\}$ is finite for all $n\in \N$.  
As it turns out, height functions are essential for the development of the RM of the uncountability of $\R$ (\cite{samBIG}) and the Jordan decomposition theorem (\cite{dagsamX}).  
Height functions can be found in the modern literature, but also go back to Borel and Drach circa 1895 (see \cites{opborrelen3, opborrelen4, opborrelen5})

\smallskip

Secondly, consider the following functional that ouputs Borel's $\varphi$-function from \cite{opborrelen4} where Borel computes $\varphi$ for various countable sets, like the algebraic numbers.  
\begin{center}
\emph{Any $\Phi:(\R\di \R)^{2}\di (\N^{2}\di \R)$ that on input $A\subset \R$ and a height $H:\R\di \N$ for $A$, outputs $\Phi(A, H)=\varphi$ such that $(\forall x, y\in A)( |x-y|>\varphi(H(x), H(y)))$. }
\end{center}
As an exercise, the reader can verify that $\Omega_{b}+\exists^{2}$ computes such a functional, while any such functional computes $\Omega_{b}$, assuming $\exists^{2}$. 
\end{rem}

\subsection{The Cantor intersection theorem}\label{CIT}
In this section, we study the computational properties of the Cantor intersection theorem, especially in relation to $\Omega_{C}$. 

\smallskip

%\section{Another functional below $\Omega_{C}$}
%We study the following functionals below $\Omega_{C}$.  
%\bdefi
%Let $\Omega_{C,w}$ be defined exactly on input \textbf{non-empty} and closed $X\subset [0,1]$; for the latter, we have $\Omega_{C, w}(X)\in X$. 
%\edefi
%The following functional is based on the well-known Cantor intersection theorem
First of all, the following specification formalises the notion of witnessing functionals for the Cantor intersection theorem.  
\bdefi\label{keffer2}
Any functional $\Phi:((\N\times \R)\di \N)\di \R$ is a \emph{Cantor intersection functional} if for any sequence $(C_{n})_{n\in \N}$ of non-empty closed sets with $C_{n+1}\subseteq C_{n}\subseteq [0,1]$ for all $n\in \N$, 
we have $\Phi(\lambda n. C_{n})\in \cap_{n\in \N}C_{n}$.
\edefi
Secondly, we establish the implications in Figure \ref{xxx}, and some equivalences. 
\begin{thm}
Assuming $\exists^{2}$, the following are computationally equivalent.
\begin{itemize}
\item Any selection functional $\Omega_{C, w}$ for closed \textbf{non-empty} sets in $[0,1]$.
\item Any functional $\Phi_{1, w}$ which on input usco $f:[0,1]\di \R$ \textbf{and} the real $y=\sup_{x\in [0,1]}f(x)$, outputs $x\in [0,1]$ such that $f(x)=y$.
\item Any Cantor intersection functional.  
\end{itemize}
Any Cantor intersection functional computes a weak Cantor realiser, assuming $\exists^{2}$. 
\end{thm}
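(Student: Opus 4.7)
My plan is to close a short cycle of reductions modulo $\exists^{2}$ between $\Omega_{C,w}$, $\Phi_{1,w}$, and any Cantor intersection functional, and then handle the final weak Cantor realiser claim separately.

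For $\Omega_{C,w}\leftrightarrow \Phi_{1,w}$: given closed non-empty $C\subseteq [0,1]$, its characteristic function $\mathbb{1}_{C}$ is usco with $\sup \mathbb{1}_{C}=1$, and $\Phi_{1,w}(\mathbb{1}_{C},1)\in C$; conversely, for usco $f$ with $y=\sup f$ the level set $\{x\in [0,1]:f(x)\geq y\}$ is closed by upper semi-continuity and non-empty because usco functions attain their supremum on $[0,1]$ (compare $\Phi_{3},\Phi_{4}$ in Theorem \ref{ficlu}); its characteristic function is computable from $f$, $y$ and $\exists^{2}$, so $\Omega_{C,w}$ returns a maximiser. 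For the equivalence with a Cantor intersection functional, the constant sequence $C_{n}\equiv C$ yields one direction at once, while conversely the intersection $\bigcap_{n}C_{n}$ of a decreasing sequence of non-empty closed subsets of $[0,1]$ is itself closed, non-empty by the finite intersection property on the compact space $[0,1]$, and has a characteristic function decidable by $\exists^{2}$ (as a universal quantification over $\N$), so $\Omega_{C,w}$ applies.

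For the final claim, given a Cantor intersection functional $\Psi$ and weak-Cantor-realiser data $(A,Y)$, I first enumerate $A$: for each $n\in\N$, the set $B_{n}:=\{x\in [0,1]: x\in A\wedge Y(x)=n\}$ is a singleton by injectivity and surjectivity of $Y_{\upharpoonright A}$, hence closed and non-empty, and its characteristic function is computed directly from $A$ and $Y$; thus $a_{n}:=\Psi(\lambda m.B_{n})$ gives the unique element of $B_{n}$. I then recursively define nested closed intervals with $I_{0}:=[0,1]$ and $I_{n+1}$ chosen as whichever of the two outer closed thirds of $I_{n}$ does not contain $a_{n}$, the choice being decidable by $\exists^{2}$ via rational comparisons. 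Since $a_{n}\notin I_{m}$ for all $m>n$, the intersection $\bigcap_{n}I_{n}$ is disjoint from $A$, so $\Psi((I_{n})_{n\in\N})\in [0,1]\setminus A$, as required.

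The pivotal step is the preliminary enumeration of $A$ by $\Psi$: the classical nested-interval argument is routine once $A$ is enumerated, but we begin only with abstract oracle access to $(A,Y)$, and it is the observation that singletons are closed in $[0,1]$, combined with the constant-sequence trick for $\Psi$, that supplies the enumeration needed to drive the construction.
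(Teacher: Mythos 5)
Your proof is correct and follows essentially the same route as the paper: the $\Omega_{C,w}\leftrightarrow\Phi_{1,w}$ reductions via characteristic functions and level sets, the constant-sequence trick for one direction of the Cantor-intersection equivalence, and the singleton sets $\{x\in A: Y(x)=n\}$ followed by a diagonal argument for the weak Cantor realiser all coincide with the paper's argument. The only (harmless) divergence is in computing a Cantor intersection functional from $\Omega_{C,w}$: you apply $\Omega_{C,w}$ once to $\bigcap_{n}C_{n}$, whose characteristic function $\exists^{2}$ decides and which is closed and non-empty by compactness, whereas the paper selects a point from each $C_{n}$ and extracts a convergent subsequence using $\exists^{2}$; both work, and yours is marginally more direct, just as your explicit trisection construction merely spells out the paper's ``usual computable diagonal argument.''
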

\begin{proof}
First of all, that $\Omega_{C, w}$ computes $\Phi_{1, w}$ follows by considering the closed and non-empty set $\{x\in [0,1]:f(x)=y\} $ where $f$ is usco with supremum $y$.  
For the reversal, let $C\subset [0,1]$ be non-empty and closed.  Then $\Phi_{1, w}( \lambda x.\mathbb{1}_{C}(x), 1 )$ is an element of $C$ as characteristic functions of closed sets are usco. 

\smallskip

Secondly, to compute a Cantor intersection functional, let $(C_{n})_{n\in \N}$ be a sequence of non-increasing, closed, and non-empty sets in $[0,1]$.   Then $\Omega_{C, w}(C_{n})\in C_{n}$ for all $n\in \N$ by definition.  
Then $\exists^{2}$ computes a convergent sub-sequence with limit $z$ from the sequence $(\Omega_{C, w}(C_{n}))_{n\in \N}$; clearly, we have $z\in \cap_{n\in \N}C_{n}$, as required.  
That any Cantor intersection functional computes $\Omega_{C, w}$ is immediate by taking a constant sequence of the same closed non-empty set.  
The final sentence follows by considering the non-empty closed sets $E_{n}=\{x\in A: Y(x)=n \}$ where $Y$ is injective and surjective on $A\subset [0,1]$.  This yields an enumeration of $A$, and $y\not\in A$ is computed via the usual (computable) diagonal argument. 
\end{proof}
In contrast to $\Omega_{b}$, it is fairly easy to show that certain Cantor intersection functionals are strictly weaker than $\Omega_{C}$, following Theorem \ref{lef}.
The proof of the latter assumes Cantor's continuum hypothesis $\CH$, which was also assumed for results in \cite{dagsamIX}.  An important notion is \emph{countably based}, which we have studied in e.g.\ \cites{dagsamXIII}.  
\bdefi
Let the partial functional $\Phi^{\sigma}$ be of type $\sigma = \tau_1 , \ldots , \tau_n \rightarrow 0$, where $\tau_i = \delta_{i,1} , \ldots , \delta_{i,m_i} \rightarrow 0$ and each $\delta_{i,j}$ has rank $1$.  
We say that $\Phi$ is \emph{countably based} if, whenever $\Phi(F_1 , \ldots , F_n) \in \N$, there are countable subsets $X_i $ containing tuples of type $ \delta_{i,1} \times \cdots \times\delta_{i,m_i}$ such that $\Phi(F_1 , \ldots , F_n) = \Phi(G_1 , \ldots , G_n)$ whenever $\Phi(G_1 , \ldots , G_n) \in \N$ and  $F_i$ and $G_i$ are equal on $X_i$ for each $i = 1 , \ldots , n$.
\end{defi}
The \emph{total} countably based functionals were originally suggested by Stan Wainer and then studied by John Hartley in \cite{hartleycountable,hartjeS}.

\begin{thm}[$\CH$]\label{lef}
There exists a Cantor intersection functional $\Phi_{0}$ that is countably based.  
Hence, $\Phi_{0}+\exists^{2}$ cannot compute $\Omega_{C}$.
\end{thm}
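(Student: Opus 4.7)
\smallskip

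The plan is to use \CH{} directly to define $\Phi_{0}$ pointwise, by picking the least witness for membership in the relevant intersection along a fixed well-ordering of $[0,1]$ of order type $\omega_{1}$; this automatically makes the witness set of queries into $G$ countable.  By \CH, well-order $[0,1]$ as $(r_{\alpha})_{\alpha<\omega_{1}}$.  For a valid input $G:(\N\times \R)\di \N$ coding a descending sequence $(C_{n})_{n\in \N}$ of non-empty closed subsets of $[0,1]$, let $\alpha(G)<\omega_{1}$ be the least ordinal with $r_{\alpha(G)}\in \bigcap_{n\in \N}C_{n}$, which exists by Cantor's intersection theorem, and set $\Phi_{0}(G):=r_{\alpha(G)}$.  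This clearly defines a Cantor intersection functional in the sense of Def.\ \ref{keffer2}.

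\smallskip

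To verify that $\Phi_{0}$ is countably based, define the witness set $W(G)\subseteq \N\times \R$ to consist of the countably many queries $(n,r_{\alpha(G)})$ for $n\in \N$, together with, for each $\beta<\alpha(G)$, a single pair $(n_{\beta},r_{\beta})$ with $G(n_{\beta},r_{\beta})=0$ (for instance, with $n_{\beta}$ least).  Since $\alpha(G)<\omega_{1}$ is a countable ordinal, $W(G)$ is countable.  If $G'$ is any valid input agreeing with $G$ on $W(G)$, then $r_{\alpha(G)}\in \bigcap_{n\in \N}C_{n}^{G'}$ while $r_{\beta}\notin \bigcap_{n\in \N}C_{n}^{G'}$ for all $\beta<\alpha(G)$, which forces $\alpha(G')=\alpha(G)$ and hence $\Phi_{0}(G')=\Phi_{0}(G)$.

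\smallskip

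The main obstacle lies in deducing the final clause from the existence of this $\Phi_{0}$.  First, I would observe that $\Omega_{C}$ is itself \emph{not} countably based: for $F_{\emptyset}\equiv 0$ and any countable $W\subseteq \R$, one can pick $x_{0}\in [0,1]\setminus W$ and let $F':=\chi_{\{x_{0}\}}$, which is a valid closed-set characteristic function agreeing with $F_{\emptyset}$ on $W$, yet $\Omega_{C}(F')=1\ne 0=\Omega_{C}(F_{\emptyset})$.  It then remains to invoke the standard preservation principle (going back to Hartley \cite{hartleycountable,hartjeS} and applied in this form in \cite{dagsamXIII}) that any functional computable from a countably based functional together with $\exists^{2}$ is itself countably based; combined with the previous observation this gives that $\Phi_{0}+\exists^{2}$ cannot compute $\Omega_{C}$.
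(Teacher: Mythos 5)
Your proposal is correct and follows essentially the same route as the paper: a $\CH$-well-ordering of $[0,1]$ of order type $\omega_{1}$, selection of the $\prec$-least element of the intersection, countable basedness via the countably many predecessors (plus the queries pinning down membership), and the conclusion via closure of countably based partial functionals under Kleene computability together with $\Omega_{C}$ not being countably based. You are in fact slightly more explicit than the paper in spelling out the witness set of queries to $G$ and in proving that $\Omega_{C}$ fails to be countably based, which the paper simply asserts.
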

\begin{proof}
Assume $\CH$ and let $\prec$ be a well-ordering of $[0,1]$ of order-type $\aleph_1$. For all non-empty sets $A \subseteq [0,1]$, let $\Phi_{0}(A)$ be the $\prec$-least element of $A$.
Then $\Phi_{0}$ is a countably based partial functional because the formula
\[
x \in A \wedge (\forall y\in [0,1])( y \prec x \di y \not \in A).
\]
guarantees that $x = \Phi_{0}(A)$ without knowing anything more about $A$.
By \cite{dagsamXIII}*{Theorem 3.27}, the countably based partial functionals are closed under Kleene computability.  
Since $\Omega_{C}$ is not countably based, $\Phi_{0}+\exists^{2}$ cannot compute it.
\end{proof}
Finally, we note that $\Omega_{C}+\exists^{2}$ computes a witnessing functional for the countable Heine-Borel theorem as follows, studied in \cite{dagsamVII} in a different formulation.
%We believe that $\Omega_{C, w}+\exists^{2}$ (and $\Omega_{b}+\exists^{2}$) cannot compute such a witnessing functional.
\begin{princ}
For any sequence $(O_{n})_{n\in \N}$ of open sets such that $[0,1]\subset \cup_{n\in \N}O_{n}$, there is $n_{0}\in \N$  such that $[0,1]\subset\cup_{n\leq n_{0}}O_{n}$. 
\end{princ}
%\bdefi
%Any functional $\Phi:((\N\times \R)\di \N)\di \N$ is a \emph{countable Heine-Borel functional} if for any sequence $(O_{n})_{n\in \N}$ of open sets such that $[0,1]\subset \cup_{n\in \N}O_{n}$, $n_{0}=\Phi(\lambda n. O_{n})$ is such that $[0,1]\subset\cup_{n\leq n_{0}}O_{n}$. 
%%we have $\Phi(\lambda n. C_{n})\in \cap_{n\in \N}C_{n}\ne \emptyset$.
%\edefi
We also note that $\blambda_{C}+\exists^{2}$ can compute a witnessing functional for the countable Vitali covering theorem, as follows.
\begin{princ}
For any sequence $(O_{n})_{n\in \N}$ of open sets with $[0,1]\subset \cup_{n\in \N}O_{n}$ and any $k\in \N$, there is $n_{0}\in \N$  such that $\cup_{n\leq n_{0}}O_{n}$ has total length at least $1-\frac{1}{2^{k}}$. 
\end{princ}
%We believe that $\Omega_{C, w}+\exists^{2}$ (and $\Omega_{b}+\exists^{2}$) cannot compute a witnessing functional for Vitali's countable covering theorem either.
It would be interesting to know the exact relationship between these functionals. 
%\bdefi
%Any functional $\Phi:((\N\times \R)\di \N)\di \N$ is a \emph{countable Vitali functional} if for any sequence $(O_{n})_{n\in \N}$ of open sets such that $[0,1]\subset \cup_{n\in \N}O_{n}$ and $k\in \N$, $n_{0}=\Phi(\lambda n. O_{n})$ is such that $\cup_{n\leq n_{0}}O_{n}$ has total length at least $1-\frac{1}{2^{k}}$. 
%%we have $\Phi(\lambda n. C_{n})\in \cap_{n\in \N}C_{n}\ne \emptyset$.
%\edefi

\subsection{The Lebesgue measure}\label{LM}
We show that the Lebesgue measure (restricted to closed sets) boasts many computational equivalences (Section \ref{euiw})
and is computationally weak in that the former does not compute $\Omega_{b}$ given $\exists^{2}$ (Section \ref{leak})

\subsubsection{A cluster theorem for the Lebesgue measure}\label{euiw}
We establish the cluster theorem for the Lebesgue measure (restricted to closed sets) as in Theorem \ref{lam}.
To this end, we need some definitions from measure theory, as follows.

\smallskip

First of all, the notion of \emph{essential supremum} $\textup{\textsf{ess-sup}}_{x\in X}f(x)$ is well-known from analysis (see e.g.\ \cite{sobo}*{\S2.10}).  Intuitively, this notion is the supremum where one is allowed to ignore a measure zero set.  
\bdefi
A real $y\in \R$ is the \emph{essential supremum} of $f:[0,1]\di \R$ in case $\{ x\in [0,1]: f(x)\geq y\}$ has measure zero and $\{ x\in [0,1]: f(x)\geq y-\frac{1}{2^{k}}\}$ has positive measure for all $k\in \N$.
\edefi
Secondly, the following is an equivalent definition of Jordan measurability (see e.g.\ \cites{frink, taomes}) where the classical one expresses that the Jordan inner and outer content are equal; this content amounts to the definition of the Lebesgue measure in which only \emph{finite} interval coverings are allowed. 
\bdefi 
A set $E\subset[0,1]$ is \emph{Jordan measurable} if the boundary $\partial E$ has Lebesgue measure zero. 
\edefi
Singletons are of course Jordan measurable but $\Q$ is not, i.e.\ the countable additivity property for Jordan content fails for elementary examples.  

\smallskip

Thirdly, we have the following theorem where we assume that the functionals in Theorem~\ref{lam} are undefined outside of their domain of definition.  
We have refrained from mixing lambda-abstraction and Lebesgue measure as in $\lambda C. \blambda(C)$, which we recommend the reader do as well.
\begin{thm}[Cluster theorem for the Lebesgue measure] \label{lam}
The following are computationally equivalent given $\exists^{2}$. 
\begin{itemize}
\item \($\blambda_{C}$\) The Lebesgue measure $\blambda:(\R\di \R)\di \R$ restricted to closed $C\subset [0,1]$.
\item The functional $L_{0}$ that for closed $C\subset [0,1]$ decides whether $\blambda(C)=0$.
%\item The functional $L_{0, a}$ that for any closed $E\subset [0,1]$ decides whether $\blambda(\partial E)=0$.
%\item The functional $L_{0, b}$ that for closed $C\subset [0,1]$ outputs the Jordan content of $C$ if it exists, and $-1$ if it does not exist.
\item The functional $L_{0, c}$ that for usco $f:[0,1]\di \R$ decides if $f$ is zero ae. 
\item The functional $L_{1}$ that to usco $f:[0,1]\di \R$ outputs $\textup{\textsf{ess-sup}}_{x\in [0,1]}f(x)$.  
\item The functional $L_{2}$ that to closed $C\subset [0,1]$, outputs an RM-code of a closed set $D\subset C$ such that $\blambda(C\setminus D)=0$.  
\item The functional $L_{3}$ that to usco $f:[0,1]\di \R$ outputs $x\in [0,1]$ such that $f(x)=\textup{\textsf{ess-sup}}_{y\in [0,1]}f(y)$.  
\item The functional $L_{4}$ that for usco $f:[0,1]\di \R$, decides whether $f$ is continuous almost everywhere.  
\item The functional $L_{4, a}$ that for any $f:[0,1]\di \R$ and its oscillation function $\osc_{f}:[0,1]\di \R$ decides if $f$ is continuous almost everywhere. 
\item The functional $L_{5}$ that for usco $f:[0,1]\di [0,+\infty)$, decides whether $f$ is Riemann integrable.  
\item The functional $L_{5, c}$ that for usco $f:[0,1]\di \R$ decides if the Lebesgue integral $\int_{[0,1]}f$ is zero. 
\item The functional $L_{6}$ that for usco $f:[0,1]\di [0,+\infty)$, decides whether $f$ is finite almost everywhere.  
\item The functional $L_{7}$ that for usco $f:[0,1]\di [0,+\infty)$, decides whether $f$ is differentiable almost everywhere. 
\item The functional $J:(\R\di \R)\di \{0,1\}$ that for closed $C\subset [0,1]$, decides whether $C$ is Jordan measurable.  
\item The function $L_{8}$ which is the Urysohn functional weakened to continuous \emph{almost everywhere} outputs \(or: quasi-continuity ae\). 
%\item The function $L_{8}$ which is the Tietze functional weakened to continuous \emph{almost everywhere} outputs \(or: quasi-continuity ae\)???
\end{itemize}
\end{thm}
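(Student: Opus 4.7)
The plan is a hub-and-spoke proof with $\blambda_{C}$ at the centre: I aim to show both that $\blambda_{C}+\exists^{2}$ computes each listed functional, and that each listed functional combined with $\exists^{2}$ computes $\blambda_{C}$. The structure mirrors the cluster theorems for $\Omega_{C}$ in Section~\ref{clute}, but with ``non-emptiness'' replaced by ``positive measure''. The main workhorse is that for usco $f$, every super-level set $\{x : f(x) \geq q\}$ is closed, so that $\blambda_{C}$ extracts measure information from $f$ via these sets.

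For the direction $\blambda_{C}\to$ others, most reductions follow from classical real analysis. The essential supremum $L_{1}$ is computed as $\sup\{q\in\Q : \blambda(\{f \geq q\}) > 0\}$; the realiser $L_{3}$ is then located by interval-halving inside the positive-measure super-level set at the ess-sup value; the zero-a.e.\ tests $L_{0,c}, L_{5,c}$ check $\blambda(\{f \geq 1/2^{n}\})=0$ for all $n$. The continuity-a.e.\ tests $L_{4}, L_{4,a}$ exploit that $\osc_{f}$ is usco, so each $D_{k}=\{\osc_{f}\geq 1/2^{k}\}$ is closed. Riemann integrability $L_{5}$ reduces to continuity a.e.\ via Vitali--Lebesgue, using that a non-negative usco function on $[0,1]$ attains its sup and is thus bounded; $L_{6}, L_{7}$ reduce analogously to measure computations on closed level sets. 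Jordan measurability $J$ is immediate from computing $\blambda(\partial C)$; the RM-code $L_{2}$ is built by enumerating finite unions of closed rational intervals $K$ with $\blambda(K \cap C) > \blambda(C)-1/2^{n}$; the separator $L_{8}$ is realised by the standard $d(\cdot,C_{0})/(d(\cdot,C_{0})+d(\cdot,C_{1}))$, which is continuous everywhere.

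The linchpin of the reverse direction is $L_{0}\to\blambda_{C}$, for which I will use the identity
\[
\blambda(C) = \inf\bigl\{\blambda(O) : O \text{ a finite union of rational open intervals with } \blambda(C \setminus O) = 0\bigr\}.
\]
The lower bound is $\blambda(O) \geq \blambda(O\cap C) = \blambda(C)-\blambda(C\setminus O) = \blambda(C)$; the upper bound uses outer regularity plus compactness of $C$, which lets any rational open cover be refined to a finite subcover of measure near $\blambda(C)$. Since ``$\blambda(C\setminus O)=0$'' is decided by $L_{0}$ (the set is closed for such $O$) and $\blambda(O)$ is arithmetically computable, $\mu^{2}$ extracts the infimum. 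The reductions $L_{0,c}, L_{1}, L_{3}\to\blambda_{C}$ are then immediate by applying each to $\mathbb{1}_{C}$ and reading off whether $\blambda(C)=0$ (or the value itself).

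The main obstacle is reducing $L_{4}, L_{5}, L_{5,c}, L_{6}, L_{7}, J$ to $\blambda_{C}$: naive use of $\mathbb{1}_{C}$ yields only $\partial C$ information (since $\osc_{\mathbb{1}_{C}}=\mathbb{1}_{\partial C}$), and Jordan measurability of $C$ is strictly weaker than $\blambda(C)=0$. I plan to bypass this uniformly via a ``nowhere-dense copy'' of $C$: identifying $[0,1]\cong 2^{\N}\cong 2^{\N}\times 2^{\N}$ through binary expansion and the splitting into odd- and even-indexed digits, set
\[
C^{\ast} := \bigl\{x \in [0,1] : \text{the real formed by the even-indexed digits of } x \text{ lies in } C\bigr\}.
\]
Then $C^{\ast}$ is closed, nowhere dense (the odd digits are unconstrained, making its complement dense), and $\blambda(C^{\ast}) = \blambda(C)$ by Fubini. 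Since $\partial C^{\ast} = C^{\ast}$, each of Jordan measurability of $C^{\ast}$, continuity a.e.\ of $\mathbb{1}_{C^{\ast}}$, Riemann integrability of $\mathbb{1}_{C^{\ast}}$, $\int \mathbb{1}_{C^{\ast}}=0$, differentiability a.e.\ of $M\cdot \mathbb{1}_{C^{\ast}}$, and finiteness a.e.\ of $(+\infty)\cdot\mathbb{1}_{C^{\ast}}$ is equivalent to $\blambda(C)=0$. The hardest individual step will be $L_{8}\to\blambda_{C}$: an adversarial continuous-a.e.\ separator of $(C_{0},C_{1})$ need not expose the measure of either set (e.g.\ $f\equiv 1$ off a point of $C_{0}$ is a valid output). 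Here I plan to feed $L_{8}$ the pair $(\emptyset, C^{\ast})$, exploit that every valid output $f$ must equal $1$ on $C^{\ast}$ while being continuous on a full-measure set, and analyse $\{f \geq 1/2\}$ via the previously established $\Omega_{C}$-cluster equivalences to recover $\blambda(C^{\ast})$. The remaining delicate piece is the RM-code construction for $L_{2}$, which must be arranged so that it does not implicitly invoke $\Omega_{C}$.
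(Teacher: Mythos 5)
Your overall architecture (hub-and-spoke through $\blambda_{C}$, with super-level sets $\{x: f(x)\geq q\}$ of usco functions doing most of the work, the $L_{0}\to\blambda_{C}$ reduction via outer approximation by finite rational unions tested for null intersection, Vitali--Lebesgue for $L_{5}$, and the arithmetic definability of $\partial C$ for $J$) matches the paper's proof in spirit. But there is a genuine error in your forward direction for $L_{8}$: you propose to realise it by $d(\cdot,C_{0})/(d(\cdot,C_{0})+d(\cdot,C_{1}))$, yet the distance function to a closed set given only by its characteristic function is \emph{not} computable from $\blambda_{C}+\exists^{2}$ (this is exactly the locatedness issue of Remark \ref{XzX}; from $d(\cdot,C)$ one gets an RM-code of $C$ and hence, with $\exists^{2}$, an emptiness test). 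Worse, no everywhere-continuous separator satisfying the biconditional $x\in C_{i}\asa f(x)=i$ can be computed from $\blambda_{C}+\exists^{2}$ at all: such a functional is the Urysohn functional $\Phi_{11}$, which lies in the $\Omega_{C}$-cluster and hence computes $\Omega_{b}$ given $\exists^{2}$, while the paper proves $\Omega_{b}$ is \emph{not} computable in $\blambda_{C}+\exists^{2}$. This is precisely why $L_{8}$ is only required to be continuous almost everywhere: the paper routes through $L_{2}$, obtaining RM-codes for closed $D_{i}\subseteq C_{i}$ with $\blambda(C_{i}\setminus D_{i})=0$, applies the effective (second-order) Urysohn lemma to the coded sets, and then patches the values on the null remainders $C_{i}\setminus D_{i}$ to restore the biconditional. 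Your $L_{8}$ item cannot be repaired without some such detour, and note that the two pieces you explicitly defer ($L_{2}$ and $L_{8}\to\blambda_{C}$) are exactly where the real work sits; for $L_{2}$ the key move, which you do not supply, is to list all rational intervals $[p,q]$ with $\blambda(C\cap[p,q])=0$ (decidable by $L_{0}$, unlike disjointness from $C$) and take $D:=[0,1]$ minus their union, which is automatically a closed subset of $C$ with null difference.

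A secondary flaw is your unconditional claim that $C^{\ast}$ is nowhere dense with $\partial C^{\ast}=C^{\ast}$: if $C$ contains a dyadic interval then so does $C^{\ast}$ (fix the even digits to a prefix of that interval and the relevant odd digits to $0$), so the claim fails exactly when $C$ has nonempty interior. This is patchable, since ``$C$ has nonempty interior'' is arithmetical in the characteristic function and implies $\blambda(C)>0$, so a preliminary case distinction with $\exists^{2}$ reduces everything to the empty-interior case, where one does not even need the copy $C^{\ast}$ (then $D_{\mathbb{1}_{C}}=\partial C=C$ already). This case distinction is in fact how the paper handles $J$ and $L_{4,a}$ directly; for $L_{4}$--$L_{7}$ the paper instead computes the full measure by a dyadic bisection, discarding a subinterval when the local test (continuity ae together with absence of rational points of $C$, finiteness ae of $\mathbb{2}_{C}$, etc.) certifies null intersection, whereas your plan only extracts $L_{0}$ from each functional and then leans on your $L_{0}\to\blambda_{C}$ hub --- a legitimate alternative once the nowhere-density claim is repaired.
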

\begin{proof}
First of all, the equivalence between $L_{0}$ and $L_{0, c}$ is readily proved as closed sets have usco characteristic functions while $E_{k}:= \{ x\in [0,1]: f(x)\geq \frac{1}{2^{k}}\}$ is closed for usco $f:[0,1]\di \R$.  
Similarly, the first item obviously computes $L_{0}$ and $L_{0, c}$. 
Now assume the Lebesgue measure as in $(\blambda_{C})$ and let $f:[0,1]\di \R$ be usco.  Then $E_{q}:=  \{ x\in [0,1]:  f(x)\geq q \}$ is closed and note that for $n_{0}:=(\mu n)(\blambda (E_{n})=0)$, the essential supremum is in $[n_{0}-1, n_{0}]$.
To compute the first bit (after the comma) of the essential supremum, check whether $\blambda (E_{n_{0}-\frac12})=0$.  Continuing in this fashion, we obtain the binary expansion for the essential supremum, i.e.\ $L_{1}$ can be computed from the Lebesgue measure as in $(\blambda_{C})$.  Clearly, $L_{3}$ is also computable from the Lebesgue measure (and $\exists^{2}$) via the usual interval halving method (using the essential supremum).

\smallskip

Now assume $L_{1}$ and fix closed $C\subset [0,1]$. Note that $\mathbb{1}_{C}$ is usco and split $[0,1]$ into its halves $[0,\frac 12]$ and $[\frac12, 1]$ and set $l_{0}:=1$;  define $l_{1}$ as $l_{0}$ minus $\frac12$ for each sub-interval where the essential supremum is $0$.  Now repeat the previous steps and note that $\lim_{n\di \infty }l_{n}=\blambda(C)$, where the limit is computable by $\exists^{2}$. 
The previous construction also shows that $L_{2}$ is computable from $L_{1}$ and that $L_{0}$ computes $\blambda$.   That $L_{2}$ computes $\blambda$ follows from the fact that $\exists^{2}$ computes the Lebesgue measure 
for RM-closed sets.  

\smallskip

Next, $L_{4}$ is easily computed from $\blambda$ using the decomposition $D_{f}:= \cup_{q\in \Q^{+}}D_{f, q}$ as in \eqref{tago}.  
Indeed, the Lebesgue measure plus $\exists^{2}$ can decide whether each closed set $D_{f, q}$ has measure zero.
For the other direction, compute $\blambda(C)$ using $L_{4}$ as in the second paragraph but remove the length of the interval $[a, b]$ in case $\mathbb{1}_{C}$ is continuous ae on $[a,b]$ and $(\forall q\in \Q\cap [a,b])(\mathbb{1}_{C}(q)=0)$.  Since Riemann integrability on an interval is equivalent to continuity ae and boundedness, the equivalence for $L_{5}$ also follows in this way. 

\smallskip

To compute $L_{4, a}$, use $L_{0, c}$ and note that, by definition, $f$ is continuous ae if and only if the usco function $\osc_{f}$ is zero ae. 
For the other direction, assume $L_{4, a}$ and fix a closed set $C\subset [0,1]$.  Note that `$C$ has non-empty interior' is equivalent to 
\be\label{flagg}\textstyle
(\exists q\in \Q\cap [0,1], N\in \N)(\forall r\in B(q, \frac{1}{2^{N}})\cap \Q)(r\in C).
\ee
Clearly, \eqref{flagg} is decidable using $\exists^{2}$ and in case the former holds, we have $\blambda(C)>0$.  
In case \eqref{flagg} is false, one readily verifies that $\mathbb{1}_{C}$ has itself as oscillation function, i.e.\ $\osc_{\mathbb{1}_{C}}(x)=\mathbb{1}_{C}(x)$ for all $x\in [0,1]$.
We can now apply $L_{4, a}$ to $\mathbb{1}_{C}$ and compute $\blambda(C)$ as in the previous paragraph.  Thus, we can compute $L_{0}$, as required. 

\smallskip

For $L_6$ and $L_{7}$, note that for an usco function $f:[0,1]\di  [0, +\infty)$, we have the following equivalence by \cite{broeker}*{Ch.\ 8, Theorem 1.3, p.\ 113}:
\[
\textup{$f$ is finite ae} \asa \textup{$f$ is differentiable ae}\asa (\lim_{n\di \infty}\blambda(E_{n})=0),
\]
where $E_{q}:=  \{ x\in [0,1]:  f(x)\geq q \}$ is closed.  Hence, $\blambda$ can perform the decision procedure needed for $L_{6}$ and $L_{7}$.
For the other direction, let $C\subset [0,1]$ be closed and define the usco function $\mathbb{2}_{C}:[0,1]\di [0, +\infty)$ 
as follows: $\mathbb{2}_{C}(x):= +\infty$ in case $x\in C$ and $\mathbb{2}_{C}(x)=0$ otherwise.  Now compute $\blambda(C)$ as in the second paragraph but remove the length of the interval $[a, b]$ in case $\mathbb{2}_{C}$ is finite ae on $[a,b]$ (resp.\ differentiable ae).  %The previous also establishes the equivalence for $L_{0, a}$ and $L_{0, b}$.

\smallskip

%Next, $J$ from the penultimate item is computable in terms of $L_{5}$ by recalling that $E\subset [0,1]$ is Jordan measurable if and only if $\mathbb{1}_{E}$ is Riemann integrable (see e.g.\ \cites{frink, taomes}). 
Next, to compute $J$ from $\blambda$, fix a closed $C\subset [0,1]$ and note that the boundary $\partial C$ is (always) closed.
To show that $\partial C$ can be defined using $\exists^{2}$ (only), we recall the definition of $\partial E$:
\be\label{gen}\textstyle
x\in \partial E \asa (\forall k\in \N)(\exists y, z \in B(x,\frac{1}{2^{k}} ))(y\in E \wedge z\not \in E).  
\ee
Since $C$ is closed, the previous implies that $x\in \partial C\di x\in C$ for any $x\in [0,1]$.  
Thus, \eqref{gen} reduces to the following:
\[\textstyle
x\in \partial C \asa (\forall k\in \N)(\exists  z \in B(x,\frac{1}{2^{k}} )\cap\Q)( z\not \in C), 
\]
which is arithmetical as the complement of $C$ is open.   Hence, $\partial C$ is closed and definable in $\exists^{2}$, i.e.\ we may use $\blambda(\partial C)=0$ to decide whether $C$ is Jordan measurable, yielding the Jordan functional $J$.  To show that $J$ computes $L_{0}$, let $C\subset [0,1] $ be closed and note that $C$ is the union of $\partial C$ and the interior $\INT(C)$.
Hence, if $J(C)$ tells us $C$ is not Jordan measurable, then $0<\blambda (\partial C)<\blambda (C)$.  
In case $C$ is Jordan measurable, $\blambda(\partial C)=0$ by definition.  Since $C$ is closed, we now have 
\[\textstyle
\blambda (C)>0\asa \INT(C)\ne \emptyset \asa (\exists q\in \Q, k\in \N)(\forall r\in B(x,\frac{1}{2^{k}} ) \cap \Q)(r\in C ),
\]
where the right-most formula is decidable using $\exists^{2}$, i.e.\ $L_{0}$ is obtained. 

\smallskip

To show that $L_{8}$ computes Lebesgue measure for some closed set $C\subset [0,1]$, define $C_{1}:= C$ and $C_{0}:=\emptyset$ and let $f:[0,1]$ be the associated continuous ae function as provided by $L_{8}$.  
Now proceed as in the second paragraph, but omit the interval $[a, b]$ in case $(\forall q\in [a, b]\cap \Q)( f(q)<1 )$.  
To show that $L_{8}$ is computable in terms of $L_{2}$, let $C_{0}, C_{1}\subset [0,1]$ be closed and disjoint.  Now let $D_{0}\subset C_{0}$ and $D_{1}\subset C_{1}$ be as provided by $L_{2}$.  
Since $D_{i}$ are RM-codes for closed sets, $\exists^{2}$ computes the continuous function $f:[0,1]\di \R$ from the (effective) Urysohn lemma as in \cite{simpson2}*{II.7.3}.  
Now define $g:[0,1]\di \R$ as follows
\[
g(x):= 
\begin{cases}
0 & \textup{ if $x\in C_{0}\setminus D_{0}$}\\
1 & \textup{ if $x\in C_{1}\setminus D_{1}$}\\
f(x) &\textup{ otherwise}
\end{cases},
\]
and verify that this function is continuous ae and as required for $L_{8}$.
\end{proof}
Regarding $L_{8}$, the Urysohn lemma for `continuity' replaced by weaker conditions has been studied in e.g.\ \cites{szcz, kowalski, malina}.

\subsubsection{On the computational weakness of the Lebesgue measure}\label{leak}
We show that the Lebesgue measure as in $\blambda_{C}$ is computationally weak.  
Of course, $\blambda_C$ is \emph{lame} since it is computable in the lame functional $\Omega_C$, which follows by the associated cluster theorems.
In this section, we additionally show that $\blambda_{C}$ does not compute $\Omega_{b}$ assuming $\SS^{2}$ (Corollary \ref{corry}) while a slight generalisation of $\blambda_{C}$ does compute $\exists^{3}$, assuming $\exists^{2}$ (Theorem \ref{begga}).

\smallskip

For the sake of simplicity, we define $\blambda_C$ from the product measure on $2^\N$ and not from the Lebesgue-measure on $[0,1]$. Since the binary expansion projection from $2^\N$ to $[0,1]$ is measure-preserving, the two versions of  $\blambda_C$ are equivalent modulo $\exists^2$.

\begin{theorem}
The functional $\Omega_b$ is not computable in $\blambda_C$ and $\exists^2$.
\end{theorem}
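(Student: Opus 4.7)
The plan is a measure-theoretic genericity argument, in the same spirit as the hyperarithmetic genericity used in the proof of Theorem~\ref{tame}. Suppose for contradiction that a term $t[\blambda_C,\exists^2,\chi_X]$ of the calculus of Section~\ref{lambdaz} computes $\Omega_b(X)$ for every $X\subseteq 2^\N$ with $|X|\leq 1$. Specialising to $X=\emptyset$ gives, via Theorem~\ref{thm5.7}, a well-founded computation tree $\mathcal{T}_\emptyset$ whose output is $0$. The goal will be to produce a $\blambda$-conull set of $x\in 2^\N$ for which the run $t[\blambda_C,\exists^2,\chi_{\{x\}}]$ mimics $\mathcal{T}_\emptyset$ step for step and again outputs $0$; since $\Omega_b(\{x\})=1$ for every $x$, this will be the desired contradiction.

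The heart of the argument will be a Noetherian induction along $\mathcal{T}_\emptyset$, assigning to each node $\nu$ the set $B_\nu\subseteq 2^\N$ of $x$'s at which the $\chi_{\{x\}}$-run first deviates from $\mathcal{T}_\emptyset$, and establishing $\blambda(B_\nu)=0$ in three cases. At a query node $\chi_X(f)$, the two runs agree unless $f=x$, so $B_\nu$ sits inside the singleton $\{f\}$. At an $\exists^2$-node the sub-computations are indexed by $\N$, so $B_\nu$ is a countable union of the inductively $\blambda$-null sets $B_{\nu_n}$. At a $\blambda_C$-node on a closed-set characteristic function $F$ the children are indexed by $g\in 2^\N$, so I will invoke Fubini: by induction each $N_g:=\{x : F^x(g)\neq F^\emptyset(g)\}$ is $\blambda$-null, so $\{(x,g):F^x(g)\neq F^\emptyset(g)\}$ is product-null; hence for $\blambda$-a.e.\ $x$ the symmetric difference $C^x\triangle C^\emptyset$ is $\blambda$-null, and since Lebesgue measure ignores null sets we obtain $\blambda_C(F^x)=\blambda_C(F^\emptyset)$. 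Applying the induction at the root yields a $\blambda$-null $B$ such that any $x\notin B$ satisfies $t[\blambda_C,\exists^2,\chi_{\{x\}}]=0$, as required.

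The main obstacle will be the $\blambda_C$-case, and specifically justifying the Fubini step. I will need measurability of $(x,g)\mapsto F^x(g)$ so that $\{(x,g):x\in N_g\}$ is product-measurable; this should follow from $F^x(g)$ being effectively Borel in $(x,g)$, as the output of a fixed sub-tree of $\mathcal{T}_\emptyset$ run on the two related inputs $\chi_\emptyset$ and $\chi_{\{x\}}$. I also need $F^x$ to represent an actual closed set so that $\blambda_C(F^x)$ is defined on the relevant $x$; this is automatic from the contradiction hypothesis that $t$ terminates on every $\chi_{\{x\}}$. A secondary concern is that at nodes with uncountable branching (such as $\blambda_C$-nodes) one cannot naively union the bad sets of the children, but the Fubini step correctly replaces such a union at precisely the places it would fail.
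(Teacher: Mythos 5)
Your overall plan (show that for almost every $x$ the run on $\chi_{\{x\}}$ agrees node-by-node with the run on $\chi_\emptyset$) is in the right spirit, and the query-node and $\exists^2$-node cases are fine, but the step you yourself flag as the main obstacle is a genuine gap, not a routine verification. To apply Fubini at a $\blambda_C$-node you need the set $\{(x,g): F^x(g)\neq F^\emptyset(g)\}$ to be product-measurable, and your proposed justification -- that $F^x(g)$ is ``effectively Borel in $(x,g)$'' because it is the output of a fixed sub-tree -- is not correct. Already for computations relative to $\exists^2$ alone the termination-with-value relation is $\Pi^1_1$ and not Borel; for sub-terms that themselves contain nested applications of the type-three object $\blambda_C$, the value relation is defined by a positive induction with branching over $2^\N$, and ZFC gives no measurability guarantee whatsoever for such relations. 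Any attempt to prove the needed measurability by induction on the tree runs into the same problem at the inner $\blambda_C$-nodes, so the Fubini step is circular as it stands. The only ways we know to supply it are exactly the ingredients your sketch omits: either use the lameness of $\blambda_C$ to replace each sub-computation by an $\exists^2$-computation (whose graph is $\Pi^1_1$, hence measurable), or replace the Fubini argument altogether by a fixed conull set of ``good'' arguments.

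The paper's proof takes the second route and thereby sidesteps joint measurability entirely. It fixes a single $x_0$ with $\omega_1^{\CK}[x_0]>\omega_1^{\CK}$ and proves, by induction on the computation tree and for all type-0/1 parameters $\vec f$ preserving $\omega_1^{\CK}$, that the runs on $\emptyset$ and on $\{x_0\}$ have the same value. At a query to the input set, the queried point is hyperarithmetical in $\vec f$ (this uses lameness of $\blambda_C$), hence distinct from $x_0$; at a $\blambda_C$-node, the Sacks--Tanaka measure-theoretic uniformity theorem says that $\{y: \omega_1^{\CK}[\vec f,y]=\omega_1^{\CK}\}$ has measure one, so the two closed sets being measured agree off this single, classically measurable null set and $\blambda_C$ returns the same value. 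Note that in this argument the only sets whose measurability is invoked are the two closed sets themselves and one $\Sigma^1_2$-style set handled by Sacks' theorem, whereas your version needs measurability of the graph of a transfinite computation in two real variables. If you want to salvage your Fubini variant, you would have to first prove (essentially via the lameness translation of Theorem \ref{tame}, relativised) that all the relevant value relations are $\Pi^1_1$ in the real parameters; at that point you have reconstructed the machinery the paper uses, so the shorter path is the fixed-$x_0$ argument.
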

\begin{proof}
We let $\omega_1^{\CK}[\vec f]$ denote the least ordinal that is not computable in the sequence $\vec f$ in $\N^\N$.  By the relativised version of the Sacks-Tanaka theorem (\cite{Sacks.high}*{Cor.\ IV.1.6}):
\begin{center} (*)\hspace{2mm} 
If $\omega_1^{\CK}[\vec f] = \omega_1^{\CK}$, then $\{x \in 2^\N : \omega_1^{\CK}[\vec f,x] = \omega_1^{\CK}\}$ has measure 1.
\end{center}
Let $x_{0} \in 2^\N$ be such that $\omega_1^{\CK}[x_{0}] > \omega_1^{\CK}$ and let $X = \{x_{0}\}$. We will use (*) to show that $\blambda_C$ cannot distinguish $X$ from the empty set by some Kleene computation, working with the lambda calculus approach to Kleene computations relative to partial functionals from \cite{dagsamXIII}. Since $\Omega_b$ can distinguish $X$ and $\emptyset$, the theorem follows.

\smallskip

\noindent
In (more) detail, we will prove the following claim:
\begin{itemize}
\item[(**)] Let $Y$ stand  for $X$ or $\emptyset$. Let $\vec f$ be a set of parameters of types 0 or 1 such that $\omega_1^{\CK}[\vec f] = \omega_1^{\CK}$ and let $d[\blambda_C,Y,\exists^2,\vec f]$ be a term of type 0, with all constants displayed, such that the term has a value for both instances of $Y$. Then the two values are the same.

\end{itemize}
The proof will proceed by induction on the rank of the computation tree for $d[\blambda_C,\emptyset,\exists^2,\vec f]$, with a case distinction based on the syntactic form of $d$.
\begin{itemize}
\item In all cases except application of $Y$ or application of $\blambda_C$, all immediate subcomputations will, by the induction hypothesis, have the same values for $Y = \emptyset$ and for $Y = X$, and thus, the value of $d$ itself will be the same in the two cases.
\item In case $d[\blambda_C,Y,\exists^2,\vec f] = Y(\lambda n.e[\blambda_C,Y,\exists^2,n,\vec f])$, the induction hypothesis implies that $e[\blambda_C,\emptyset,\exists^2,n,\vec f]=e[\blambda_C,X,\exists^2,n,\vec f]$ for all $n^{0}$. Now define $y(n) := e[\blambda_C,\emptyset,\exists^2,n,\vec f]$. Since $\blambda_C$ is lame, $y$ is hyperarithmetical in $\vec f$, so $y \neq_{1} x_{0}$ by the assumption on $x_{0}$ and $\vec f$. 
Thus, the value of $d$ for $Y=\emptyset, X$ will be 0, since $y$ is neither in $\emptyset$ nor in $X$.
\item For the case $d[\blambda_C,Y,\exists^2,n,\vec f] = \blambda_C(\lambda y \in 2^\N.e[\blambda_C , Y , \exists^2,y,\vec f])(n)$, we assume that the latter measure is given as a fast converging sequence of rationals. If $y$ is such that $\omega_1^{\CK}[\vec f , y] = \omega_1^{\CK}$, then we use the induction hypothesis and see that the value of $e$ is independent of the choice for $Y$. By (*) above, the remaining set of $y$'s have measure zero, so in evaluating the two values of $d$ we apply $\blambda_C$ to sets of the same measure, and thus the values will be the same.
\end{itemize}
This ends the proof of the claim and the theorem follows.
\end{proof}
By the same argument one can establish the following corollary.
\begin{cor}\label{corry}
The functional $\Omega_b$ is not computable in $\blambda_C+\SS^{2}$.
\end{cor}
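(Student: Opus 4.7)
The strategy mirrors the proof of the preceding theorem, with $\exists^{2}$ replaced by $\SS^{2}$ and the Church-Kleene ordinal replaced by a suitable $\SS^{2}$-analogue throughout. Write $\omega_{1}^{\SS^{2}}[\vec f\,]$ for the least ordinal that is not Kleene-computable from $\SS^{2}$ together with parameters $\vec f$ of type $0$ or $1$. Since $\SS^{2}$ is itself a type-$2$ functional, this ordinal remains countable, and the set of $\vec f$ with $\omega_{1}^{\SS^{2}}[\vec f\,]=\omega_{1}^{\SS^{2}}$ is non-trivial.

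The first ingredient needed is a relativised Sacks-Tanaka statement at this higher level: if $\omega_{1}^{\SS^{2}}[\vec f\,]=\omega_{1}^{\SS^{2}}$, then the set $\{x\in 2^{\N}:\omega_{1}^{\SS^{2}}[\vec f,x]=\omega_{1}^{\SS^{2}}\}$ has product measure $1$. The standard Sacks-Tanaka argument lifts to this setting; what matters is that the ordinal invariant in question is countable and that the set of reals that do not strictly raise it is a tail-like set to which the Kolmogorov zero-one law applies. The second ingredient is that $\blambda_{C}$ remains \emph{lame relative to $\SS^{2}$}, in the sense that whenever $y^{1}$ is computable in $\blambda_{C}+\SS^{2}+\vec f$ it is already computable in $\SS^{2}+\vec f$. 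This is obtained by transcribing the proof of Theorem \ref{tame}, replacing the use of Lemma \ref{Alain}(a) with its analogue one level up in the projective hierarchy: a closed subset of $2^{\N}$ whose characteristic function is $\SS^{2}$-computable in $\vec f$ has an RM-code that is $\SS^{2}$-computable in $\vec f$, proved from $\Pi^{1}_{2}$-uniformisation and $\Sigma^{1}_{2}$-separation on $\N$ in place of their $\Pi^{1}_{1}/\Sigma^{1}_{1}$ counterparts.

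With these two ingredients at hand, the inductive argument of the previous theorem transcribes essentially verbatim. Fix $x_{0}\in 2^{\N}$ with $\omega_{1}^{\SS^{2}}[x_{0}]>\omega_{1}^{\SS^{2}}$ and set $X=\{x_{0}\}$, so $X$ is closed and distinguished from $\emptyset$ by $\Omega_{b}$. One proves by induction on the rank of the computation tree for $d[\blambda_{C},\emptyset,\SS^{2},\vec f\,]$ that, for any parameters $\vec f$ satisfying $\omega_{1}^{\SS^{2}}[\vec f\,]=\omega_{1}^{\SS^{2}}$, terminating terms $d[\blambda_{C},Y,\SS^{2},\vec f\,]$ of type $0$ take the same value for $Y=\emptyset$ and $Y=X$. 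The only non-routine clauses are the two analysed in the theorem: for an application $Y(\lambda n.e)$, the induction hypothesis produces a single $y^{1}$ in both cases, which by the lameness statement is $\SS^{2}$-computable in $\vec f$, hence distinct from $x_{0}$, so both evaluations return $0$; for an application of $\blambda_{C}$, the relativised Sacks-Tanaka statement forces the two integrands to agree on a measure-$1$ set, so the two measure values coincide. Hence no such term can distinguish $\emptyset$ from $X$, while $\Omega_{b}$ does, proving the corollary.

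The principal obstacle is calibrating the ordinal $\omega_{1}^{\SS^{2}}$ (or a convenient surrogate thereof, such as an admissibility spectrum suited to $\SS^{2}$-computations) so that both the Sacks-Tanaka transfer and the lameness transfer go through with compatible bookkeeping; the remaining steps are essentially mechanical transcriptions of the preceding proof.
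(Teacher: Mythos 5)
Your overall plan coincides with the paper's: rerun the induction from the preceding theorem, which needs (i) a Sacks--Tanaka-type measure-theoretic uniformity statement for $\SS^{2}$-computations and (ii) the lameness of $\blambda_{C}$ relativised to $\SS^{2}$. The genuine gap lies in how you justify these ingredients, and for (ii) your route cannot work. You propose to obtain $\SS^{2}$-lameness of $\blambda_{C}$ by transcribing the proof of Theorem \ref{tame}, replacing Lemma \ref{Alain}(a) by a ``one level up'' analogue derived from $\Pi^{1}_{2}$-uniformisation and $\Sigma^{1}_{2}$-separation. Neither principle is a theorem of $\ZFC$ (both fail under $\textsf{V=L}$; in $\ZFC$ one has only $\Sigma^{1}_{2}$-uniformisation and $\Pi^{1}_{2}$-separation), while the paper's metatheory is $\ZFC$. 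Worse, if the proposed analogue of Lemma \ref{Alain}(a) held --- every closed $X\subseteq 2^{\N}$ whose characteristic function is computable in $\SS^{2}$ and $\vec f$ has an RM-code computable in $\SS^{2}$ and $\vec f$ --- then emptiness of such an $X$ would be arithmetical in that code, and the very same transcription of Theorem \ref{tame} would prove that $\Omega_{C}$ is lame relative to $\SS^{2}$. That conclusion is false: as noted in Section \ref{lameoc}, $\Omega_{C}+\SS^{2}$ computes $\SS_{2}^{2}$ (and even $\exists^{3}$ under $\textsf{V=L}$), hence computes reals not computable in $\SS^{2}$ alone; this is exactly why the lameness of $\Omega_{C}$ cannot be relativised to $\SS^{2}$. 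So no code-extraction argument via separation/uniformisation can deliver ingredient (ii).

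Ingredient (i) is also not the routine lift you suggest: the Kolmogorov zero-one law only gives measure $0$ or $1$, and establishing measure $1$ for the set of reals that do not raise the relevant ordinal is a substantial theorem at the Suslin level, not a transcription of the classical Sacks--Tanaka argument. The paper's actual proof consists of citing the relativised version of \cite{nomu}*{Theorem 3.9}: Normann's measure-theoretic uniformity results for the Suslin functional supply both (i) and (ii), the $\SS^{2}$-lameness of $\blambda_{C}$ being obtained there by measure-theoretic means (the measure of a closed set computable in $\SS^{2}$ and parameters is computed without ever producing a code), which is precisely what distinguishes $\blambda_{C}$ from $\Omega_{C}$ at this level. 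With those results in hand, the remainder of your induction goes through as you describe.
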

\begin{proof}
We apply the relativised version of \cite{nomu}*{Theorem 3.9}.
%Theorem 3.9 in my \emph{Measure-theoretic uniformity and the Suslin functional} from \emph{Computability}, Vol. 10, No. 2, 2021, pp. 91 - 105. 
We cannot relativise the lameness of $\Omega_C$ to $\SS^{2}$, but by the results in \cite{nomu} we can relativise the lameness of $\blambda_C$ to $\SS^{2}$, so the full proof works.
\end{proof}
Finally, we have restricted $\blambda_{C}$ to closed sets and it is a natural question what happens if this restriction is lifted.  
To this end, let the functional $L_{0,a}(E)$ be defined for all subsets of $[0,1]$ and decide if $\partial E$ has measure 0 or not.
Since $\partial E$ is closed for \emph{any} set $E\subset \R$, the functional $L_{0, a}$ seems similar enough to $\blambda_{C}$, which is an incorrect impression by the following result. 
\begin{thm}\label{begga}
The combination $L_{0,a}+\exists^2$ computes $\exists^{3}$.  
\end{thm}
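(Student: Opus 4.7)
The plan is to reduce $\exists^3$ to $L_{0,a}+\exists^2$ by constructing, from a given $Y\colon\N^\N\to\N$, a subset $A\subseteq[0,1]$ whose boundary has positive Lebesgue measure precisely when $Y^{-1}(0)\neq\emptyset$. Feeding (the characteristic function of) $A$ into $L_{0,a}$ then decides $\exists^3(Y)$. The construction will use a fat Cantor set $C\subseteq[0,1]$, i.e.\ closed nowhere dense with $\blambda(C)>0$, together with the canonical coordinate map $\sigma\colon C\to 2^\N$ induced by the left/right choice at each stage of the Cantor construction; both $C$ and $\sigma$ are $\exists^2$-computable. Since $\N^\N$ embeds continuously into $2^\N$ via a standard $\exists^2$-computable encoding, I may assume throughout that $Y$ is defined on $2^\N$.

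The key definition will be
\[
A:=\{x\in C:(\exists n\in\N)(Y(\sigma_x^{(n)})=0)\},
\]
where $\sigma_x^{(n)}\in 2^\N$ is the $n$-th tail $(\sigma_x(n),\sigma_x(n+1),\ldots)$. The characteristic function $\chi_A$ is computable from $Y$ and $\exists^2$: ``$x\in C$'' is arithmetical, $\sigma_x$ is computed arithmetically on $C$, and the outer $\exists^0 n$ is handled by Feferman's $\mu^2$. Two cases for the boundary arise. If $Y^{-1}(0)=\emptyset$ then $A=\emptyset$, and $\partial A=\emptyset$ has measure zero. If some $f_0$ satisfies $Y(f_0)=0$, then for each finite $s\in 2^{<\N}$ the point $x_s\in C$ with $\sigma_{x_s}=s\cdot f_0$ lies in $A$, because its $|s|$-th tail is exactly $f_0$. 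As the cylinders $\{x\in C:\sigma_x\upharpoonright|s|=s\}$ form a basis for the subspace topology on $C$, each such cylinder contains some $x_s$, so $A$ is dense in $C$ and $\overline{A}=C$. Nowhere-density of $C$ forces $\INT(A)\subseteq\INT(C)=\emptyset$, and hence $\partial A=C$ has measure $\blambda(C)>0$. Thus $L_{0,a}(A)$ returns ``measure zero'' iff $Y^{-1}(0)=\emptyset$, and $L_{0,a}+\exists^2$ decides $\exists^3(Y)$.

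The main obstacle I anticipate is purely cosmetic bookkeeping: the map $\sigma$ fails to be a bijection at the countably many dyadic endpoints of $C$, but these form a nowhere dense set of measure zero and affect neither the density of $A$ in $C$ nor the measure of $\partial A$, so they can be handled by a single fixed convention. The combinatorial heart of the argument is that a single witness $f_0$ already forces density of $A$ throughout $C$ via concatenation with arbitrary prefixes $s$, which is why the measure of the boundary jumps from $0$ (when no witness exists) to the full $\blambda(C)>0$ as soon as any witness appears.
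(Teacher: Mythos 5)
Your proof is correct, and it arrives at the same dichotomy the paper engineers — boundary of measure zero when no witness exists, boundary containing a fat Cantor set (hence of positive measure) when one does — but via a genuinely different construction. The paper keeps the arbitrary input set $X\subseteq[0,1]$ as is and plants countably many affine copies of $X$ inside the complementary gaps of a fat Cantor set, accumulating at the gap endpoints; when $X\neq\emptyset$ the closure of this union contains the whole fat Cantor set while its interior avoids it, so the boundary has positive measure. You instead recode the problem into Cantor space and work \emph{inside} the fat Cantor set $C$ itself, using the address homeomorphism $\sigma\colon C\to 2^\N$ and the tail condition $(\exists n)(Y(\sigma_x^{(n)})=0)$, so that a single witness $f_0$ is smeared densely through $C$ by concatenation with arbitrary prefixes and $\partial A=C$ precisely when a witness exists. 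Your route buys a cleaner verification: empty interior is automatic since $A\subseteq C$, and density is a one-line shift argument, as opposed to the paper's bookkeeping with the maps $\phi_i$ and $\psi_{i,k}$ subdividing each gap. The price is the preliminary reduction of $\exists^{3}$ over $\N^\N$ to deciding emptiness of subsets of $2^\N$; this is standard, but it needs slightly more than the continuous embedding you cite — one must totalize $Y$ on all of $2^\N$ by deciding (arithmetically, hence with $\exists^2$) membership in the image of the encoding — while the paper's construction tests emptiness of an arbitrary $X\subseteq[0,1]$ directly without recoding the domain. Finally, your anticipated obstacle about dyadic endpoints is vacuous: since the stage-$n$ pieces of the fat Cantor set are pairwise disjoint closed intervals whose lengths tend to $0$, the address map $\sigma$ is a genuine homeomorphism of $C$ onto $2^\N$, so no special convention is needed.
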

\begin{proof}
Let $X \subseteq [0,1]$ be given. To decide if $X=\emptyset$, proceed as follows: in case $\partial X$ has positive measure, $X$ is non-empty.  
In case $\partial X$ has measure zero, let $A \subset [0,1]$ be a fat Cantor set, i.e.\ $A$ is closed, has positive measure, and has empty interior. %The set $A$ can be chosen to be computable in $\exists^2$ , so $A$ have access to an RM-code computable in $\exists^2$; for the sake of simplicity we assume that 0 and 1 are in $A$.
Using the standard construction of such a set $A$, there is a primitive recursive sequence of pairs $ \langle a_i,b_i\rangle$ of rationals such that $(a_i,b_i)$ are pairwise disjoint and their union form the complement of $A$. We let $0,1 \in A$. Due to the lack of an interior, $A$ will be the closure of $\{a_i,b_i : i \in \N\}$, and all $a_i$ and $b_i$ are in $A$.

\smallskip

In a nutshell, we shall define a set $E\subset [0,1]\setminus A$ as in \eqref{fif} such that: $E$ is empty if $X$ is empty, while $\partial E=A$ if $E\ne \emptyset$.
Applying $L_{0, a}$, we can decide if $X=\emptyset$ and the proof is finished. All items involved in the construction of $E$ are arithmetically defined, and thus computable in $\exists^2$.

\smallskip

Each interval $(a_i,b_i)$ can be put in a 1-1 correspondence with $\R$   via a continuous, increasing function $\phi_i$. For $k \in \mathbb{Z}$ we let $I_{i,k} = \phi_i^{-1}([2k,2k+1])$. 
These intervals are closed intervals of positive length and we define $\psi_{i,k}$ as the affine 1-1-map from $I_{i,k}$ to $[0,1]$.
Finally, we define the following set:
\be\label{fif}
E = \bigcup_{i \in \N,k \in \mathbb{Z}}\psi_{i,k}^{-1}(X).
\ee
Next, if $X=\emptyset$, then $E=\emptyset$ and $\partial E$ has measure zero. If $X\ne \emptyset$, then each $a_i$ and $b_i$ will be in the closure of $E$, and consequently, all of $A$ will be in the closure of $E$. %, since $A$ is the closure of the set of end-points of the intervals forming the complement. Note that this is where we use that $A$ does not have an interior. 
The rest of the closure of $E$ is contained in the intervals $I_{i,k}$. 
A real $x\in A$ cannot be an interior point in the closure of $E$, since we left gaps out of $E$ arbitrarily close to the endpoints $a_i$ and $b_i$ for each $i$; for the other elements in the closure of $E$ they cannot be an interior point because the corresponding $\psi_{i,k}(y) \in [0,1]$ is not an interior point in the boundary of $X$.
\end{proof}
The difference between $L_{0}$ from Theorem \ref{lam} and $L_{0,a}$ from Theorem \ref{begga} is of course that while $\partial E$ is closed for any $E\subset \R$, we cannot define the former using $\exists^{2}$.  
In this (subtle) way, $L_{0}$ does not compute $L_{0,a}$.  
\section{Some open problems related to $\Omega_{C}$}\label{hopen}
In this %final 
section, we discuss some open problems relating to partial sub-functionals of $\exists^3$ that are generalisations of $\Omega_{C}$ and $\Omega$.  

\smallskip

First of all, let $\Gamma$ be a family of subsets of $[0,1]$, alternatively of $\R$, $2^\N$ or $\N^\N$. Then $\Gamma$ may serve as the domain of a partial subfunction $\Omega_\Gamma$ of $\exists^3$ (defined for one of the alternative spaces) deciding if an element $X$ of $\Gamma$ will be empty or not. For $\Omega_\Gamma$ to be non-trivial, we must have that $\Gamma$ contains the empty-set.

\smallskip

Both $\Omega_{\rm b}$ and $\Omega_{\rm C}$ are examples of such partial subfunctions of $\exists^3$, the main examples in this paper. Related to them is one crucial problem that we have to leave open, as follows.
We conjecture that the answer is negative.
\begin{problem}\label{whatsyour}
{\em Is $\Omega_{\rm C}$ computable in $\Omega_{\rm b}$ and $\exists^2$?}
\end{problem}
We have displayed a lot of examples of why $\Omega_{\rm b}$ and $\Omega_{\rm C}$ are of interest to the foundational study of ordinary mathematics, 
but from the point of view of computability theory there may be other functionals $\Omega_\Gamma$ with interesting properties. We will mention a few examples, reveal what we know about them, and leave the rest for future research.
\begin{itemize}
\item $\Gamma_1$ is the set of closed subsets of $\N^\N$, alternatively the set of ${\bf G}_\delta$ subsets of $[0,1]$, of $\R$ or of $2^\N$
\item $\Gamma_2$ is the set of ${\bf F}_\sigma$ subsets of $[0,1]$, of $\R$ or of $2^\N$, alternatively the set of $\sigma$-compact subsets of $\N^\N$.
\item $\Gamma_3$ is the set of countable subsets of the space in question.
\item $\Gamma_4$ is the set of countable, closed subsets of the space in question.
\end{itemize}
%Nothing is known about the relative computability of these functionals (modulo $\exists^2$) beyond what follows from inclusions between the domains.
Little is known about the relative computability of these functionals (modulo $\exists^2$), beyond\footnote{We now know that $\Omega_{\Gamma_3}$ is not computable in $\Omega_{\rm C}$ and $\exists^2$.} what follows from inclusions between the domains and the observations below.
 It is easy to see that the Suslin functional $\SS^{2}$ is computable in $\Omega_{\Gamma_1}$ and $\exists^2$, and thus that it is consistent with {\bf ZF} that $\exists^3$ is computable in the two. In this way, we observe that $\Omega_{\Gamma_{1}}$ is beyond the scope of this paper.

\smallskip

On the other hand, if a set $X \neq \emptyset$ is in $\Gamma_2$ and hyperarithmetical in a function $f$, then $X$ contains\footnote{This is a consequence of the following corollary of the Louveau separation theorem (\cite{louveau1}): {if an $\textbf{F}_\sigma$-set $X$ is hyperarithmetical (in $f$), then $X$ has a hyperarithmetical code as an $\textbf{F} _\sigma$-set. }} some element hyperarithmetical in $f$. 
This is what is needed to prove that $\Omega_{\Gamma_2}$ is lame, and then $\Omega_{\Gamma_3}$ and $\Omega_{\Gamma_4}$ will be lame as well. In a sense $\Omega_{\Gamma_2}$ is a sleeping monster, needing the Suslin functional to wake it up, while $\Omega_{\Gamma_1}$ is fully awake.  It is a natural question is if it is possible to draw an even finer line between these two kinds of sub-functionals of $\exists^3$.

\section{A lambda calculus formulation for S1-S9}\label{lambdaz}
\subsection{Introduction}
In a nutshell, the below is a corrigendum to the paper
\begin{center}
\emph{On the Computational Properties of Basic Mathematical Notions} (\cite{dagsamXIII})
\end{center}
by the authors.  As discussed above, this section is also relevant to the previous sections as we present an extension of S1-S9-computability to partial objects.  

\smallskip

In more detail, we have formulated a higher-order computability theory in \cite{dagsamXIII}, based on fixed point operators and the lambda calculus. 
Unfortunately, there is a technical error in \cite{dagsamXIII}, which was pointed out by John Longley in a private communication. %The first author of \cite{dagsamXIII} takes full responsibility for  this error.
The error in \cite{dagsamXIII} is in Section 3, where the restriction in bullet point nr.\ 5 of Definition 3.7 is too strong. As a consequence, Theorem 3.19 of \cite{dagsamXIII} is not correct, which will all be fixed in the below.

\smallskip

In this corrigendum we provide a correction to the computability theory from \cite{dagsamXIII} based on ideas from Platek, Scott, Plotkin, and others.  We formulate an alternative approach to higher-order computability, relevant for the investigation of the relative computability of realisers. The key concept is that of a \emph{computation tree}, an operational-like way of understanding higher-order computations.
 
\smallskip

Mathematically, there is hardly anything new in this corrigendum.  All type structures we consider, and that are needed in order to rectify the error in \cite{dagsamXIII}, are present in Platek's thesis (\cite{Pla}), as surveyed in Moldestad's \cite{Mol}*{\S12}. The lambda calculus we present in the below may be found in Platek's thesis (\cite{Pla}). The schemes S1-S9 have to be modified in order to deal with functionals of all finite types, but these modifications do not add any extra computational power. The details of those modifications will not be needed, and are therefore omitted.

\smallskip

The equivalence between the Kleene-style notion of computability and that based on the lambda calculus is also due to Platek, and can be found in published form in \cite{Mol}.  The proof is however quite involved, and for expository reasons we employ methods developed by Plotkin \cite{Plo} to give a more transparent treatment.  In particular, we define computation trees directly from the lambda-terms and not via the representation as a computation theory based on schemes. Thus, Theorem \ref{thm5.7} below is original and captures the purpose of our calculus from \cite{dagsamXIII}. 
Following Theorem \ref{thm5.7}, the equivalence with the Kleene-style approach is an easy exercise, and thus left out. 

\smallskip

All applications of our computing framework in \cite{dagsamXIII} are still sound, since they are based on the computation tree characterisation of computable functionals.
All functionals of interest to this paper are in some $\Pa(\sigma)$, defined in Definition~\ref{Dree}. We consider Theorem \ref{thm5.7} -and its transparent proof- to represent a significant simplification of the proofs in \cite{Pla} (and \cite{Mol}*{\S12}) when restricted to  $\Pa$.

\smallskip

The authors are grateful to John Longley for spotting the error in \cite{dagsamXIII} and for discussing intermediate attempts to deal with this error. The authors are also grateful to Johan Moldestad for pointing out how \cite{Mol}*{\S12} is related to \cite{Pla}.
\subsection{The computation theory}\label{s2}
In this section, we introduce our computation theory in full detail.
%\subsection{Types}\label{ss2.1}
%\begin{definition}\label{def1}{\em

\smallskip

First of all, the \emph{types} of finite rank are inductively defined as usual.
\begin{definition}[Finite types]~
\begin{itemize}
\item The type of natural numbers, $0$, is a type.
\item If $\tau$ and $\delta$ are types, then $\sigma = (\tau \rightarrow \delta)$ is a type.
\end{itemize}
%We let $\Ty(k)$ be the set of types with rank $\leq k$}\end{definition}
%For applications, we will mainly be concerned with $\Ty(3)$, but we will develop our concepts for the full type hierarchy $\Ty$. When convenient, and $n > 1$, we may use the standard $(\sigma_1 , \ldots , \sigma_n \rightarrow 0)$ - notation and related well established notations. 
%
\end{definition}
We adopt the standard notation based on currying. We will consider three intimately related type-structures, as follows.
\begin{definition}[Three type structures]\label{Dree}{\em  We define the structure $(\Mo(\sigma ), \preceq_\sigma)$, the sets $\F(\sigma)$ and the sets $\Pa(\sigma)$ 
by recursion on the rank of $\sigma$ as follows.
\begin{itemize}
\item We define $\Mo(0) = \Pa(0) = \N_\bot = \N \cup\{\bot\}$ and $\F(0) = \N$.  The associated ordering $x \preceq_{0} y$ holds if $x = \bot$ or $x = y$.
\item We define $\Phi \in \Mo(\sigma \rightarrow \tau)$ if $\Phi:\Mo(\sigma) \rightarrow \Mo(\tau)$ and is monotone with respect to $\preceq_\sigma$ and $\preceq_\tau$. The associated ordering $\Phi \preceq_{\sigma \rightarrow \tau} \Psi$ holds if we have monotonicity as in $\phi \preceq_\sigma \psi \rightarrow \Phi(\phi) \preceq_\tau \Psi(\psi)$ for any $\phi^{\sigma}, \psi^{\sigma}\in \Mo(\sigma)$. 
\item We define $\Pa(\sigma \rightarrow \tau) = \F(\sigma) \rightarrow \Pa(\tau)$ and $\F(\sigma \rightarrow \tau) = \F(\sigma) \rightarrow \F(\tau)$.
\end{itemize}
We sometimes omit the typing in $\preceq_{\sigma}$ if this information is clear from the context. 
}\end{definition}
By recursion on the type $\sigma$, we define an \emph{embedding} $\Phi \mapsto \varepsilon_\sigma(\Phi)$, mapping each $\F(\sigma)$ and $\Pa(\sigma)$ into $\Mo(\sigma)$, thereby identifying the  spaces $\F(\sigma)$ and $\Pa(\sigma)$ with subsets of $\Mo(\sigma)$, as in the following definition.
\begin{definition}[Embedding]
{\em \hspace*{2mm}
\begin{itemize}
\item The mapping $\varepsilon_0$ is the identity map both on $\F(0)$ and on $\Pa(0)$.
\item If $F \in \Pa(\sigma \rightarrow \tau)$, $\phi \in \Mo(\sigma)$ and there is a $g \in \F( \sigma )$ such that $\varepsilon_\sigma(g) \preceq \phi$, then $\varepsilon_{\sigma \rightarrow \tau}(F)(\phi) = \varepsilon_{\tau}(F(g))$. If there is no such $g$, $\varepsilon_{\sigma \rightarrow \tau}(F)(\phi) := \bot_\tau$.
\end{itemize}
}\end{definition}
%It is easy to see, by induction on $\sigma$, that each space $(\Mo(\sigma), \preceq_\sigma)$ is \emph{downwards complete} in the sense that if $X \subseteq \Mo(\sigma)$ is non-empty, then the greatest lower bound $\sqcap X$ exists. It is also easy to see that each $(\Mo(\sigma), \preceq_\sigma)$ is \emph{directed complete} in the sense that each directed set has a least upper bound. In particular, $\Mo(\sigma)$ has a least element $\bot_\sigma$, essentially the constant $\bot_\N$.
%
%\medskip

We will  now introduce the term language $\mathcal L$ to be used by our computability theory. This language is based on Plotkin's PCF (\cite{Plo}).
\begin{definition}[The language $\mathcal L$]\label{definition8}{\em ~
\begin{itemize}
\item The constants of our language $\mathcal L$ are as follows: $ 0$ of type 0,
 ${\suc}$ of type $0 \rightarrow 0$,
${\pd}$ of type $0 \rightarrow 0$,
${\case}$ of type $(0\times 0\times 0) \rightarrow 0$ and $\Phi$ of type $\sigma$ for each $\Phi \in \Pa(\sigma)$, the latter interpreted as  $\varepsilon_\sigma(\Phi)$.
\item  The typed terms in the language $\mathcal L$ are defined by recursion as follows.
\begin{itemize}
\item All  constants are terms in $\mathcal L$.
\item For all types $\sigma $, there is an infinite list $x_i^\sigma$ of variables of type $\sigma$. These are terms in $\mathcal L$.

\item If $t$ is a term of type $\sigma \rightarrow \tau$ and $s$ is a term of type $\sigma$, then $(ts)$ is a term of type $\tau$. 
\item If $t$ is a term of type $\tau$, then $(\lambda x_i^\sigma .t)$ is a term of type $\sigma \rightarrow \tau$. 
\item If  $t$ is a term of type $\sigma$, then $(\Fix~x_i^\sigma .t)$ is a term of type $\sigma$.
\end{itemize}
\end{itemize}}
\end{definition}
Following the previous definition, any term $t$ of type  $\sigma$ with free variables $x_1^{\tau_1}, \ldots x_m^{\tau_m}$ has a canonical interpretation as a monotone function 
\[ 
[[t]]:\Mo(\tau_1) \times \cdots \times \Mo(\tau_m) \rightarrow \Mo(\sigma),
\]
where we use the existence of least fixed points of monotone functions from $\Mo(\sigma)$ to $\Mo(\sigma)$ to interpret the fixed point operator $\Fix~x.s$. 

\smallskip

We are now ready to introduce our notion of computability.
\begin{definition}\label{def.comp}{\em 
Let $\sigma_1 , \ldots , \sigma_n , \delta$ be types, and let $\phi_1 , \ldots , \phi_n,\psi$ be partial objects of the corresponding types. 
We say that $\psi$ \emph{is computable in} $\phi_1 , \ldots , \phi_n$ if there is a term $s$ of type $\tau$,  with free variables among $x_1 , \ldots , x_n$ of types $\sigma_1 , \ldots , \sigma_n$ such that 
\[
  \psi \preceq_\tau [[s]](\phi_1 , \ldots , \phi_n ) .
  \]
}\end{definition}
The validity of $\beta$-conversion for our denotational semantics is not hard to prove, and is a consequence of the following lemma.
\begin{lemma} Let $t$ be a term with free variables among $y$ of type $\tau$ and $\vec x$ and let $s$ be a term of type $\tau$ with free variables among $\vec x$. Let $\vec \phi$ be a sequence of objects of the types of $\vec x$. Then

\[ [[t^y_s]](\vec \phi) = [[t]]([[s]](\vec \phi), \vec \phi).\]

\end{lemma}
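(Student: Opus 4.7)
The plan is to prove the substitution lemma by structural induction on the term $t$, following the grammar in Definition \ref{definition8}. Since $t^y_s$ is defined by capture-avoiding substitution, I will tacitly assume that the bound variables of $t$ have been renamed via alpha-conversion so that none coincides with $y$ or occurs free in $s$. This is harmless because our denotational semantics $[[\cdot]]$ is invariant under alpha-equivalence; this invariance is itself a routine preliminary induction (which, for the $\Fix$ clause, uses that the least fixed point depends only on the underlying monotone operator, not on the name of the bound variable).

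For the base cases, when $t$ is a constant the claim is immediate since $t^y_s = t$ and $[[t]]$ is a fixed element of some $\Mo(\sigma)$ that does not actually depend on its putative arguments. When $t$ is the variable $y$, then $t^y_s = s$ and both sides reduce to $[[s]](\vec \phi)$. When $t$ is one of the $x_i$, the substitution is a no-op and both sides reduce to the $i$-th component of $\vec \phi$. For the application case $t = t_1 t_2$, we have $(t_1 t_2)^y_s = (t_1)^y_s \, (t_2)^y_s$, and the clause for application in the definition of $[[\cdot]]$ reduces the goal to two instances of the induction hypothesis.

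The two genuinely non-trivial cases are the binders. For $t = \lambda z.\, t_0$ with $z$ renamed so that $z \neq y$ and $z$ is not free in $s$, we evaluate $[[t^y_s]](\vec\phi)$ at an arbitrary argument $\psi$: by definition this equals $[[(t_0)^y_s]](\psi, \vec\phi)$, which by the induction hypothesis equals $[[t_0]]([[s]](\psi,\vec\phi), \psi, \vec\phi)$. Since $z$ is not free in $s$ we may drop $\psi$ from $[[s]]$, obtaining $[[t_0]]([[s]](\vec\phi), \psi, \vec\phi)$; this is precisely the value of $[[\lambda z.\, t_0]]([[s]](\vec\phi), \vec\phi)$ at $\psi$. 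The case $t = \Fix~z.\, t_0$ proceeds along the same lines: both $[[t^y_s]](\vec\phi)$ and $[[t]]([[s]](\vec\phi),\vec\phi)$ are defined as the least fixed points of monotone operators on $\Mo(\tau)$, and the induction hypothesis together with the non-freeness of $z$ in $s$ shows these two operators are identical, so their least fixed points coincide.

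The main obstacle is purely bureaucratic: it lies in the bookkeeping around bound variables for the $\lambda$ and $\Fix$ clauses, where one must first rename bound variables to avoid capture and then verify that this renaming leaves $[[\cdot]]$ unchanged. Once the auxiliary alpha-invariance is in place, every step of the inductive argument is a direct unfolding of the clauses defining $[[\cdot]]$, so no new ideas beyond standard Scott--Plotkin denotational reasoning are required.
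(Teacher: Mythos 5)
Your proof is correct and follows essentially the same route as the paper: induction on the syntactic form of $t$, with the variable case a tautology and all non-binder cases immediate unfoldings of the semantics. The only divergence is the $\Fix$ case, where the paper invokes a sub-induction on the ordinals (comparing the ordinal approximations of the fixed point), whereas you observe that the induction hypothesis together with $z$ not being free in $s$ makes the two monotone operators on $\Mo(\tau)$ literally identical, so their least fixed points coincide outright — a harmless, slightly cleaner shortcut given that the semantics interprets $\Fix$ directly as the least fixed point.
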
 
The proof is by induction on the syntactic form of $t$, and all cases are trivial. In particular, the initial case $t = y$ is a tautology. In the case of the $\Fix$ operator, a sub-induction on the ordinals is required.

\subsection{An operational-like semantics}\label{s4}
In this section, we define a semantics for our notion of computability.  We always assume that $t$ is a closed term of type 0.  

\smallskip

In particular, we define the notion of \emph{computation tree} $T[t]$ which represents an operational evaluation of $[[t]]$. Our definition is  inspired by the operational semantics for PCF as designed by Plotkin in 1977, but since we are modelling infinitary computations, our modifications have to be far-reaching. The main motivation is that we want to recapture the qualities of the computation trees in Kleene's  computability model for calculations with inputs from the $\P$-structure as in Definition~\ref{Dree}. We can then prove the equivalence between the denotational semantics and our operational one.
We use the fact that all terms of type $0$ can be written as an iterated composition $t = t_1 \cdots t_n$ where $t_1$ is not itself a composition.

\begin{definition}\label{definition12}{\em
Let $t$ be a closed term of type $0$ with constants $\Phi_1, \ldots ,  \Phi_n$. We define the  \emph{computation tree} $T[t]$ of $t$, formally consisting of sequences of terms, by considering a closed term of type $0$ as a \emph{computation} and  defining what the immediate sub-computations are. Some terms $t$ will have a value $a \in \N$, while others are undefined. The definition of $T[t]$ is top-down, while the definition of the value is bottom-up. The whole construction can be viewed as a positive inductive definition, in the intended analogy of the set of Kleene computations. 
\begin{enumerate}
\item If $s= a$, where $a$ of type 0 is a constant for itself, then $t$ is a leaf node with $a$ as the value.  The cases $s = \suc\; s_1$, 
 $s = \pd \;s_1$, and $s= \case \;s_1s_2s_3$, are left for the reader.
\item If $s = \Phi s_1 \cdots s_n$ with $n \geq 1$, then $\Phi \in \Pa(\sigma \rightarrow \tau)$ for some $\sigma$ and $\tau$ and  $s_1$ will be of  type $\sigma = \delta_1 , \ldots , \delta_m \rightarrow 0$ for some $\delta_1 , \ldots , \delta_m$.
 \begin{itemize}
  \item[-] For each $(\phi_1 , \ldots , \phi_m) \in \F(\delta_1) \times \cdots \times \F(\delta_m)$ we  let $ s_1\phi_1 \cdots \phi_m$ be an immediate\footnote{Note that if $m = 0$, then $s_1$ is of type 0 and is itself a predecessor. This case is implicitly treated in the general treatment below.
} predecessor.  
  \item[-] Then, if each such  predecessor has a value, these  values will define  a functional $\Psi \in \F(\sigma)$. Then $\xi = \Phi(\Psi) \in \Pa(\tau)$. In the case $\tau \neq 0$, i.e. $n \geq 2$,  the term $\xi s_2 \cdots s_n$ is also a predecessor, and in case it has a value, this will be the value  of $s$.  In the case $\tau = 0$, then $\xi \in \N_\bot$. The value of $s$ will be $\xi$ if $\xi \in \N$, otherwise there will be no value. This will be a leaf in the computation tree.
 \end{itemize}
 \item If $s =( \lambda y .s_1)s_2 \cdots s_n$ we let $(s_1)^y_{s_2}s_3 \cdots s_n$ be the predecessor, and the value will be preserved. %
 \item $s = (\Fix~x .s_1)s_2 \cdots s_n$. Then $ (s_1)^x_{(\Fix~x .s_1)}s_2 \cdots  s_n$ is the only predecessor, and the value will be preserved.

\end{enumerate}
}\end{definition}
It is clear that for a computation tree $T[t]$ to have a value, it must be well-founded, since a value always depends on the values of the predecessors, except for the leaf terms of type $0$, where the value is just given. The converse is not always true, since an application involving partial functionals may yield a well-founded tree without a value.
\bigskip

\begin{theorem}\label{thm5.7} Let $t$ be a closed term of type 0 in $\mathcal L$. Then  the following are equivalent
\begin{enumerate}
\item The interpretation of $t$ satisfies $[[t]] = a$ for some $a \in \N$.
\item  The computation tree $T[t]$ is well-founded with value $a$.
\end{enumerate} \end{theorem}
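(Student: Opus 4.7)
My plan is to split the theorem into the soundness direction (2)$\Rightarrow$(1) and the adequacy direction (1)$\Rightarrow$(2), treating them by very different techniques.

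For soundness, the argument is a transfinite induction on the rank of the well-founded tree $T[t]$. I will verify each of the four clauses of Definition \ref{definition12} in turn. The constants and successor/predecessor/case-distinction clauses are immediate. For $\beta$-reduction, I invoke the preceding substitution lemma to identify $[[(\lambda y.s_1)s_2\cdots s_n]]$ with $[[(s_1)^y_{s_2}s_3\cdots s_n]]$. For $\Fix$, I use that $[[\Fix~x.s_1]]$ is a fixed point of $\lambda \phi.[[s_1]](\phi)$, so $[[(\Fix~x.s_1)s_2\cdots s_n]] = [[(s_1)^x_{\Fix~x.s_1}s_2\cdots s_n]]$. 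The application-of-constant clause is handled by unpacking the definition of $\varepsilon$: the collection of values at all total inputs returned by the induction hypothesis assembles into precisely the total $\Psi \in \F(\sigma)$ to which $\Phi$ is then applied.

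For adequacy, I will use a Plotkin-style logical relation $R_\sigma \subseteq \Mo(\sigma) \times \{\text{closed terms of type } \sigma\}$ defined by recursion on $\sigma$: at type $0$, $R_0(\phi,t)$ holds if $\phi=\bot$, or $\phi=a\in\N$ and $T[t]$ is well-founded with value $a$; at type $\sigma\to\tau$, $R_{\sigma\to\tau}(\Phi,t)$ holds iff for every closed $s$ of type $\sigma$ and every $\psi$ with $R_\sigma(\psi,s)$, $R_\tau(\Phi(\psi),ts)$. I will then prove the \emph{fundamental lemma}: for every term $t$ of type $\sigma$ with free variables $\vec x$ of types $\vec \tau$, and every closed $\vec s$ and $\vec\phi$ with $R_{\tau_i}(\phi_i,s_i)$, one has $R_\sigma([[t]](\vec\phi),t[\vec s/\vec x])$. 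Soundness already gives one easy ingredient: if $T[t]$ has value $a$ and $t\to t'$ is one of our reduction steps, then $T[t']$ has value $a$, which lets $R$ be transported across the operational steps. An \emph{admissibility} sublemma is required for the fixed point clause, namely that $R_\sigma$ is closed under suprema of $\preceq$-chains in the first coordinate; this follows because membership of $a\in\N$ in the value of $T[t]$ is a finite-information condition (hence monotone and continuous in the denotations of the constants appearing in $t$).

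The principal obstacle is the constants case, i.e.\ showing $R_\sigma(\varepsilon_\sigma(\Phi),\Phi)$ for each $\Phi\in\Pa(\sigma)$. After unfolding, I must show: for $\Phi$ of type $\sigma_1\to\cdots\to\sigma_n\to 0$ and closed $s_i$ with $R_{\sigma_i}(\psi_i,s_i)$, if $\varepsilon_\sigma(\Phi)(\psi_1)\cdots(\psi_n)=a\in\N$, then $T[\Phi s_1\cdots s_n]$ is well-founded with value $a$. The recursion defining $\varepsilon$ threads through choices of total $g_i\in\F(\sigma_i)$ with $\varepsilon_{\sigma_i}(g_i)\preceq\psi_i$; I will carry out a parallel induction on the type, showing that for each fixed choice of total tuples $(\phi_1,\dots,\phi_m)$ at each level, the sub-computation $s_1\phi_1\cdots\phi_m$ appearing in clause (2) of Definition \ref{definition12} has a value matching $g_1(\phi_1,\dots,\phi_m)$, because $R_{\sigma_1}(\psi_1,s_1)$ and $\varepsilon_{\sigma_1}(g_1)\preceq \psi_1$ force the corresponding $R_0$-value to be $g_1(\phi_1,\dots,\phi_m)$. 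Assembling these values yields the total $\Psi\in\F(\sigma_1)$ required by the operational rule, with $\Psi=g_1$; iterating through the remaining arguments this way produces a well-founded tree with value $a$. The fixed-point case of the fundamental lemma is then a transfinite induction on ordinal stages $\Fix^\alpha$ using admissibility, and finally (1)$\Rightarrow$(2) follows by applying the fundamental lemma to closed $t$ of type $0$ with empty substitution.
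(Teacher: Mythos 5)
Your proposal is correct and takes essentially the same route as the paper: (2)$\Rightarrow$(1) by induction on the rank of $T[t]$ via the substitution/$\beta$-lemma, and (1)$\Rightarrow$(2) by a Plotkin-style computability argument that approximates $\Fix$ by ordinal stages and handles the constants from $\Pa$ by a sub-induction on type rank, exactly as in the paper's proof. The only difference is bookkeeping: the paper stratifies the fixed points syntactically (auxiliary terms $\Fixa_\alpha$ in an extended language ${\mathcal L}_\kappa$) and works with a unary ``normality'' predicate, whereas you keep the stages semantic and use a binary logical relation together with an admissibility lemma (whose real justification is flatness of $\N_\bot$ at type $0$ and pointwise suprema at higher types, rather than the ``finite-information'' remark you give) --- an equivalent formulation.
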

\begin{proof}
First of all, the proof of $(2) \rightarrow (1)$ is by induction on the ordinal rank of $T[t]$ and organised in cases according to the syntax of the leading term in $t$. We omit the details, just observing that we use the validity of $\beta$-conversion to handle the case where we have $t = \lambda x.s_1 \cdots s_n$.
%In this section we will prove that if $t$ is a closed term of type 0, with $[[t]] = a \in \N$, then $T[t]$ is well founded, and with value $a$. The argument will be an adaption of a proof due to Plotkin \cite{Plo}, and will, as in his proof, use approximations to the least fixed point operators. Notice that we, in the definition of the denotational semantics, only have to specify the values of terms of type 0, since for terms $t$ of type $\tau_1 , \ldots , \tau_m \rightarrow 0$ we can always consider the terms $t\xi_1 \cdots \xi_m$ for arbitrary $\xi_1, \ldots , \xi_m$ of the appropriate types instead.

\smallskip

Secondly, to prove $(1) \rightarrow (2)$ we adapt a proof given in Plotkin \cite{Plo}. In particular, for each ordinal $\alpha$ we introduce the auxiliary terms $\Fixa_{\alpha}~x_i.t$ with the denotational semantics (for the special case $i = 1$) as follows:
\[ 
[[\Fixa_\alpha~x_1.t \xi_1 \cdots \xi_m]](\phi_2, \ldots , \phi_n) = \bigsqcup_{\beta  < \alpha}[[ t^{x_1}_{\Fixa_\beta~x_i1t}\xi_1 , \cdots \xi_m]](\phi_2, \ldots , \phi_n).
\]
We then have for all terms $t$ that
\[ 
[[\Fix~x.t]] = \sqcup_{\alpha \in {\rm On}}[[\Fixa_\alpha~x.t]].
\] 
As a consequence, there will be a cardinal $\kappa$ such that $\Fix$ and $\Fixa_\kappa$ will have the same denotational interpretation.

\smallskip

Now, for each ordinal $\alpha \leq \kappa$, let ${\mathcal L}_\alpha$ be $\mathcal L$, but where we use  $\Fixa_\beta$ for $\beta \leq \alpha$ instead of $\Fix$. 
If $t$ is a term in ${\mathcal L}_\alpha$ for some $\alpha$, then let $t^-$ be the term where all occurrences in $t$ of $\Fixa_\beta$ for some $\beta$ are replaced by $\Fixa$.
We now define the notion of \emph{normality} for all terms in ${\mathcal L}_\kappa$.
\begin{itemize}
\item[(i)] If $t$ is closed and of type 0, then $t$ is \emph{normal} if $[[t]] = \bot$ or $T[t^-]$ is well-founded and with $[[t]]$ as value.
\item[(ii)] If $t$ is closed and of type $\sigma \rightarrow \tau$, then $t$ is normal if $ts$ is normal whenever $s$ is closed and normal.
\item[(iii)] If $t$ has free variables $x_1 , \ldots x_n$, then $t$ is normal if $t^{x_1 , \ldots , x_n}_{s_1 , \ldots, s_n}$ is normal whenever $s_1 , \ldots , s_n$ are closed and normal.
\end{itemize}
We implicitly assume that all typings are correct. Our key claim is now as follows.
\begin{center}  
\emph{All terms in ${\mathcal L}_\kappa$ are normal.}
\end{center}
In order to prove this claim, we use induction on the syntax of $t^-$, and in the cases of $\Fixa_\alpha~x.s$, subinduction on $\alpha$. It suffices to prove for all closed terms $t$ of type $\sigma = \tau_1 , \ldots , \tau_m \rightarrow 0$ that:'
\begin{center}
\emph{if each $s_i$ is closed, normal, and of type $\tau_i$ for $i = 1 , \ldots , m$ and $[[ts_1\cdots s_m]] = a$, then the computation tree of  $ts_1\cdots s_m$ is well-founded with value $a$.}
\end{center}
All cases are easy and most are well-known. In the case when $t$ is a constant $\Phi$ from $\Pa$ we need a sub-induction on the type rank of $\Phi$. We leave the (straightforward) details for the reader.\end{proof}

\begin{ack}\rm 
We thank John Longley for spotting the error in the lambda calculus from \cite{dagsamXIII}.
We thank the anonymous referees for their constructive suggestions. 
We thank Anil Nerode for his valuable advice, especially discussions related to Baire classes.
We thank Dave L.\ Renfro for his efforts in providing a most encyclopedic summary of analysis, to be found online.  
%We also thank the anonymous referee for the helpful suggestions.    
Our research was supported by the \emph{Klaus Tschira Boost Fund} via the grant Projekt KT43.
We express our gratitude towards the latter institution.    
\end{ack}

\begin{bibdiv}
\begin{biblist}
%\bibselect{allkeida}
\bib{sobo}{book}{
  author={Adams, Robert A.},
  author={Fournier, John J. F.},
  title={Sobolev spaces},
  series={Pure and Applied Mathematics},
  volume={140},
  edition={2},
  publisher={Elsevier/Academic Press},
  date={2003},
  pages={xiv+305},
}

\bib{voordedorst}{book}{
  author={Appell, J\"{u}rgen},
  author={Bana\'{s}, J\'{o}zef},
  author={Merentes, Nelson},
  title={Bounded variation and around},
  series={De Gruyter Series in Nonlinear Analysis and Applications},
  volume={17},
  publisher={De Gruyter, Berlin},
  date={2014},
  pages={x+476},
}

\bib{ascoli1}{article}{
  author={Ascoli, Guilio},
  title={Sul concetto di integrale definito},
  year={1875},
  journal={Atti della Accademia Reale dei Lincei. Rendiconti. Classe di Scienze Fisiche, Matematiche e Naturali},
  number={2},
  volume={2},
  pages={862-872},
}

\bib{avi2}{article}{
  author={Avigad, Jeremy},
  author={Feferman, Solomon},
  title={G\"odel's functional \(``Dialectica''\) interpretation},
  conference={ title={Handbook of proof theory}, },
  book={ series={Stud. Logic Found. Math.}, volume={137}, },
  date={1998},
  pages={337--405},
}

\bib{beren2}{article}{
  author={Baire, Ren\'{e}},
  title={Sur les fonctions de variables r\'eelles},
  journal={Ann. di Mat.},
  date={1899},
  pages={1--123},
  volume={3},
  number={3},
}

\bib{beren}{book}{
  author={Baire, Ren\'{e}},
  title={Le\c {c}ons sur les fonctions discontinues},
  language={French},
  series={Les Grands Classiques Gauthier-Villars},
  note={Reprint of the 1905 original},
  publisher={\'{E}ditions Jacques Gabay, Sceaux},
  date={1995},
  pages={viii+65},
}

\bib{bish1}{book}{
  author={Bishop, Errett},
  title={Foundations of constructive analysis},
  publisher={McGraw-Hill},
  date={1967},
  pages={xiii+370},
}

\bib{opborrelen5}{book}{
  author={Borel, \'Emile},
  author={Drach, Jules},
  title={Introduction a l'\'Etude de la Th\'{e}orie des Nombres et de l'Alg\`ebre Sup\'erieure},
  publisher={Libraire Nony, Paris},
  year={1895},
  pages={pp.\ 380},
}

\bib{opborrelen3}{article}{
  author={Borel, \'Emile},
  title={Contribution \`a l'analyse arithm\'etique du continu},
  journal={Journal de Math\'ematiques Pures et Appliqu\'ees (Serie 5)},
  volume={9},
  date={1903},
  pages={329--375},
}

\bib{opborrelen4}{article}{
  author={Borel, \'Emile},
  title={Sur l'approximation les uns par les autres des nombres formant un ensemble d\'enombrable},
  journal={Comptes rendus de l'Acad\'emie des Sciences, Paris, Gauthier-Villars},
  volume={136},
  number={5},
  date={1903},
  pages={297--299},
}

\bib{quasibor2}{article}{
  author={Bors\'{\i }k, J\'{a}n},
  title={Sums of quasicontinuous functions defined on pseudometrizable spaces},
  journal={Real Anal. Exchange},
  volume={22},
  date={1996/97},
  number={1},
  pages={328--337},
}

\bib{briva}{article}{
  author={Bridges, Douglas},
  title={A constructive look at functions of bounded variation},
  journal={Bull. London Math. Soc.},
  volume={32},
  date={2000},
  number={3},
  pages={316--324},
}

\bib{brima}{article}{
  author={Bridges, Douglas},
  author={Mahalanobis, Ayan},
  title={Bounded variation implies regulated: a constructive proof},
  journal={J. Symbolic Logic},
  volume={66},
  date={2001},
  number={4},
  pages={1695--1700},
}

\bib{broeker}{book}{
  author={Bruckner, Andrew M.},
  title={Differentiation of real functions},
  series={Lecture Notes in Mathematics},
  volume={659},
  publisher={Springer},
  date={1978},
  pages={x+247},
}

\bib{boekskeopendoen}{book}{
  author={Buchholz, Wilfried},
  author={Feferman, Solomon},
  author={Pohlers, Wolfram},
  author={Sieg, Wilfried},
  title={Iterated inductive definitions and subsystems of analysis},
  series={LNM 897},
  publisher={Springer},
  date={1981},
  pages={v+383},
}

\bib{darb}{article}{
  author={Darboux, Gaston},
  title={M\'emoire sur les fonctions discontinues},
  journal={Annales scientifiques de l'\'Ecole Normale Sup\'erieure},
  pages={57--112},
  publisher={Elsevier},
  volume={2e s{\'e}rie, 4},
  year={1875},
}

\bib{didicol}{book}{
  author={Dedekind, Richard},
  title={Gesammelte mathematische Werke. B\"{a}nde II},
  language={German},
  series={Herausgegeben von Robert Fricke, Emmy Noether und \"{o}ystein Ore},
  publisher={Chelsea Publishing Co., New York},
  date={1968},
  pages={Vol. I: iii+397 pp. (1 plate); Vol. II: iv+442 pp.; Vol. III: iii+223--508 pp. (Vols. II and III bound as one)},
}

\bib{dill}{article}{
  author={Dilworth, Robert P.},
  title={The normal completion of the lattice of continuous functions},
  journal={Trans. Amer. Math. Soc.},
  volume={68},
  date={1950},
  pages={427--438},
}

\bib{didi3}{book}{
  author={Dirichlet, Lejeune P.\ G.},
  title={\"Uber die Darstellung ganz willk\"urlicher Funktionen durch Sinus- und Cosinusreihen},
  year={1837},
  publisher={Repertorium der physik, von H.W. Dove und L. Moser, bd. 1},
}

\bib{dowker1}{article}{
  author={Dowker, Clifford H.},
  title={On countably paracompact spaces},
  journal={Canad. J. Math.},
  volume={3},
  date={1951},
  pages={219--224},
}

\bib{oozeivar}{article}{
  author={Ekeland, Ivar},
  title={On the variational principle},
  journal={J. Math. Anal. Appl.},
  volume={47},
  date={1974},
  pages={324--353},
}

\bib{ellis}{article}{
  author={Ellis, Hubert W.},
  title={Darboux properties and applications to non-absolutely convergent integrals},
  journal={Canad. J. Math.},
  volume={3},
  date={1951},
  pages={471--485},
}

\bib{littlefef}{book}{
  author={Feferman, Solomon},
  title={How a Little Bit goes a Long Way: Predicative Foundations of Analysis},
  year={2013},
  note={Unpublished notes from 1977-1981 with updated introduction, \url {https://math.stanford.edu/~feferman/papers/pfa(1).pdf}},
}

\bib{ekelhaft}{article}{
  author={{Fern{\'a}ndez-Duque}, David},
  author={Shafer, Paul},
  author={Yokoyama, Keita},
  title={Ekeland's variational principle in weak and strong systems of arithmetic},
  journal={Selecta Mathematics},
  year={2020},
  pages={26:68},
}

\bib{ekelhaft2}{article}{
  author={Fern\'{a}ndez-Duque, David},
  author={Shafer, Paul},
  author={Towsner, Henry},
  author={Yokoyama, Keita},
  title={Metric fixed point theory and partial impredicativity},
  journal={Philos. Trans. Roy. Soc. A},
  volume={381},
  date={2023},
}

\bib{flinkenstein}{article}{
  author={Finkelstein, Mark},
  author={Whitley, Robert},
  title={Integrals of continuous functions},
  journal={Pacific J. Math.},
  volume={67},
  date={1976},
  number={2},
  pages={365--372},
}

\bib{frink}{article}{
  title={Jordan Measure and Riemann Integration},
  author={Frink, Orrin Jr},
  journal={Annals of Mathematics},
  year={1933},
  volume={34},
  number={3},
  page={518--526},
}

\bib{withgusto}{article}{
  author={Giusto, Mariagnese},
  author={Simpson, Stephen G.},
  title={Located sets and reverse mathematics},
  journal={J. Symbolic Logic},
  volume={65},
  date={2000},
  number={3},
  pages={1451--1480},
}

\bib{good}{article}{
  author={Good, Chris},
  author={Stares, Ian},
  title={New proofs of classical insertion theorems},
  journal={Comment. Math. Univ. Carolin.},
  volume={41},
  date={2000},
  number={1},
}

\bib{groeneberg}{article}{
  title={Highness properties close to PA-completeness},
  author={Noam Greenberg},
  author={Joseph S. Miller},
  author={Andr\'e Nies},
  year={2019},
  journal={To appear in Israel Journal of Mathematics},
}

\bib{grilling}{article}{
  author={Grilliot, Thomas J.},
  title={On effectively discontinuous type-$2$ objects},
  journal={J. Symbolic Logic},
  volume={36},
  date={1971},
  pages={245--248},
}

\bib{hahn1}{article}{
  author={Hahn, Hans},
  title={\"{U}ber halbstetige und unstetige Funktionen},
  journal={Sitzungsberichte Akad. Wiss. Wien Abt. IIa},
  volume={126},
  date={1917},
  pages={91-110},
}

\bib{hankelwoot}{book}{
  author={Hermann {Hankel}},
  title={{Untersuchungen \"uber die unendlich oft oscillirenden und unstetigen Functionen.}},
  pages={pp.\ 51},
  year={1870},
  publisher={Ludwig Friedrich Fues, Memoir presented at the University of T\"ubingen},
}

\bib{hankelijkheid}{book}{
  author={Hermann {Hankel}},
  title={{Untersuchungen \"uber die unendlich oft oscillirenden und unstetigen Functionen.}},
  volume={20},
  pages={63--112},
  year={1882},
  publisher={Math. Ann., Springer},
}

\bib{hartleycountable}{article}{
  author={Hartley, John P.},
  title={The countably based functionals},
  journal={J. Symbolic Logic},
  volume={48},
  date={1983},
  number={2},
  pages={458--474},
}

\bib{hartjeS}{article}{
  author={Hartley, John P.},
  title={Effective discontinuity and a characterisation of the superjump},
  journal={J. Symbolic Logic},
  volume={50},
  date={1985},
  number={2},
  pages={349--358},
}

\bib{hauzen}{article}{
  author={Hausdorff, Felix},
  title={\"{U}ber halbstetige Funktionen und deren Verallgemeinerung},
  journal={Math. Z.},
  volume={5},
  date={1919},
  number={3-4},
  pages={292--309},
}

\bib{baathetniet}{article}{
  author={Heyting, Arend},
  title={Recent progress in intuitionistic analysis},
  conference={ title={Intuitionism and Proof Theory}, address={Proc. Conf., Buffalo, N.Y.}, date={1968}, },
  book={ publisher={North-Holland, Amsterdam}, },
  date={1970},
  pages={95--100},
}

\bib{hillebilly2}{book}{
  author={Hilbert, David},
  author={Bernays, Paul},
  title={Grundlagen der Mathematik. II},
  series={Zweite Auflage. Die Grundlehren der mathematischen Wissenschaften, Band 50},
  publisher={Springer},
  date={1970},
}

\bib{JR}{article}{
  author={Jayne, John E.},
  author={Rogers, C. Ambrose},
  title={First level Borel functions and isomorphisms},
  journal={J. Math. Pures Appl. (9)},
  volume={61},
  date={1982},
  number={2},
  pages={177--205},
}

\bib{jordel}{article}{
  author={Jordan, Camillie},
  title={Sur la s\'erie de Fourier},
  journal={Comptes rendus de l'Acad\'emie des Sciences, Paris, Gauthier-Villars},
  volume={92},
  date={1881},
  pages={228--230},
}

\bib{kate}{article}{
  author={Kat\v {e}tov, Miroslav},
  title={On real-valued functions in topological spaces},
  journal={Fund. Math.},
  volume={38},
  date={1951},
  pages={85--91},
}

\bib{kerkje}{article}{
  author={Kirchheim, Bernd},
  title={Baire one star functions},
  journal={Real Anal. Exchange},
  volume={18},
  date={1992/93},
  number={2},
  pages={385--399},
}

\bib{kleeneS1S9}{article}{
  author={Kleene, Stephen C.},
  title={Recursive functionals and quantifiers of finite types. I},
  journal={Trans. Amer. Math. Soc.},
  volume={91},
  date={1959},
  pages={1--52},
}

\bib{kohlenbach4}{article}{
  author={Kohlenbach, Ulrich},
  title={Foundational and mathematical uses of higher types},
  conference={ title={Reflections on the foundations of mathematics}, },
  book={ series={Lect. Notes Log.}, volume={15}, publisher={ASL}, },
  date={2002},
  pages={92--116},
}

\bib{kohlenbach2}{article}{
  author={Kohlenbach, Ulrich},
  title={higher-order reverse mathematics},
  conference={ title={Reverse mathematics 2001}, },
  book={ series={Lect. Notes Log.}, volume={21}, publisher={ASL}, },
  date={2005},
  pages={281--295},
}

\bib{kowalski}{article}{
  author={Kowalewski, Marcin},
  author={Maliszewski, Aleksander},
  title={Separating sets by cliquish functions},
  journal={Topology Appl.},
  volume={191},
  date={2015},
  pages={10--15},
}

\bib{kreupel}{article}{
  author={Kreuzer, Alexander P.},
  title={Bounded variation and the strength of Helly's selection theorem},
  journal={Log. Methods Comput. Sci.},
  volume={10},
  date={2014},
  number={4},
  pages={4:16, 15},
}

\bib{laktose}{book}{
  author={Lakatos, Imre},
  title={Proofs and refutations},
  series={Cambridge Philosophy Classics},
  publisher={Cambridge University Press},
  date={2015},
  pages={xii+183},
}

\bib{longmann}{book}{
  author={Longley, John},
  author={Normann, Dag},
  title={Higher-order Computability},
  year={2015},
  publisher={Springer},
  series={Theory and Applications of Computability},
}

\bib{louveau1}{article}{
  author={Louveau, Alain},
  title={A separation theorem for $\Sigma \sp {1}\sb {1}$ sets},
  journal={Trans. Amer. Math. Soc.},
  volume={260},
  date={1980},
  number={2},
  pages={363--378},
}

\bib{malina}{article}{
  author={Maliszewski, Aleksander},
  title={Separating sets by Darboux-like functions},
  journal={Fund. Math.},
  volume={175},
  date={2002},
  number={3},
  pages={271--283},
}

\bib{mentoch}{article}{
  author={Menkyna, Robert},
  title={On representations of Baire one functions as the sum of lower and upper semicontinuous functions},
  journal={Real Anal. Exchange},
  volume={38},
  date={2012/13},
  number={1},
  pages={169--175},
}

\bib{michael1}{article}{
  author={Michael, Ernest},
  title={Continuous selections. I},
  journal={Ann. of Math. (2)},
  volume={63},
  date={1956},
  pages={361--382},
}

\bib{Mol}{book}{
author = {Moldestad, Johan}
title = {Computations in higher Types}
year = {1977}
publisher = {Springer}
series = {Lecture notes in Mathematics}
number = {574}
}

\bib{moorethanudeserve}{article}{
  author={Moore, Gregory H.},
  title={The emergence of open sets, closed sets, and limit points in analysis and topology},
  journal={Historia Math.},
  volume={35},
  date={2008},
  number={3},
  pages={220--241},
}

\bib{myerson}{article}{
  author={Myerson, Gerald I.},
  title={First-class functions},
  journal={Amer. Math. Monthly},
  volume={98},
  date={1991},
  number={3},
  pages={237--240},
}

\bib{nieyo}{article}{
  title={The reverse mathematics of theorems of Jordan and Lebesgue},
  journal={The Journal of Symbolic Logic},
  publisher={Cambridge University Press},
  author={Nies, Andr\'e},
  author={Triplett, Marcus A.},
  author={Yokoyama, Keita},
  year={2021},
  pages={1--18},
}

\bib{nomu}{article}{
  author={Normann, Dag},
  title={Measure-theoretic uniformity and the Suslin functional},
  journal={Computability},
  volume={10},
  date={2021},
  number={2},
  pages={91--105},
}

\bib{dagsamV}{article}{
  author={Normann, Dag},
  author={Sanders, Sam},
  title={Pincherle's theorem in reverse mathematics and computability theory},
  journal={Ann. Pure Appl. Logic},
  volume={171},
  date={2020},
  number={5},
  pages={102788, 41},
}

\bib{dagsamVII}{article}{
  author={Normann, Dag},
  author={Sanders, Sam},
  title={Open sets in Reverse Mathematics and Computability Theory},
  journal={Journal of Logic and Computation},
  volume={30},
  number={8},
  date={2020},
  pages={pp.\ 40},
}

\bib{dagsamIX}{article}{
  author={Normann, Dag},
  author={Sanders, Sam},
  title={The Axiom of Choice in Computability Theory and Reverse Mathematics},
  journal={Journal of Logic and Computation},
  volume={31},
  date={2021},
  number={1},
  pages={297-325},
}

\bib{dagsamXI}{article}{
  author={Normann, Dag},
  author={Sanders, Sam},
  title={On robust theorems due to Bolzano, Jordan, Weierstrass, and Cantor in Reverse Mathematics},
  journal={Journal of Symbolic Logic, doi: \url {doi.org/10.1017/jsl.2022.71}},
  pages={pp.\ 51},
  date={2022},
}

\bib{dagsamX}{article}{
  author={Normann, Dag},
  author={Sanders, Sam},
  title={On the uncountability of $\mathbb {R}$},
  journal={Journal of Symbolic Logic, doi: \url {doi.org/10.1017/jsl.2022.27}},
  pages={pp.\ 43},
  date={2022},
}

\bib{dagsamXII}{article}{
  author={Normann, Dag},
  author={Sanders, Sam},
  title={Betwixt Turing and Kleene},
  journal={LNCS 13137, proceedings of LFCS22},
  pages={pp.\ 18},
  date={2022},
}

\bib{dagsamXIII}{article}{
  author={Normann, Dag},
  author={Sanders, Sam},
  title={On the computational properties of basic mathematical notions},
  journal={Journal of Logic and Computation, doi: \url {doi.org/10.1093/logcom/exac075}},
  pages={pp.\ 44},
  date={2022},
}

\bib{dagsamXIV}{article}{
  author={Normann, Dag},
  author={Sanders, Sam},
  title={The Biggest Five of Reverse Mathematics},
  journal={Journal for Mathematical Logic, doi: \url {https://doi.org/10.1142/S0219061324500077}},
  pages={pp.\ 56},
  date={2023},
}

\bib{dagtait}{article}{
  author={Normann, Dag},
  author={Tait, William},
  title={On the computability of the fan functional},
  conference={ title={Feferman on foundations}, },
  book={ series={Outst. Contrib. Log.}, volume={13}, publisher={Springer}, },
  date={2017},
  pages={57--69},
}

\bib{varijo}{article}{
  author={Richman, Fred},
  title={Omniscience principles and functions of bounded variation},
  journal={Mathematical Logic Quarterly},
  volume={48},
  date={2002},
  pages={111--116},
}

\bib{Pla}{thesis}{
author={Platek, Richard}
title={Foundations of recursion theory}
institution={Stanford university}
year={1966}

}

\bib{Plo}{article}{
author = {Plotkin, Gordon}
title = { PCF considered as a programming language} 
journal = {Theoretical Computer Science}
volume = {5(3)}
date = {1977}
pages = {223--255}

}

\bib{rieal}{book}{
  author={Riemann (auth.), Bernhard},
  author={Roger Clive Baker and Charles O.\ Christenson and Henry Orde (trans.)},
  title={Bernhard Riemann: collected works},
  publisher={Kendrick Press},
  year={2004},
  pages={555},
}

\bib{SVR}{book}{
  author={van Rooij, Arnoud C. M.},
  author={Schikhof, Wilheminus H.},
  title={A second course on real functions},
  publisher={Cambridge University Press},
  date={1982},
  pages={xiii+200},
}

\bib{Sacks.high}{book}{
  author={Sacks, Gerald E.},
  title={Higher recursion theory},
  series={Perspectives in Mathematical Logic},
  publisher={Springer},
  date={1990},
  pages={xvi+344},
}

\bib{samcsl23}{article}{
  author={Sanders, Sam},
  title={On the computational properties of the Baire category theorem},
  year={2022},
  journal={Submitted, arxiv: \url {https://arxiv.org/abs/2210.05251}},
}

\bib{samBIG}{article}{
  author={Sanders, Sam},
  title={Big in Reverse Mathematics: the uncountability of the real numbers},
  year={2023},
  journal={Journal of Symbolic Logic, doi:\url {https://doi.org/10.1017/jsl.2023.42}},
  pages={pp.\ 26},
}

\bib{samBIG2}{article}{
  author={Sanders, Sam},
  title={Big in Reverse Mathematics: measure and category},
  year={2023},
  journal={Journal of Symbolic Logic, doi: \url {https://doi.org/10.1017/jsl.2023.65}},
  pages={pp.\ 44},
}

\bib{simpson2}{book}{
  author={Simpson, Stephen G.},
  title={Subsystems of second order arithmetic},
  series={Perspectives in Logic},
  edition={2},
  publisher={CUP},
  date={2009},
  pages={xvi+444},
}

\bib{zweer}{book}{
  author={Soare, Robert I.},
  title={Recursively enumerable sets and degrees},
  series={Perspectives in Mathematical Logic},
  publisher={Springer},
  date={1987},
  pages={xviii+437},
}

\bib{szcz}{article}{
  author={Szczuka, Paulina},
  title={Generalizations of Urysohn's lemma for some subclasses of Darboux functions},
  journal={Topology Appl.},
  volume={206},
  date={2016},
  pages={46--57},
}

\bib{taomes}{book}{
  author={Tao, Terence},
  title={An introduction to measure theory},
  series={Graduate Studies in Mathematics},
  volume={126},
  publisher={American Mathematical Society, Providence, RI},
  date={2011},
  pages={xvi+206},
}

\bib{tietze}{article}{
  author={Tietze, Heinrich},
  title={\"{U}ber Funktionen, die auf einer abgeschlossenen Menge stetig sind},
  journal={J. Reine Angew. Math.},
  volume={145},
  date={1915},
  pages={9--14},
}

\bib{tong}{article}{
  author={Tong, Hing},
  title={Some characterizations of normal and perfectly normal spaces},
  journal={Duke Math. J.},
  volume={19},
  date={1952},
  pages={289--292},
}

\bib{tur37}{article}{
  author={Turing, Alan},
  title={On computable numbers, with an application to the Entscheidungs-problem},
  year={1936},
  journal={Proceedings of the London Mathematical Society},
  volume={42},
  pages={230-265},
}

\bib{volaarde2}{article}{
  author={Volterra, Vito},
  title={Alcune osservasioni sulle funzioni punteggiate discontinue},
  journal={Giornale di matematiche},
  volume={XIX},
  date={1881},
  pages={76-86},
}

\bib{twiertrots}{article}{
  author={Weihrauch, Klaus},
  author={Kreitz, Christoph},
  title={Representations of the real numbers and of the open subsets of the set of real numbers},
  journal={Ann. Pure Appl. Logic},
  volume={35},
  date={1987},
  number={3},
  pages={247--260},
}

\bib{wierook}{book}{
  author={Weihrauch, Klaus},
  title={Computable analysis},
  note={An introduction},
  publisher={Springer-Verlag, Berlin},
  date={2000},
  pages={x+285},
}

\bib{weverketoch}{article}{
  author={Weaver, Nik},
  title={Axiomatizing mathematical conceptualism in third order arithmetic},
  journal={arXiv},
  date={2009},
  note={\url {https://arxiv.org/abs/0905.1675}},
}

\bib{verzengend}{article}{
  author={Zheng, Xizhong},
  author={Rettinger, Robert},
  title={Effective Jordan decomposition},
  journal={Theory Comput. Syst.},
  volume={38},
  date={2005},
  number={2},
  pages={189--209},
}

\end{biblist}
\end{bibdiv}

\bye